\setlist[enumerate]{leftmargin=.5in}
\setlist[itemize]{leftmargin=.5in}
\numberwithin{equation}{section}
\theoremstyle{plain}
\newtheorem{thm}{\protect\theoremname}[section]
  \theoremstyle{definition}
  \newtheorem{defn}[thm]{\protect\definitionname}
  \theoremstyle{definition}
  \theoremstyle{plain}
 \newtheorem{lem}[thm]{\protect\lemmaname}
  \theoremstyle{plain}
  \newtheorem{assumption}[thm]{\protect\assumptionname}
  \theoremstyle{remark}
  \newtheorem{rem}[thm]{\protect\remarkname}
  \theoremstyle{plain}
  \newtheorem{cor}[thm]{\protect\corollaryname}
  \theoremstyle{plain}
  \newtheorem{prop}[thm]{\protect\propositionname}
  \providecommand{\assumptionname}{Assumption}
  \providecommand{\conditionname}{Condition}
  \providecommand{\corollaryname}{Corollary}
  \providecommand{\definitionname}{Definition}
  \providecommand{\lemmaname}{Lemma}
  \providecommand{\propositionname}{Proposition}
  \providecommand{\remarkname}{Remark}
 \providecommand{\theoremname}{Theorem}
\def\P{\mathbb P}
\def\R{\mathbb R}
\def\bfX{\mathbf X}
\def\bfZ{\mathbf Z}
\def\bfw{\mathbf w}
\def\bfx{\mathbf x}
\def\bfz{\mathbf z}
\def\bfu{\mathbf u}
\def\L{\mathcal L_\rho}
\def\U{\mathcal U_\rho}
\newcommand\comment[1]{}
\begin{document}


%
%

\title{Distributionally Robust Inference for Extreme Value-at-Risk\thanks{SS and DC were partially funded by the NSF grants DMS-1830293 and DMS-1243102, respectively.}}
\date{January 20, 2020}
\author{Robert Yuen\thanks{Seattle, WA 
  ({\tt bobbyyuen@gmail.com}).}
\and Stilian Stoev\thanks{{\bf Corresponding author.} Department of Statistics, The University of Michigan, Ann Arbor, MI 
  ({\tt sstoev@umich.edu}).}
\and Dan Cooley\thanks{Colorado State University, Fort Collins, CO ({\tt cooleyd@stat.colostate.edu}).}}

%
\maketitle

\begin{abstract}
Under general multivariate regular variation conditions, the extreme Value-at-Risk of a portfolio can be expressed as an integral of a known kernel with respect to a generally 
unknown spectral measure supported on the unit simplex. The estimation of the spectral measure is challenging in practice and  virtually impossible in high dimensions.  
This motivates the problem studied in this work, which is to find universal lower and upper bounds of the extreme Value-at-Risk under practically estimable constraints.  That 
is, we study the infimum and supremum of the extreme Value-at-Risk functional, over the infinite dimensional space of all possible spectral measures that meet a finite set of 
constraints. We focus on extremal coefficient constraints, which  are popular and easy to interpret in practice.  Our contributions are twofold.  First, we show that 
optimization problems over an infinite dimensional space of spectral measures are in fact dual problems to linear semi-infinite programs (LSIPs)
-- linear optimization problems in Euclidean space with an uncountable set of linear constraints.  
 This allows us to prove that the optimal solutions are in fact attained by discrete spectral measures supported on finitely many atoms.   Second, in the case of balanced 
 portfolia, we establish further structural results for the lower bounds as well as closed form solutions for both the lower- and upper-bounds of extreme Value-at-Risk in the 
 special case of a single extremal coefficient constraint.  The solutions unveil important connections to the Tawn-Molchanov max-stable models.  The results are illustrated
  with two applications: a real data example and closed-form formulae in a market plus sectors framework.\\
  
  {\em Keywords:} value-at-risk, extreme value-at-risk, distributionally robust, regular variation, Tawn-Molchanov, linear semi-infinite programming, extremal coefficients.
\end{abstract}

%
%



\comment{
\begin{abstract}
In order to remain solvent through catastrophic times, large financial institutions
must reserve enough capital to pay for losses that are exceeded only on 
extremely rare occasion.  Naturally, the most extreme losses occur when multiple
catastrophes happen simultaneously within a portfolio of risks.  Hence, characterizing the
tail dependence structure of catastrophic risks is paramount. Despite
a recent surge of research in this area, inferring tail dependence structures given multivariate
observations remains a  daunting task.  The challenge is two-fold.  
First, the family of models allowing for non-trivial
tail dependence requires working with the spectral measure, an infinite dimensional parameter
that is difficult to infer and in many cases intractable.  Second, minimum capital requirements 
formulated by regulatory bodies often stipulate conditions which imply scenarios that have 
never been observed or have occurred so sparingly that statistical inference based on such 
sparse observations is not practical.  In practice, companies are tasked with characterizing
the range of plausible risk based on making a small number of defensible assumptions.  More
formally, this implies solving optimization problems over an infinite dimensional space of measures 
subject to a few constraints.  In this work, we formulate solutions for these problems and
provide exact bounds on extreme risk functionals given partial dependence assumptions.  
Using the example of value-at-risk, we show that the theoretical range of extreme value-at-risk
is significantly reduced when relatively few, low dimensional partial dependence assumptions
or constraints are imposed.
\end{abstract}}



\section{Introduction}

Value-at-Risk (VaR) is one of the predominant risk measures used in
determining minimum capital requirements placed upon financial institutions
in order to cover potential losses in the market. In essence, VaR
is the largest loss having a `reasonable chance' of occurring through
the placement of a risky bet. Formally, if a random variable $X$
represents a loss (negative return) on an asset after a fixed holding
period, and $q\in(0,1)$ is a probability
representing `reasonable chance', we have the following definition
\begin{defn}
The Value-at-Risk of a random variable $X$ at the
level $q\in(0,1),$ denoted $\mathrm{VaR}_{q}(X)$ is defined
as 
\[
\mathrm{VaR}_{q}(X):=\inf\{x\in\mathbb{R}\, :\, \mathbb{P}(X\le x)\ge q\}.
\]
That is, $\mathrm{VaR}_q(X)$ is the (generalized) $100\times q$-th percentile of the loss distribution.
\end{defn}
In practice, financial institutions deal with a multi-dimensional portfolio of statistically
dependent losses $\boldsymbol{X}=(X_{1},X_{2},\ldots,X_{d})^{\top}\in\mathbb{R}^{d}$.
In this case capital requirements should be determined by the value-at-risk
for the sum of losses $\mathrm{VaR}_{q}(S)$, where $S:=X_{1}+ X_{2}+\cdots+ X_{d}$.
In these scenarios it is essential to account for tail dependence
in the components of $\boldsymbol{X}$, see e.g., Embrechts et al \cite{embrechts:lambrigger:wuthrich:2009}. 
Furthermore, regulatory guidelines such as Basel III \cite{baselII} typically
prescribe $q\ge.99$. Hence, the scenario of {\em extreme losses} where $q$ is close
to the value $1$ is of great interest.
Specifically, one is interested in {\em extreme VaR}. Namely, fix a reference asset $X_1$.  Mild multivariate
regular variation conditions on the distribution of $\boldsymbol{X}$, imply the existence of the limit:
\begin{equation}\label{e:extreme-VaR-intro}
 {\cal X} \equiv {\cal X}_{(S,X_1)} := \lim_{q\nearrow 1} {{\rm VaR}_q(S)\over {\rm VaR}_q(X_1) }.
\end{equation}

Following the seminal works of \cite{barbe:fougeres:genest:2006} and \cite{embrechts:lambrigger:wuthrich:2009}, we shall refer to 
the limit ratio ${\cal X}_{(S,X_1)}$ as to {\em extreme VaR}.  It is desirable to be able to bound the extreme VaR coefficient 
${\cal X}$ since it provides the first order approximation of value-at-risk:
$$
{\rm VaR}_q(S) \approx  {\cal X}_{(S,X_1)} \times {\rm VaR}_q(X_1),\ \ \mbox{ for }q\approx 1.
$$

The general goal of this paper is to determine lower- and upper-bounds for extreme VaR under natural constraints
on the portfolio.  This should be contrasted with the statistical problems of estimation of VaR or extreme VaR.  Here, we would like to 
understand and characterize the best- and worst-case scenaria for extreme VaR among all possible models for the joint
(asymptotic) dependence of the losses subject to certain classes of constraints.  In this sense, the type of problem we study
is a constrained and extremal version of the so-called Fr\'echet optimization problems investigated in \cite{ruschendorf:1993} and
recently in \cite{puccetti:ruschendorf:manko:2016,ruschendorf:2017}.

Our motivation stems from potential insolvency in insurance and financial sectors due to catastrophic loss.
In this setting, data on extreme portfolia losses are scarce or non-existent.  Thus, conventional statistical estimation methods are either
difficult to justify or in fact inappropriate for the estimation of extreme VaR.  At the same time, adopting a {\em specific} parsimonious model amounts to imposing (explicitly 
or implicitly) constraints on the asymptotic dependence of the assets.  This can lead to significantly under- or over-estimating the portfolio risk.  Such 
types of challenges motivate us to adopt an alternative perspective of {\em distributionally robust} inference.  That is, we provide upper- and lower-bounds
valid under {\em all possible} extremal dependence scenarios.  Our framework allows the practitioners to incorporate either quantitative constraints on 
easy-to-estimate extremal dependence coefficients or qualitative/structural information such as (partial) extremal independence of the portfolio.

 Value-at-Risk has been studied extensively in the literature. Important theoretical aspects such as the in-coherence of VaR 
 \cite{artzner:delbaen:eber:heath:1999} and its elicitability \cite{ziegel:2016}, for example, are well-understood. At the same time, advanced statistical 
 methodology for the estimation of VaR has been developed accounting for both complex temporal dependence and heavy-tailed marginal distribution of 
 the losses (see e.g., the monograph \cite{mcneil:frey:embrechts:2005}).  Advanced methods for the statistically robust estimation of VaR
 \cite{dupuis:papageorgiou:remillard:2014} exist.  The notion of robust statistical inference should be distinguished from our use of the term 
 distributionally robust inference.  In the former, robustness refers to resilience to outliers in the data within a specified model, in the latter,
 distributionally robust context, the goal is to guard against mis-specifications of the model.  While this perspective has been very popular and
 actively studied in the optimization community (see e.g., \cite{bertsimas:brown:caramanis:2011} and the references therein), only a handful of studies 
 adopt this philosophy in the context of risk measures (see e.g.\ \cite{lam:mottet:2017, engelke:ivanovs:2017, blanchet:chen:zhou:2018, das:dhara:natarjan:2018}). 
 To the best of our knowledge, our work is the first to address the general context of extreme VaR for a multi-dimensional portfolio under extremal coefficient 
 constraints.  To be able to describe our contribution, in the  following Section \ref{sec:intro-notation}, we review some important concepts and notation.
  A summary of our results is given in  Section \ref{sec:results}.
  
 \subsection{Notation and preliminaries}\label{sec:intro-notation}
 
$\bullet$ {\em Regular variation.}  Recall that a random vector $\boldsymbol{X} = (X_i)_{i=1}^d$ is said to be 
multivariate regularly varying (RV), if there exists a non-zero Borel measure $\mu$ on $\mathbb R^d\setminus \{\mathbf 0\}$ 
and a sequence $a_n\nearrow \infty$, such that
\begin{equation}\label{e:d:MRV-intro}
 n\P(a_n^{-1}\boldsymbol{X} \in A) {\longrightarrow} \mu(A),\ \ \mbox{ as }n\to\infty, 
\end{equation}
for all $\mu$-continuity sets $A$ bounded away from the origin. 

The measure $\mu$ in \eqref{e:d:MRV-intro} necessarily satisfies the {\em scaling property}:
\begin{equation}\label{e:d:MRV-intro-scaling}
\mu(c A) = c^{-1/\xi} \mu(A),\ \ \forall c>0,
\end{equation}
for some fixed positive constant $\xi$.  We shall write $\bfX \in RV_{1/\xi}(\{a_n\},\mu)$ and refer to $\xi$ as the index of regular 
variation of the portfolio $\bfX$.
It also follows that the normalization sequence $\{a_{n}\}$ is regularly varying with index $\xi$, i.e., for all $t>0$, we have 
$a_{[t n]}/a_n \to t^{\xi},\ n\to\infty$.  The index $\xi$ does not depend on the choice of the normalization sequence $\{a_n\}$,
and the measure $\mu$ is also essentially unique up to a positive multiplicative constant.   For more details, see 
the Appendix \ref{sec:MRV_EVT} below and the monograph \cite{resnick:2007}.

The scaling relation \eqref{e:d:MRV-intro-scaling} entails that $\mu$ can be conveniently factorized in polar coordinates:
$$
\mu(d\bfx) = r^{-(1+1/\xi)} dr \sigma(d\bfu),
$$
where $r:= \|\bfx\|$ and $\bfu:= \bfx/\|\bfx\|$ are the radial and angular components of 
$\bfx \in\mathbb R^d\setminus \{\bf 0\}$, relative to any (fixed) norm $\|\cdot\|$ in $\mathbb R^d$.
Here, $\sigma$ is a finite positive measure on the unit sphere $\mathbb S:= \{ \bfx \, :\, \|\bfx\|=1\}$,
referred to as a spectral measure of the vector $\bfX$. It is unique up to rescaling by a positive multiplicative factor.

For simplicity, we shall focus here on the case of non-negative losses, i.e., when $\bfX$ takes values in the orthant $[0,\infty)^d$, use the $\ell_1$-norm
$$
 \|\bfx\|:= \sum_{i=1}^d |x_i|,
$$ 
and adopt the following. 

\begin{assumption}\label{cond:RV} Suppose that $\boldsymbol{X} \in RV_{1/\xi}(\{a_n\},\mu)$, where 
the measure $\mu$ is not entirely supported on the hyper-planes $\{ \bfx=(x_i)_{i=1}^d \, :\, x_j=0\},\ j=1,\ldots,d$.
\end{assumption}

This assumption implies that each of the components $X_i,\ i=1,\dots,d$ is heavy-tailed with the same tail index $\xi>0$. 
Indeed, by choosing $A:= sA_i = \{\bfx \in\mathbb R_+^d\, :\, x_i>s\},\ s>0$, in \eqref{e:d:MRV-intro}, and using the scaling  property \eqref{e:d:MRV-intro-scaling}, 
we obtain that for all $s>0$,
\begin{equation}\label{e:Xi-tail}
n \P (X_i > a_n s) \longrightarrow  \vartheta_\bfX(i) s^{-1/\xi},\ \ \mbox{ as }n\to\infty,
\end{equation}
where $\vartheta_\bfX(i):= \mu(A_i)>0 $ is the asymptotic scale coefficient of $X_i$.  
Relation \eqref{e:Xi-tail} implies in particular that the moment $\mathbb E |X_i|^p$ is infinite if $p>1/\xi$ and finite if $0<p<1/\xi$.
The {\em finite-mean} case where $0<\xi<1$ is of primary interest in practice.  Therefore, 
we shall assume throughout that 
$$
 0<\xi \le 1.
$$  
In the infinite-mean case $\xi>1$ an intriguing {\em anti-diversification} phenomenon arises (cf Appendix \ref{sec:xi-trichotomy} below.)

\begin{rem} Assumption \ref{cond:RV} is not very restrictive.  Indeed, it implies that all assets have asymptotically equivalent tails. Had this not 
been the case, only the assets with the heaviest tails would dominate and determine the asymptotic tail behavior of the cumulative loss
$S = X_1+\dots +X_d$.  Thus, when studying extreme VaR, without loss of generality one can focus on the sub-set of losses with heaviest tails.  
\end{rem}

We also standardize the assets to have equal, unit scales such that  \eqref{e:Xi-tail} holds with
\begin{equation}\label{e:Ki-standardization}
 \vartheta_\bfX(i)=1,\ i=1,\ldots,d.
\end{equation}
This standardization does not restrict generality since one can consider the {\em weighted portfolio} 
$$
 S(\bfw) := w_1X_1 +\dots+w_d X_d,
$$
with suitable positive weight vector $\bfw =(w_i)_{i=1}^d$. 

Finally, to separate the roles of the tail behavior and asymptotic dependence, it is convenient to consider the vector
\begin{equation}\label{e:Z}
  \mathbf Z := (X_1^{1/\xi},\, X_2^{1/\xi},\, \cdots,\, X_d^{1/\xi})^\top.
\end{equation}
It can be readily shown that $\bfZ \in RV_{1}(\{b_n\},\nu)$, where $b_n:= a_n^{1/\xi}$ and $\nu(A) := \mu(A^\xi)$.

\medskip
$\bullet$ {\em Extreme VaR formula.} Now,  under the established notation and conditions, Relation \eqref{e:Embrechts} and 
Proposition \ref{p:Barbe-fla-extension} below imply that \eqref{e:extreme-VaR-intro} holds. That is, 
extreme VaR is well-defined, and it has, moreover, the following closed-form expression:
\begin{equation}\label{e:rho_w}
{\cal X}_{(S(\bfw),X_1)} =   \rho_{\bfw} ^{\xi}
\quad \mbox{ with }\quad
 \rho_{\bfw} \equiv \rho_\bfw(H,\xi)  :=  \int_{\mathbb S_+} {\Big(} w_1 u_1^\xi +\cdots + w_d u_d^{\xi} {\Big)}^{1/\xi} H(d\bfu),
\end{equation}
where  $ \mathbb S_+ = \{ \bfx \ge \mathbf 0\, :\, \|\bfx\|=1\}$ is the unit simplex in $\mathbb R^d$.

Here $H$ is the (unique) spectral measure of the vector $\bfZ$ satisfying the marginal moment constraints 
\begin{equation}
 1 =\int_{\mathbb{S}_{+}}u_{j}H (d\boldsymbol{u}),\ j=1,\ldots,d.\label{eq:marginal_constraints}
\end{equation}
Note that since $\sum_{j=1}^d u_j = \|\bfu\| = 1,\ \bfu\in\mathbb S_+$ we have $H(\mathbb S_+)=d.$

Well-known {\em Hoeffding-Fr\'echet type} universal bounds on the value of $\rho_{\bfw}\equiv \rho_{\bfw} (H,\xi)$ are given by
\begin{equation}
\sum_{i=1}^d w^{1/\xi}_i  \le  \rho_\bfw(H,\xi)\le {\Big(}\sum_{i=1}^d w_i {\Big)}^{1/\xi}\quad (0< \xi\le1)\label{eq:universal_rhobds_le1}
\end{equation}
(see e.g. Corollary 4.2 in Embrechts et al \cite{embrechts:lambrigger:wuthrich:2009}).  These
inequalities follow readily from \eqref{e:rho_w}.

The lower bound $\rho_{\bfw} = \sum_{i=1}^d w_i^{1/\xi}$ in \eqref{eq:universal_rhobds_le1} corresponds to (asymptotic) independence 
and the upper bound $\rho_\bfw = {\Big(}\sum_{i=1}^d w_i {\Big)}^{1/\xi}$ to complete tail dependence, where all
components of the vector $\boldsymbol{X}$ are asymptotically identical. This agrees with our intuition about {\em diversification}, where holding 
independent assets leads to the lowest value of extreme VaR, while complete dependence corresponds to the worst case of risk. 
Surprisingly, this intuition is reversed in the infinite-mean regime $\xi>1$ (see Appendix \ref{sec:xi-trichotomy} below.)

\medskip
$\bullet$ {\em Extremal coefficients.} The Hoeffding--Fr\'echet type bounds in \eqref{eq:universal_rhobds_le1} are rather wide.  In practice, however, the range of possible 
values $\rho_\bfw$ can be  significantly reduced under suitable constraints on the extremal dependence of the portfolio.  
In this work, we focus on so-called {\em extremal coefficient} constraints, which capture (in a rough sense) the strength of
tail dependence amongst a given \emph{subset} of assets in the portfolio $\boldsymbol{X}$.

Specifically, for any non-empty set of assets $J \subset \{1,\dots,d\}$ by taking $A:= sA_J = \{\bfx \in\mathbb R_+^d\, :\, x_j>s,\ \mbox{ for some }j\in J\},\ s>0$, 
in Relation \eqref{e:d:MRV-intro}, we obtain
$$
n \P ( \max_{j\in J} X_j > a_n s)  \longrightarrow  \vartheta_\bfX(J) s^{-1/\xi},\ \ \mbox{ as }n\to\infty,
$$
where
$
\vartheta_\bfX(J):=  \mu(A_J)>0
$
is now the asymptotic scale coefficient of the maximum loss $\max_{j\in J} X_j$ over $J$.  The coefficients $\vartheta_\bfX(J)$ will be referred to as
{\em extremal coefficients} of the portfolio $\bfX$.  By Lemma \ref{l:rho} below 
\begin{equation}\label{e:ext-coeffs}
 \vartheta_\bfX(J) = \int_{\mathbb S_+} \max_{j\in J} u_j H(d\bfu),
\end{equation}
where $H$ is the {\em same} spectral measure appearing in \eqref{e:rho_w}.  This, in view of  \eqref{eq:marginal_constraints}, readily implies
\begin{equation}\label{e:intro-theta-bounds}
 \max_{j\in J} \int_{\mathbb S_+} H(d\bfu) = 1 \le  \vartheta_\bfX(J) \le  |J| = \sum_{j\in J} \int_{\mathbb S_+} u_j H(d\bfu),
\end{equation}
where $|J|$ is the size of the set $J$. The upper bound is attained when the $X_j$'s are {\em asymptotically independent}, while the lower bound
corresponds to  the case of perfect asymptotic dependence, e.g., $X_{j_1}=\cdots=X_{j_\ell}$, for $J = \{j_1,\cdots,j_\ell\}$.

The extremal coefficients naturally encode a great variety (although not all) extremal dependence relationships among the assets.  For example, the 
classic upper tail dependence coefficient is expressed as follows
$$
\lambda_\bfX (\{i,j\}) := \lim_{q\uparrow 1} \P( F_{X_i}(X_i) > q | F_{X_j}(X_j)>q) = 2 - \vartheta_{\bfX}(\{i,j\}),
$$
for all $1\le i\not=j\le d$, where $F_{X}(x) = \P(X\le x)$ denotes the cumulative distribution function of a random variable $X$.
In this case, the bounds \eqref{e:intro-theta-bounds} amount to $0\le \lambda_\bfX(\{i,j\})\le 1$,
where $\lambda_\bfX(\{i,j\})=0$ corresponds to asymptotic independence and $\lambda_\bfX(\{i,j\})=1$ to perfect asymptotic dependence.

As another example, the $d$-variate extremal coefficient 
\begin{equation}\label{e:single-d-variate-intro}
\vartheta_d\equiv {\vartheta}(\{1,\dots,d\}) = \lim_{u\to \infty } {\P( \max_{i=1,\dots,d} X_i >u) \over \P(X_1>u)}
\end{equation}
takes values in the range $[1,d]$.  It quantifies the degree to which {\em all assets} in the portfolio experience an extreme loss {\em simultaneously}.  
For example, $\vartheta_d$ equals $1$ under perfect asymptotic dependence (e.g., $X_1=\cdots=X_d$) and it equals $d$ if the assets are 
{\em asymptotically} independent with equal scales. 

\subsection{Summary of our contributions} \label{sec:results}

In view of \eqref{e:rho_w} determining the best- and worst-case extreme VaR scenaria amounts to solving a pair of 
infinite-dimensional optimization problems over a space of admissible spectral measures ${\cal H}$. 
Namely, we consider a {\em large} family ${\cal H}$ of spectral measures and posit the optimization problems
\begin{eqnarray}\label{e:ell-u-problems}
\L({\cal H}):= \inf_{H\in {\cal H}} \rho_\bfw(H,\xi)\quad \mbox{ and }  \quad \U({\cal H}):= \sup_{H\in{\cal H}} \rho_\bfw(H,\xi),
\end{eqnarray}
 where 
$
 \rho_\bfw(H,\xi) = \int_{\mathbb S_+} (w_1 u_1^\xi +\cdots + w_d u_d^\xi )^{1/\xi} H(d\bfu).
$

Then, in view of \eqref{e:rho_w}, we obtain the following {\em universal} lower and upper bounds for extreme VaR:
\begin{equation}\label{e:ext_VaR-bounds}
 \L^\xi({\cal H}) \le {\cal X}_{(S,X_1)} \le \U^\xi({\cal H}).
\end{equation}
If the class ${\cal H}$ includes all admissible (normalized) spectral measures, these bounds can be rather wide 
(see Relation \ref{eq:universal_rhobds_le1}), which may limit their practical value in establishing capital requirements.  
As indicated, we consider classes of {\em all possible} spectral measures ${\cal H}$ that satisfy extremal coefficient 
constraints such that
$$
\vartheta_\bfX(J) \equiv \int_{\mathbb S_+} \max_{j\in J} u_j H(d\bfu) = c_J,\ \ J\in {\cal J},
$$
for a given family of non-empty subset of assets ${\cal J}\subset 2^{\{1,\cdots,d\}}\setminus\emptyset$.  The standardization 
\eqref{e:Ki-standardization} corresponds to the singleton sets $J=\{i\}$ and $c_{\{i\}}=1$, which for our applications, will be always included as constraints.  

The constants $c_J$ can be either estimated or assigned by a domain expert.  They can be used to encode structural information such as
asymptotic independence (cf Section \ref{sec:mkt-sec}, below). Figure \ref{fig:d-variate} illustrates that the knowledge of the single $d$-variate extremal coefficient 
constraint in \eqref{e:single-d-variate-intro} can dramatically reduce the range of {\em all possible} extreme VaR ${\cal X}$, even in dimensions as high as $d=100$.  
\begin{figure}[H]
\centering{}\includegraphics[width=6in]{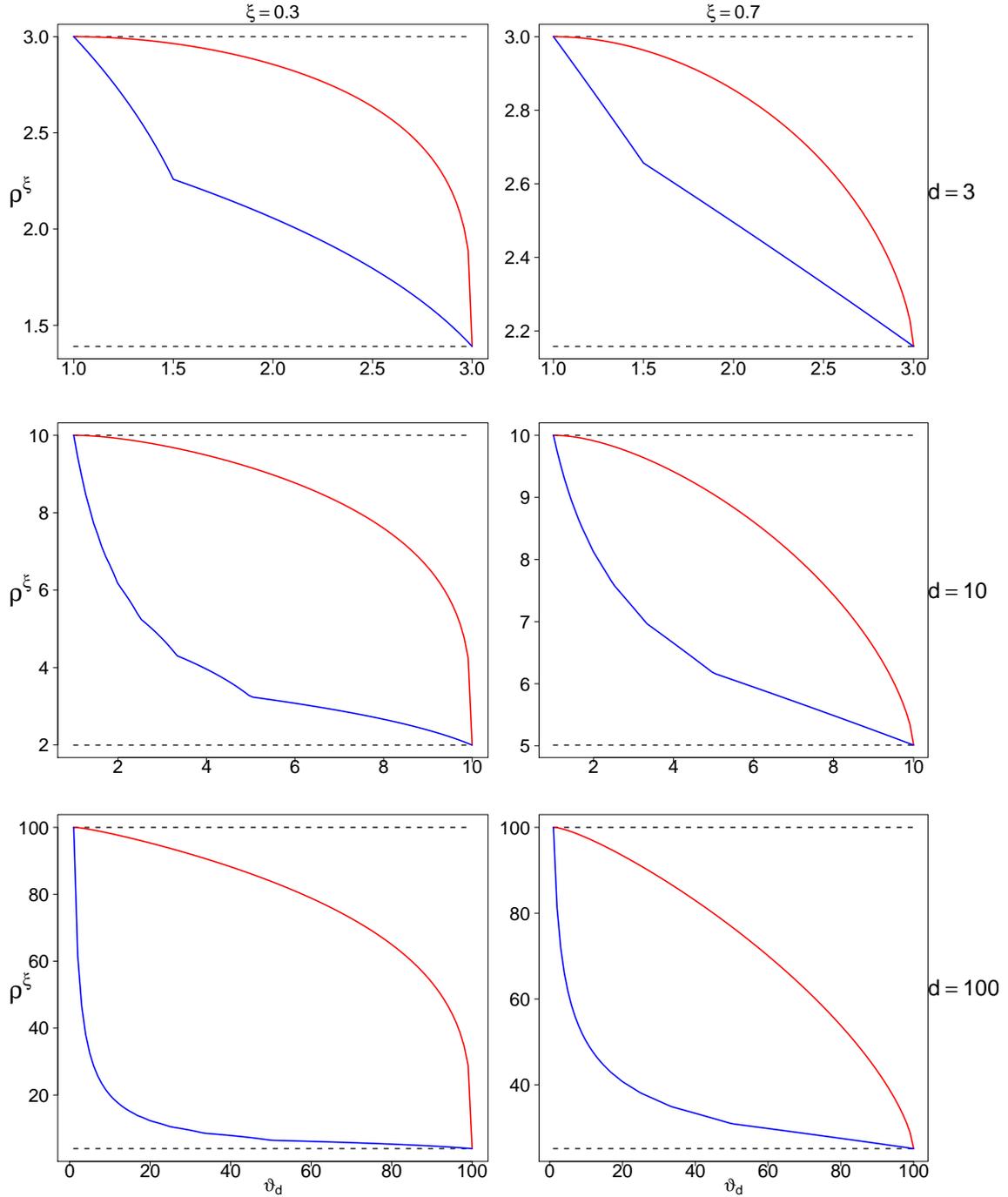}
\protect\caption{Upper and lower bounds on extreme VaR $\rho^{\xi}$ when given a single
fixed $d$-variate extremal coefficient $\vartheta_{d}$ constraint.\label{fig:d-variate} }
\end{figure}

\medskip
{\bf Contributions.} Observe that both the objective function in \eqref{e:rho_w} and the constraints in \eqref{e:ext-coeffs} 
are {\em linear} in the parameter $H$.  The challenge is, however, that $H$ takes values in an infinite-dimensional space of measures.  
Our findings can be summarized by three main themes:

$\bullet$ {\em Optimal measures have finite support.} We establish structural results showing that the infimum and 
supremum of $\rho_\bfw$ are attained by {\em discrete measures} that are supported 
on a finite set of atoms. In each case, the number of atoms is not more than 
the number of constraints (Theorem \ref{thm:main_redux}).  Thus, in principle, the linear 
infinite-dimensional problems reduce to non-linear finite-dimensional 
optimization problems.  These results stem from a fundamental 
connection with the theory of linear semi-infinite optimization 
outlined in Section \ref{sec:LSIP} below. 


$\bullet$ {\em A Tawn-Molchanov minimizer and a convex maximizer.} Surprisingly, the infimum of $\rho_{\bfw}$ and in turn the lower bound 
on ${\cal X}$ is attained by measures with the {\em same support} as the celebrated Tawn-Molchanov models in Strokorb  
and Schlather  \cite{strokorb2015}.  
This allows us to further reduce the optimization to a linear program, which can be solved {\em exactly} using conventional linear solvers
in moderate dimension. We also establish that the maximization problem reduces to an ordinary convex optimization problem which can be solved 
in polynomial time within arbitrary precision.  Efficient solvers for these optimization problems have yet to be implemented, 
nevertheless our theoretical results suggest that they can be efficiently solved.


$\bullet$ {\em Closed form solutions.} Finally, in the case of a single $d$-variate constraint, we establish {\em closed form} expressions 
for both the lower- and upper-bounds, which are valid {\em in arbitrary} dimensions.  These formulae were 
used in Figure \ref{fig:d-variate} and further leveraged in Section \ref{sec:mkt-sec} to illustrate how conditional independence can lead to 
very substantial reduction of the range of extreme VaR.

\medskip
{\em The rest of the paper is structured as follows.} Section \ref{sec:background} reviews key results from the theory of linear semi-infinite programming (LSIP)
and demonstrates that our optimization problem can be viewed as a dual of a LSIP.  This connection is further explored in Section \ref{sec:Main-results}, where
the main results on the general characterization of the spectral measures attaining the minimum and maximum extreme VaR are presented.
Section \ref{sec:closed_form} proceeds with more detailed results on in the cases of the Tawn-Molchanov minimizer and our closed form solutions.  
Section \ref{sec:applications} briefly illustrates the established theory.  In Section \ref{sec:industry-portfolia}, using a data set of industry portfolia, we show how utilizing all
bi-variate extremal coefficient constraints can lead to tight bounds on extreme VaR, which are in close agreement with semi-parametric
estimates obtained using Extreme Value Theory.  In Section \ref{sec:mkt-sec}, we provide a practical application of the closed-form formulae for the bounds on 
extreme VaR in a context of a market and sectors model.  This demonstrates, how expert knowledge on the structure of the market can be encoded
via extremal coefficient constraints in cases where data may be scarce. The proofs and auxiliary facts from optimization are collected in the Appendix.

\section{A Connection to Linear Semi-Infinite Programming} \label{sec:background}

In this section, we will show that our optimization problems are in fact duals to 
{\em linear semi-infinite programming} (LSIP) problems.  This will lead to profound
structural results and certain closed-form solutions for upper and lower bounds on extreme VaR.

\subsection{Problem formulation}

Recall that we want to solve the pair of optimization
problems:
\begin{eqnarray}
(\L) \quad\quad\quad &  & \inf_{H}\rho_\bfw(H,\xi) \label{e:L-bound}\\
(\U) \quad\quad\quad & &  \sup_{H}\rho_\bfw(H,\xi)\label{e:U-bound}\\
\text{subject to:} &  & \int_{\mathbb{S}_{+}}\max_{j\in J}\{u_{j}\}H(d\boldsymbol{u})=c_{J},\text{ for all }J\in\mathcal{J},\label{eq:H_constraints}
\end{eqnarray}
where ${\cal J} \subset 2^{\{1,\cdots,d\}}$,
is a collection of non-empty subsets of indices $\{1,\cdots,d\}$;
the functional $\rho_\bfw$ is in \eqref{e:rho_w}; and the supremum and infimum are taken over all finite measures $H$
on $\mathbb{S}_{+}$ that satisfy the extremal coefficient constraints in \eqref{eq:H_constraints}. 

\begin{rem} \label{rem:consistent-thetas} A set of non-negative constants $\boldsymbol{c}=(c_{J})_{J\subset\{1,\ldots,d\}}\in\mathbb{R}_{+}^{2^{d}-1}$ can be
the extremal coefficients of a random vector $\bfX$, if and only if they satisfy the {\em consistency relationships}
\[
\boldsymbol{c}\in\varTheta:=\left\{ \boldsymbol{\vartheta}\in\mathbb{R}_{+}^{2^{d}-1}:\sum_{L\,  :\, J \subseteq L}(-1)^{|L\setminus J|+1}\vartheta(L)\ge0,\ \text{for all }J\subsetneq\{1,\ldots,d\}\right\}.
\]
See Corollary 5 in \cite{schlather:tawn:2002} and \cite{strokorb2015} for more details.
\end{rem}

Extremal coefficients are only summary, moment-type functionals, and they alone do not fully characterize the spectral
measure $H$, except in special cases \cite{strokorb2015}. In general, however, it is not known to what 
extent the full or partial knowledge of the extremal coefficients confine the set of possible values of $\rho_\bfw$ and hence extreme VaR. 
This is one of the motivations for our work. 

\begin{assumption}
\label{as:margin_const}We assume that the marginal constraints \eqref{eq:marginal_constraints}
are always included in \eqref{eq:H_constraints} by requiring that
the singletons $\{1\},\ldots,\{d\}$ belong to $\mathcal{J}$ and
$c_{\{j\}}=1$ for $j=1,\ldots,d.$ To avoid further situations that result in trivial optimization problems, we
also assume $\mathcal{J}$ is sufficiently rich such that 
\[
1=\sum_{j=1}^{d}u_{j}<\sum_{J\in\mathcal{J}}\max_{j\in J}\{u_{j}\},\ \text{for all }\boldsymbol{u}\in\mathbb{S}_{+}.
\]
In particular, this holds if $\mathcal{J}$ includes all pairs or the set
$\{1,\ldots,d\}\in\mathcal{J}$.
\end{assumption}

\subsection{Linear semi-infinite programming\label{sec:LSIP}}

The purpose of this section is to review definitions and notations
from the field of {\em linear semi-infinite programming} (LSIP) that we will use throughout
this paper (see also Appendix \ref{subsec:KKT}).  Our main contributions
in the following Section \ref{sec:Main-results} such as the existence of solutions to 
$(\L)$ and $(\U)$ with finite support (reducibility) and
and exact formulae for the optimum will leverage powerful results from this established theory. 
Those interested in a more comprehensive treatment is referred to the monograph of Goberna and Lopez \cite{goberna:lopez:1998} as well
as the review by Shapiro \cite{shapiro:2009}.  See also \cite{goberna:lopez:2018} for a survey of recent
advancements in LSIP.

\medskip
\noindent  {\bf Formulation.} Linear semi-infinite programs are formulated as follows: 
\begin{eqnarray*}
(P)\quad\quad\quad\quad\inf_{\boldsymbol{x}\in\mathbb{R}^{p}} &  & \boldsymbol{c}^{\top}\boldsymbol{x}\\
\text{subject to:} &  & b(t)-\boldsymbol{a}(t)^{\top}\boldsymbol{x}\le0,\ t\in T,
\end{eqnarray*}
where $T$ is a (possibly infinite) index set. For a given mathematical
program, say $(\tilde{P})$, we use the notation $\text{val}(\tilde{P}),$
to denote its optimal value while $\text{sol}(\tilde{P})$ denotes
the solution set, i.e. the set of feasible points that yield optimal
values. Generally, $\text{val}(\tilde{P})$ may be infinite and $\text{sol}(\tilde{P})$
my be empty. If $\text{sol}(\tilde{P})=\varnothing$, then by convention
$\text{val}(\tilde{P})=\infty$ and we say $(\tilde{P})$ is \emph{unsolvable.}

The following assumption establishing the \emph{continuity} of $(P)$ (in the language of LSIPs) has far reaching 
consequences in terms of the structure of solutions to $(P)$. 

\begin{assumption}
\label{as:main} In $(P)$, we suppose $T$ is a compact subset of $\mathbb{R}^{d}$ and
$\boldsymbol{a}:T\mapsto\mathbb{R}^{p}$, $b:T\mapsto\mathbb{R}$ are continuous and hence bounded on $T$. 
\end{assumption}
Thus, we define the \emph{Lagrangian} of problem $(P)$ as the function

\emph{$L:\mathbb{R}^{p}\times\Omega\mapsto\mathbb{R}$} \emph{
\begin{equation}
L(\boldsymbol{x},\omega)=\boldsymbol{c}^{\top}\boldsymbol{x}+\int_{T}\left(b(t)-\boldsymbol{a}(t)^{\top}\boldsymbol{x}\right)\omega(dt),\label{eq:lagrangian}
\end{equation}
}where $\Omega$ is the space of finite (non-negative) Borel measures
on $T$. 

\begin{rem} Assumption \ref{as:main} allows us to express the Lagrangian function as
\eqref{eq:lagrangian}. This follows from the fact that the topological
\emph{dual space} of continuous functions on the compact set $T\subset\mathbb{R}^{p}$
is indeed the space of Borel measures on $T$.  For more details
see e.g.\ Ch.\ 2 of \cite{goberna:lopez:1998}. 
\end{rem}

\begin{rem} While Assumption \ref{as:main} appears as a rather strong condition in the literature of LSIP,
 we will show in Section \ref{sec:Main-results} that Assumption \ref{as:main} is naturally satisfied for our main
 motivating problems $(\U)$ and $(\L)$.
\end{rem}

\noindent {\bf Duality.} We define the \emph{dual function} $g:\Omega\mapsto\mathbb{R}$ as
\[
g(\omega)=\inf_{\boldsymbol{x}\in\mathbb{R}^{p}}L(\boldsymbol{x},\omega).
\]
The dual function yields a lower bound on the optimal value of $(P)$. Indeed, by $(P)$,
for any feasible\emph{ }$\tilde{\boldsymbol{x}}\in\mathbb{R}^{p}$, it follows that 
\[
\int_{T}\left(b(t)-\boldsymbol{a}(t)^{\top}\tilde{\boldsymbol{x}}\right)\omega(dt)\le0,
\]
which implies
\begin{equation}
g(\omega)=\inf_{\boldsymbol{x}\in\mathbb{R}^{p}}L(\boldsymbol{x},\omega)\le\boldsymbol{c}^{\top}\tilde{\boldsymbol{x}}+\int_{T}\left(b(t)-\boldsymbol{a}(t)^{\top}\tilde{\boldsymbol{x}}\right)\omega(dt)\le\boldsymbol{c}^{\top}\tilde{\boldsymbol{x}}.\label{eq:dual_lwrbound}
\end{equation}
 The fact that the feasible $\tilde{\boldsymbol{x}}$ was arbitrary implies $g(\omega)\le\mathrm{val}(P).$ 
 This inequality is trivial unless $\int_{T}\boldsymbol{a}(t)\omega(dt)=\boldsymbol{c}$. Indeed, otherwise if 
 $\int_{T}\boldsymbol{a}(t)\omega(dt)\not =\boldsymbol{c}$, then for some $\boldsymbol{x_0},$ we have
 $\mathbf c^\top\boldsymbol{x_0} - \int_T \boldsymbol{a}(t) ^\top \boldsymbol{x_0} \omega(dt) <0$, and hence by Assumption \ref{as:main}, it follows that 
 $g(\omega)=\inf_{\boldsymbol{x}\in\mathbb{R}^{p}}L(\boldsymbol{x},\omega) = -\infty$.
 
 Therefore, only measures $\omega \in \Omega$ for which $\int_{T}\boldsymbol{a}(t)\omega(dt)=\boldsymbol{c}$ holds
 are of interest and they are referred to as \emph{dual feasible}. Thus we arrive at the following
\emph{dual problem}:
\begin{eqnarray*}
(D)\quad\quad\quad\quad\sup_{\omega\in\Omega} &  & \int_{T}b(t)\omega(dt)\\
\text{subject to:} &  & \int_{T}\boldsymbol{a}(t)\omega(dt)=\boldsymbol{c}.
\end{eqnarray*}

In view of \eqref{eq:dual_lwrbound}, we have that 
\begin{equation}\label{e:dual-bound}
 \sup_{\omega\in \Omega} \inf_{\boldsymbol{x}\in \mathbb R^d} L(\boldsymbol{x}, \omega) = {\rm val}(D) \le {\rm val}(P).
\end{equation}
A common task with many optimization problems is to determine the
existence (or non-existence) of a \emph{duality gap, $|\mathrm{val}(P)-\mathrm{val}(D)|$.
}If $\mathrm{val}(P)=\mathrm{val}(D)$, then it suffices to
solve either $(P)$ or $(D)$ to obtain the optimal value, so long
as both problems are \emph{solvable}. The condition $\mathrm{val}(P)=\mathrm{val}(D)$
with $\mathrm{sol}(D)\not=\varnothing$ is known as \emph{strong duality.}
If $(P)$ is \emph{solvable}, i.e. $\mathrm{val}(P) < \infty$, then under assumption \ref{as:main}, a sufficient condition for strong duality of $(P,D)$ is \emph{Slater's Condition}, i.e. there
exists $\tilde{\boldsymbol{x}}\in\mathbb{R}^{p}$ such that 
\begin{equation}\label{e:Slater-cond}
b(t)-\boldsymbol{a}(t)^{\top}\tilde{\boldsymbol{x}}<0,\ \text{for all }t\in T.
\end{equation}
See Theorem 2.3 in \cite{shapiro:2009} for further details on Slater's condition and strong duality for LSIPs.

The above discussion reveals a fundamental connection between the two optimization problems in \eqref{e:L-bound} and 
\eqref{e:U-bound} and the theory of LSIP.

\begin{cor} The problem of finding the {\em upper bound} $(\U)$ in \eqref{e:U-bound} under extremal coefficient constraints \eqref{eq:H_constraints} 
is the dual to an LSIP problem $(P)$, where 
$$
T = \mathbb S_+,\quad \mathbf a_J(t) = \max_{j\in J}\, t_j,\ J\in {\cal J},\quad b(t) = \left(\sum_{i=1}^d w_i t_i^\xi\right)^{1/\xi}\quad\mbox{ and }\quad \mathbf c = (c_J)_{J\in {\cal J}}.
$$
Similarly, the problem of finding the lower bound $(\L)$ in \eqref{e:L-bound} is the dual of an LSIP involving maximization, where
formally `$\sup$' is reduced to `$\inf$' by changing the sign of the objective function.
\end{cor}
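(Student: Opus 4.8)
The plan is to recognize $(\U)$ as a concrete instance of the abstract dual problem $(D)$ developed above, by matching its data term for term. I would take the index set $T=\mathbb S_+$, the scalar objective kernel $b(t)=\bigl(\sum_{i=1}^{d} w_i t_i^{\xi}\bigr)^{1/\xi}$, the vector of constraint functions $\boldsymbol{a}(t)=(\boldsymbol{a}_J(t))_{J\in\mathcal J}$ with $\boldsymbol{a}_J(t)=\max_{j\in J} t_j$, and the constraint vector $\boldsymbol{c}=(c_J)_{J\in\mathcal J}$, and I would let a finite non-negative Borel measure $H$ on $\mathbb S_+$ play the role of the dual variable $\omega\in\Omega$. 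Under these substitutions the dual objective $\int_T b(t)\,\omega(dt)$ is exactly $\rho_\bfw(H,\xi)$ as in \eqref{e:rho_w}, while the dual feasibility constraint $\int_T \boldsymbol{a}(t)\,\omega(dt)=\boldsymbol{c}$ is precisely the extremal coefficient constraint system \eqref{eq:H_constraints}. Hence $(\U)$ and $(D)$ coincide, so that $(\U)$ is the dual of the LSIP $(P)$ with the stated data.

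The one substantive point is to confirm that this data meets Assumption \ref{as:main}, since that assumption is what justifies representing $\Omega$ as the space of finite Borel measures on $T$ and deriving $(D)$ from the Lagrangian. Here $T=\mathbb S_+$ is a compact subset of $\mathbb R^{d}$. Each component $\boldsymbol{a}_J(t)=\max_{j\in J} t_j$ is continuous on $T$, being the maximum of finitely many continuous maps, so $\boldsymbol{a}:T\to\mathbb R^{|\mathcal J|}$ is continuous. For the kernel $b$, since $0<\xi\le 1$ the map $s\mapsto s^{\xi}$ is continuous on $[0,\infty)$ with value $0$ at $s=0$; thus $t\mapsto \sum_{i=1}^{d} w_i t_i^{\xi}$ is continuous and non-negative on $T$, and composing with the continuous map $v\mapsto v^{1/\xi}$ shows $b$ is continuous. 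Being continuous on the compact set $T$, both $\boldsymbol{a}$ and $b$ are bounded, so Assumption \ref{as:main} holds and the identification $(\U)=(D)$ is legitimate.

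For the lower bound I would use the elementary identity $\L=\inf_H \rho_\bfw(H,\xi)=-\sup_H\bigl(-\rho_\bfw(H,\xi)\bigr)$, so that $-\L$ is a supremum of exactly the form $(D)$ with the \emph{same} $T$, $\boldsymbol{a}$, and $\boldsymbol{c}$ but with the sign-reversed kernel $\tilde b(t)=-b(t)$; changing the sign of the objective function is precisely what converts the $\inf$ into the $\sup$ of $(D)$. Since $\tilde b$ inherits continuity from $b$, Assumption \ref{as:main} again holds, and $\L$ is (up to this sign reversal) the dual of an LSIP, equivalently the dual of the maximization program obtained from $(P)$ by replacing $\inf$ with $\sup$. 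The only genuinely delicate step in the whole argument is the continuity of $b$ on the faces of the simplex where some coordinates $t_i$ vanish; this is exactly where the finite-mean restriction $0<\xi\le1$ is used, guaranteeing that $t_i^{\xi}$ extends continuously to $t_i=0$ and thereby placing both problems squarely inside the continuous LSIP framework of Assumption \ref{as:main}.
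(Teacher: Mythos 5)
Your proposal is correct and follows essentially the same route as the paper: the corollary is obtained there by direct term-by-term identification of $(\U)$ with the abstract dual $(D)$, and the continuity/compactness check you perform is exactly the verification of Assumption \ref{as:main} carried out in the proof of Theorem \ref{thm:LSIPexists}(i). One minor quibble: continuity of $s\mapsto s^{\xi}$ at $s=0$ holds for every $\xi>0$, so the restriction $0<\xi\le 1$ is not actually what makes $b$ continuous on the faces of the simplex; that restriction matters elsewhere (e.g.\ for the direction of the Fr\'echet bounds and the convexity of $-f$), not here.
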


This connection allows us to employ powerful results from the LSIP theory discussed next.

\medskip
\noindent {\bf Reducibility.} The following discussion lays the groundwork for establishing the finite support of optimal solutions to $(\L)$ and $(\U)$. 
Consider a finite index set $T_{m}\subset T$ with $|T_{m}|\le m.$
Solving problem $(P)$ when the constraints are restricted to the
finite set $T_{m}$ reduces to a {\em standard linear program} 
\begin{eqnarray*}
(P_{m})\quad\quad\quad\quad\inf_{\boldsymbol{x}\in\mathbb{R}^{p}} &  & \boldsymbol{c}^{\top}\boldsymbol{x}\\
\text{subject to:} &  & b(t_{i})-\boldsymbol{a}(t_{i})^{\top}\boldsymbol{x}\le0,\ t_{i}=T_{m},\ i=1,\ldots,m,
\end{eqnarray*}
which yields the corresponding dual 
\begin{eqnarray*}
(D_{m})\quad\quad\quad\quad\sup_{\boldsymbol{\omega}\in\mathbb{R}_{+}^{m}} &  & \sum_{i=1}^{m}b(t_{i})\omega_{i}\\
\text{subject to:} &  & \sum_{i=1}^{m}\boldsymbol{a}(t_{i})\omega_{i}=\boldsymbol{c},\ t_{i}=T_{m},\ i=1,\ldots,m.
\end{eqnarray*}
Problem $(P_{m})$ is called a \emph{discretization }of $(P)$. The
feasible set for $(P)$ is contained in the feasible set for $(P_{m})$.
Hence, $\mathrm{val}(P_m)\le\mathrm{val}(P).$ If for every $\varepsilon>0$,
there exists $(P_{m(\varepsilon)})$ such that $\mathrm{val}(P)-\mathrm{val}(P_{m(\varepsilon)})\le\varepsilon$
than we say $(P)$ is \emph{discretizable. } Whereas, if there exists $(P_{m})$
such that $\mathrm{val}(P_{m})=\mathrm{val}(P)$ then $(P)$ is said
to be \emph{reducible.}  In this case, on the language of measures, the optimum is attained by a discrete measure 
$\omega(dt) = \sum_{i=1}^m \nu_i \delta_{\{t_i\}}(dt)$ with a finite support $\{t_1,\ldots,t_m\}\subset T$.

\begin{rem} Even if an LSIP is theoretically reducible, it may be challenging to find the actual support
set of an $\omega \in {\rm sol}(D)$. This is because finding the support amounts to solving a non-linear optimization problem.
\end{rem}

The following proposition establishes conditions for the \emph{reducibility} of the LSIP $(P)$.
\begin{prop}[Theorem 3.2 in \cite{shapiro:2009}]
\label{prop:LSIP_reducibility} Suppose that for problem $(P)$, Assumption \ref{as:main} holds and $\mathrm{val}(P)<\infty$.
If for any $\{t_{1},t_{2},\ldots t_{p+1}\}\subset T$, there exists $\boldsymbol{x}\in\mathbb{R}^{p}$
such that 
\begin{equation}\label{e:slater_disc_m}
\boldsymbol{a}(t_{k})^{\top}\boldsymbol{x}>b(t_{k}),\ k=1,\ldots,p+1.
\end{equation}
Then there exists $\{t_{1},\ldots,t_{m}\}=T_{m}\subset T$ with $m\le p$
such that for corresponding discretizations $(P_{m})$ and $(D_{m})$
\[
\mathrm{val}(P)=\mathrm{val}(P_{m})=\mathrm{val}(D_{m})=\mathrm{val}(D).
\]
\end{prop}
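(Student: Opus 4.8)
The plan is to recast both $(P)$ and its dual $(D)$ through the \emph{moment cone}
\[
 M := \left\{ \left( \int_T \boldsymbol{a}(t)\,\omega(dt),\ \int_T b(t)\,\omega(dt) \right) : \omega\in\Omega \right\} = \operatorname{cone}(C)\subset\mathbb{R}^{p+1},
\]
where $C:=\{(\boldsymbol{a}(t),b(t)):t\in T\}$. Under Assumption \ref{as:main} the set $C$ is compact, and $(D)$ is precisely the maximization of the last coordinate over the fiber $M\cap(\{\boldsymbol{c}\}\times\mathbb{R})$. The first step is to turn hypothesis \eqref{e:slater_disc_m} into a geometric statement about $M$. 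By an elementary theorem of the alternative, \eqref{e:slater_disc_m} is equivalent to the fact $(\ast)$: \emph{every nontrivial $\lambda\ge 0$ supported on $\le p+1$ points with $\sum_k\lambda_k\boldsymbol{a}(t_k)=\boldsymbol 0$ satisfies $\sum_k\lambda_k b(t_k)<0$}. Indeed, if $\boldsymbol{x}$ witnesses \eqref{e:slater_disc_m} on $t_1,\dots,t_{p+1}$ then $0=\boldsymbol{x}^\top\!\sum_k\lambda_k\boldsymbol{a}(t_k)=\sum_k\lambda_k\boldsymbol{a}(t_k)^\top\boldsymbol{x}>\sum_k\lambda_k b(t_k)$. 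Since by Carathéodory's theorem in $\mathbb{R}^{p+1}$ any nonnegative combination can be rewritten using at most $p+1$ points, $(\ast)$ holds without the cardinality restriction; in particular $0\notin\operatorname{conv}(C)$. A cone generated by a compact set whose convex hull avoids the origin is closed, so $M$ is closed.

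Next I would read off \emph{attainment} and \emph{strong duality}. Closedness makes the fiber $\{s:(\boldsymbol{c},s)\in M\}$ closed, and it is bounded above: an unbounded fiber would give $(\boldsymbol{c},s_n)/s_n\to(\boldsymbol 0,1)\in M$, i.e.\ a nontrivial $\lambda\ge 0$ with $\sum_i\lambda_i\boldsymbol{a}(t_i)=\boldsymbol 0$ and $\sum_i\lambda_i b(t_i)=1>0$, contradicting $(\ast)$. Hence the supremum in $(D)$ is finite and attained, so $\mathrm{sol}(D)\neq\varnothing$. Together with weak duality \eqref{e:dual-bound} and a separation argument in $\mathbb{R}^{p+1}$ — equivalently, the Slater-based strong-duality result quoted after \eqref{e:Slater-cond}, whose Slater point \eqref{e:Slater-cond} one recovers from \eqref{e:slater_disc_m} by a Helly-type compactness argument over the compact $T$ — this yields $\mathrm{val}(P)=\mathrm{val}(D)<\infty$.

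The support bound is then a dimension count. Fix $\omega^\star\in\mathrm{sol}(D)$, so $(\boldsymbol{c},\mathrm{val}(P))=\int_T(\boldsymbol{a}(t),b(t))\,\omega^\star(dt)$ is the maximal point of the fiber and therefore lies on the boundary of the closed cone $M$. Choose a supporting hyperplane $H$ of $M$ through this point; as $M$ is a cone, $H$ may be taken to be a $p$-dimensional linear subspace, and the exposed face $M\cap H$ is a convex cone inside $H\cong\mathbb{R}^{p}$, generated by those $(\boldsymbol{a}(t),b(t))$ lying in $H$. Applying Carathéodory's theorem for cones \emph{inside $H$} writes $(\boldsymbol{c},\mathrm{val}(P))$ as a nonnegative combination of at most $p$ generators $(\boldsymbol{a}(t_i),b(t_i))$. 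Setting $T_m:=\{t_1,\dots,t_m\}$ with $m\le p$ and $\omega=\sum_i\nu_i\delta_{t_i}$ gives a feasible point of $(D_m)$ with value $\mathrm{val}(P)$.

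Finally I would assemble the chain. Ordinary linear-programming duality gives $\mathrm{val}(P_m)=\mathrm{val}(D_m)$, and restricting constraints enlarges the primal feasible set while shrinking the dual, so $\mathrm{val}(P_m)=\mathrm{val}(D_m)\le\mathrm{val}(D)\le\mathrm{val}(P)$; the discrete measure above forces $\mathrm{val}(D_m)\ge\mathrm{val}(P)$, collapsing all four quantities. I expect the real difficulty to lie in the two closedness/no-gap steps: extracting $(\ast)$ and hence closedness of $M$ from the finite-subsystem condition \eqref{e:slater_disc_m} (and, on the separation route, upgrading \eqref{e:slater_disc_m} to a genuine Slater point over the infinite index set $T$, where Helly's theorem for open half-spaces must exploit the compactness of $T$), together with the careful choice of supporting hyperplane that lets Carathéodory run in dimension $p$ rather than $p+1$ — this is exactly what sharpens the naive bound $m\le p+1$ to the asserted $m\le p$.
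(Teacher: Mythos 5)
The paper offers no proof of this proposition: it is imported verbatim as Theorem 3.2 of Shapiro's survey \cite{shapiro:2009}, so there is no in-paper argument to compare yours against. Judged on its own, your reconstruction follows the standard moment-cone route for LSIP reducibility, and its best part is exactly the step that matters most here: placing $(\boldsymbol{c},\mathrm{val}(P))$ on the boundary of the closed cone $M$, passing to the exposed face cut out by a supporting hyperplane through the origin, and running the conic Carath\'eodory theorem inside that $p$-dimensional subspace. That is the correct mechanism for sharpening the naive bound $m\le p+1$ to $m\le p$, and your verification that any generators appearing in a conic representation of a point of the face must themselves lie in the face is implicitly what makes it work.

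Two steps need tightening. First, your passage from the finite-subsystem condition \eqref{e:slater_disc_m} to the unrestricted statement $(\ast)$ via ``Carath\'eodory in $\mathbb{R}^{p+1}$'' does not cover the degenerate case $\sum_k\lambda_k b(t_k)=0$: there the combined vector $\sum_k\lambda_k(\boldsymbol{a}(t_k),b(t_k))$ is zero, and Carath\'eodory returns only the trivial representation, so no contradiction with \eqref{e:slater_disc_m} is produced. The clean fix is either finite Helly in $\mathbb{R}^p$ applied to the open half-spaces $\{\boldsymbol{x}:\boldsymbol{a}(t_k)^{\top}\boldsymbol{x}>b(t_k)\}$, or a basic-feasible-solution argument: maximize $\sum_k\lambda_k b(t_k)$ over the polytope $\{\lambda\ge 0:\sum_k\lambda_k\boldsymbol{a}(t_k)=\boldsymbol 0,\ \sum_k\lambda_k=1\}$, note that a vertex has support of size at most $p+1$, and apply \eqref{e:slater_disc_m} there; this also yields $0\notin\operatorname{conv}(C)$ and hence the closedness of $M$ you need. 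Second, the absence of a duality gap is asserted rather than proved. The separation argument you invoke is not automatic: separating $(\boldsymbol{c},\mathrm{val}(P))$ from the closed cone $M$ by $(\boldsymbol{y},y_0)$ with $\boldsymbol{y}^{\top}\boldsymbol{c}+y_0\,\mathrm{val}(P)<0\le \boldsymbol{y}^{\top}\boldsymbol{a}(t)+y_0\,b(t)$ gives a contradiction readily when $y_0<0$ (a better primal point) or $y_0=0$ (an unbounded primal ray, using $\mathrm{val}(P)>-\infty$), but the case $y_0>0$ only contradicts weak duality once dual feasibility of $(\boldsymbol{c},\cdot)$ is already known, which is precisely what is at stake. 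Your alternative route through the quoted Slater-based strong duality requires upgrading \eqref{e:slater_disc_m} to a genuine Slater point over all of the infinite set $T$, which an appeal to Helly alone does not deliver for an infinite family of open half-spaces. You correctly flag both of these as the hard points, but as written they remain gaps; everything downstream of them (attainment, the face argument, and the collapse $\mathrm{val}(P)=\mathrm{val}(P_m)=\mathrm{val}(D_m)=\mathrm{val}(D)$ via finite LP duality and monotonicity of discretizations) is sound.
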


Note that if Slater's condition holds for $(P)$, then \eqref{e:slater_disc_m} is satisfied.  Which yields the
following corollary

\begin{cor}\label{cor:lsip_sol_p_atoms}
If Assumption \ref{as:main} holds for $(P)$, $\mathrm{val}(P)<\infty$ and Slater's condition holds, 
then there exists a (strong) dual pair $(P,D)$ and $\omega\in\mathrm{sol}(D)\subset\Omega$
such that $\omega$ is finitely supported on at most $p$ atoms $\{t_{1},t_{2},\ldots,t_{p}\}\subset T$. 
\end{cor}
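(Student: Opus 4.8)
The plan is to deduce the corollary directly from Proposition \ref{prop:LSIP_reducibility}, using Slater's condition to verify its hypothesis and then promoting a solution of the discretized dual to a finitely supported solution of the full dual $(D)$. First I would observe that Slater's condition \eqref{e:Slater-cond} produces a single $\tilde{\boldsymbol{x}}\in\mathbb{R}^{p}$ with $\boldsymbol{a}(t)^{\top}\tilde{\boldsymbol{x}}>b(t)$ for \emph{every} $t\in T$. In particular, for an arbitrary choice of $p+1$ points $\{t_{1},\ldots,t_{p+1}\}\subset T$, this same $\tilde{\boldsymbol{x}}$ satisfies $\boldsymbol{a}(t_{k})^{\top}\tilde{\boldsymbol{x}}>b(t_{k})$ for $k=1,\ldots,p+1$, so the pointwise requirement \eqref{e:slater_disc_m} of Proposition \ref{prop:LSIP_reducibility} holds. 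Together with Assumption \ref{as:main} and $\mathrm{val}(P)<\infty$, this lets me invoke the Proposition.

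Applying Proposition \ref{prop:LSIP_reducibility} yields a finite set $T_{m}=\{t_{1},\ldots,t_{m}\}\subset T$ with $m\le p$ for which $\mathrm{val}(P)=\mathrm{val}(P_{m})=\mathrm{val}(D_{m})=\mathrm{val}(D)$. The chain of equalities already gives $\mathrm{val}(P)=\mathrm{val}(D)$, so it remains to exhibit a finitely supported $\omega\in\mathrm{sol}(D)$. To this end I would use that $(D_{m})$ is an ordinary finite-dimensional linear program over $\mathbb{R}_{+}^{m}$ whose optimal value $\mathrm{val}(D_{m})=\mathrm{val}(P)<\infty$ is finite; since this value is neither $-\infty$ (so $(D_m)$ is feasible) nor $+\infty$ (so it is bounded above), the fundamental theorem of linear programming guarantees the optimum is attained. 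Hence there exists $\boldsymbol{\omega}^{\star}=(\omega_{1}^{\star},\ldots,\omega_{m}^{\star})\in\mathbb{R}_{+}^{m}$ with $\sum_{i=1}^{m}\boldsymbol{a}(t_{i})\omega_{i}^{\star}=\boldsymbol{c}$ and $\sum_{i=1}^{m}b(t_{i})\omega_{i}^{\star}=\mathrm{val}(D_{m})$.

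Finally I would lift $\boldsymbol{\omega}^{\star}$ to the discrete measure $\omega:=\sum_{i=1}^{m}\omega_{i}^{\star}\,\delta_{t_{i}}\in\Omega$, supported on at most $m\le p$ atoms. By construction $\int_{T}\boldsymbol{a}(t)\,\omega(dt)=\sum_{i=1}^{m}\boldsymbol{a}(t_{i})\omega_{i}^{\star}=\boldsymbol{c}$, so $\omega$ is dual feasible for $(D)$, and its objective value is $\int_{T}b(t)\,\omega(dt)=\mathrm{val}(D_{m})=\mathrm{val}(D)$; hence $\omega\in\mathrm{sol}(D)$. Combined with $\mathrm{val}(P)=\mathrm{val}(D)$, this is precisely strong duality with a finitely supported dual optimizer, as claimed.

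The only point requiring care, and thus the main (if mild) obstacle, is the passage from the discretized dual $(D_{m})$ back to the full dual $(D)$: one must check that a minimizer of the finite program remains optimal — not merely feasible — for the infinite program. This is exactly what the value equality $\mathrm{val}(D_{m})=\mathrm{val}(D)$ furnished by Proposition \ref{prop:LSIP_reducibility} guarantees, since feasibility of the lifted measure for $(D)$ is immediate but optimality hinges on its objective value coinciding with $\mathrm{val}(D)$.
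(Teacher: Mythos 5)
Your proof is correct and follows the same route the paper takes: the paper derives this corollary in one line by noting that Slater's condition implies hypothesis \eqref{e:slater_disc_m} of Proposition \ref{prop:LSIP_reducibility}, exactly as you do. Your additional details (attainment of the finite LP optimum and lifting $\boldsymbol{\omega}^{\star}$ to a discrete measure that is optimal, not merely feasible, for $(D)$) correctly fill in the steps the paper leaves implicit.
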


\section{Main results}\label{sec:Main-results}

\subsection{Optimal measures with finite support}

In this section, we establish general structural results for problems $(\L)$ and $(\U)$ by exploiting their duality to 
linear semi-infinite programs (LSIPs) discussed above.  We show that the optimum are attained by measures with finite support and 
we prove that $(\U)$ is equivalent to a finite dimensional convex optimization problem, which can be solved
in polynomial time.

\comment{ Recall that 
$\mathrm{val}(\cdot)$ and $\mathrm{sol}(\cdot)$ denote the optimal
value and solution set of an optimization problem, respectively.}

\begin{thm}
\label{thm:LSIPexists} If Assumption \ref{as:margin_const}
holds, then there exist (primal) linear semi-infinite programs $(\L^{\prime})$ and $(\U^{\prime})$, whose
dual problems are $(\L)$ and $(\U)$, respectively.  Furthermore, for $(\L^{\prime})$ and $(\U^{\prime})$, we have:


{\rm (i)} Assumption \ref{as:main} is satisfied.

{\rm (ii)} The Slater condition holds.

{\rm (iii)} The optimal values are finite.

{\rm (iv)} Strong duality holds for the pairs $(\L, \L^{\prime})$ and $(\U,\U^{\prime})$.

{\rm (v)} The problems are reducible.

{\rm (vi)} There exists solutions to $(\L)$ and $(\U)$ that are supported on at most $|\mathcal{J}|$ atoms.
\end{thm}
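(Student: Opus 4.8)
The plan is to construct the two primal LSIPs explicitly, verify the structural hypotheses (i)--(iii) by hand, and then read off (iv)--(vi) from Proposition~\ref{prop:LSIP_reducibility} and Corollary~\ref{cor:lsip_sol_p_atoms}. For the upper bound I take $(\U')$ to be exactly the LSIP identified in Section~\ref{sec:background}: on the index set $T=\mathbb S_+$ with $p=|\mathcal J|$, set $\mathbf a_J(t)=\max_{j\in J} t_j$, $b(t)=(\sum_{i=1}^d w_i t_i^\xi)^{1/\xi}$ and $\mathbf c=(c_J)_{J\in\mathcal J}$, so that $(\U')$ is $\inf_{\bfx}\mathbf c^\top\bfx$ subject to $b(t)-\mathbf a(t)^\top\bfx\le 0$ for all $t\in T$, whose Lagrangian dual is precisely $(\U)$ under \eqref{eq:H_constraints}. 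For the lower bound I use the same data but maximize the objective, i.e. $(\L')$ is $\sup_{\bfx}\mathbf c^\top\bfx$ subject to $\mathbf a(t)^\top\bfx\le b(t)$; negating the objective and the constraint functions puts it in the standard $\inf$-form, and a direct computation of its dual returns $(\L)$.

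Verifying (i) is routine: $\mathbb S_+$ is compact in $\mathbb R^d$; each $\mathbf a_J=\max_{j\in J}t_j$ is continuous as a finite maximum of coordinates; and since $0<\xi\le1$, the map $t\mapsto t_i^\xi$ is continuous on $[0,\infty)$, the sum $\sum_i w_i t_i^\xi$ is continuous and stays strictly positive on $\mathbb S_+$ (some coordinate is $\ge 1/d$), so $b$ is continuous. Hence Assumption~\ref{as:main} holds for both programs, and compactness gives a finite $b_{\max}:=\max_{t\in T}b(t)$.

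The one verification with content is Slater's condition (ii). For $(\U')$ I would exhibit the strictly feasible point $\tilde\bfx=\lambda\mathbf 1$ with $\lambda>b_{\max}$: because the singletons $\{1\},\dots,\{d\}$ lie in $\mathcal J$ (Assumption~\ref{as:margin_const}), we have $\mathbf a(t)^\top\mathbf 1=\sum_{J\in\mathcal J}\max_{j\in J}t_j\ge\sum_{j=1}^d t_j=1$ for every $t\in\mathbb S_+$, whence $\mathbf a(t)^\top\tilde\bfx\ge\lambda>b_{\max}\ge b(t)$, i.e. $b(t)-\mathbf a(t)^\top\tilde\bfx<0$ uniformly in $t$ (the strict richness clause of Assumption~\ref{as:margin_const} is what rules out degeneracy and leaves room to dominate $b$). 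For $(\L')$ Slater is immediate from $b>0$: strict feasibility $\mathbf a(t)^\top\tilde\bfx<b(t)$ holds at $\tilde\bfx=\mathbf 0$ since $b(t)>0$ on the compact simplex. This is the step I expect to be the crux, since everything downstream is a citation; the subtlety is making the domination uniform over all $t\in\mathbb S_+$ simultaneously while keeping the sign conventions of the $\inf$/$\sup$ pair straight.

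Finally, the Slater points just built are in particular feasible, so both primals have optimal value $<+\infty$ --- all that Theorem~2.3 in \cite{shapiro:2009} and Proposition~\ref{prop:LSIP_reducibility} require. Invoking Theorem~2.3 in \cite{shapiro:2009} with Assumption~\ref{as:main}, finiteness and Slater yields strong duality for $(\L,\L')$ and $(\U,\U')$, giving (iv); since by \eqref{eq:universal_rhobds_le1} the dual objective $\rho_\bfw$ lies in the finite interval $[\sum_i w_i^{1/\xi},(\sum_i w_i)^{1/\xi}]$ on the feasible set, which is nonempty whenever $\mathbf c$ is consistent (Remark~\ref{rem:consistent-thetas}), the common optimal value is a finite real, which is (iii). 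Reducibility (v) is then Proposition~\ref{prop:LSIP_reducibility}, and Corollary~\ref{cor:lsip_sol_p_atoms} furnishes $\omega\in\mathrm{sol}(D)$ supported on at most $p=|\mathcal J|$ atoms --- i.e. optimal spectral measures for $(\L)$ and $(\U)$ with at most $|\mathcal J|$ atoms, which is (vi).
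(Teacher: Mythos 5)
Your proof is correct and follows essentially the same route as the paper's: the explicit LSIP $(\U^{\prime})$ over $T=\mathbb S_+$, verification of Assumption \ref{as:main} via compactness and continuity, a Slater point of the form $(\text{constant})\times\mathbf 1$, finiteness from feasibility of that point together with the Fr\'echet bounds \eqref{eq:universal_rhobds_le1}, and then Proposition \ref{prop:LSIP_reducibility} and Corollary \ref{cor:lsip_sol_p_atoms} for (iv)--(vi). The only cosmetic difference is the Slater certificate: the paper takes $C_\bfw\mathbf 1$ with $C_\bfw=\max_j w_j^{1/\xi}$ and gets strictness from the richness clause of Assumption \ref{as:margin_const}, whereas your $\lambda\mathbf 1$ with $\lambda>b_{\max}$ needs only the singleton constraints (so your parenthetical crediting the richness clause is not actually what makes your inequality strict).
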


\begin{proof} We consider only problems $(\U)$ and $(\U^\prime)$. The arguments for $(\L)$ and $(\L^{\prime})$ are similar.

Let $p=|\mathcal{J}|$ and $\boldsymbol{c}=(c_{J})_{J\in\mathcal{J}}\in\mathbb{R}_{+}^{p}.$
Define the continuous functions $b:\mathbb{S}_{+}\mapsto\mathbb{R}_{+}$,
$\boldsymbol{a}:\mathbb{S}_{+}\mapsto\mathbb{R}_{+}^{p}$ 
\[
b(\boldsymbol{u})=\left(w_1 u_{1}^{\xi}+\cdots+w_d u_{d}^{\xi}\right)^{1/\xi}
\quad \mbox{ and }\quad
\boldsymbol{a}(\boldsymbol{u})=\left(\max_{j\in J}\{u_{j}\}\right)_{J\in\mathcal{J}}.
\]
Consider the linear semi-infinite program 
\begin{eqnarray} \label{e:U_rho-prime}
(\U^{\prime})\quad\quad\quad\quad\inf_{\boldsymbol{x}\in\mathbb{R}^{p}} &  & \boldsymbol{c}^{\top}\boldsymbol{x} \\
\text{subject to:} &  & b(\boldsymbol{u})-\boldsymbol{a}(\boldsymbol{u})^{\top}\boldsymbol{x}\le0,\ \boldsymbol{u}\in\mathbb{S}_{+}.  \nonumber
\end{eqnarray}

Letting $\mathcal{H}$ denote the space of finite Borel measures on $\mathbb{S}_{+}$, by the Lagrangian
duality theory discussed in Section \ref{sec:LSIP}, it follows that the dual of $(\U^{\prime})$ is 
\begin{eqnarray*}
\quad\quad\quad\quad\sup_{H\in\mathcal{H}} &  & \int_{\mathbb{S}_{+}}\left(w_1 u_{1}^{\xi}+\cdots+w_d u_{d}^{\xi}\right)^{1/\xi}H(d\boldsymbol{u})\\
\text{subject to:} &  & \left\{ \int_{\mathbb{S}_{+}}\max_{j\in J}\{u_{j}\}H(d\boldsymbol{u})=c_{J}\right\} _{J\in\mathcal{J}},
\end{eqnarray*}
which is in fact problem $(\U)$ in \eqref{e:U-bound}. This establishes the desired duality of $(\U)$ to the above LSIP $(\U^\prime)$.

Now, observe that $(\U^\prime)$ satisfies Assumption \ref{as:main}, since $\mathbb{S}_{+}\subset\mathbb{R}^{d}$ is compact and
the functions $b$ and $\boldsymbol{a}$ are continuous on $\mathbb{S}_{+}$. This proves {\em (i)}.

We next show {\em (ii)}. Observe that for all $\boldsymbol{u}\in\mathbb{S}_{+},$ we have
\begin{eqnarray}
b(\boldsymbol{u})&=&\left(w_1 u_{1}^{\xi}+\cdots+w_d u_{d}^{\xi}\right)^{1/\xi}
\le {\Big(} \max_{j=1,\dots,d} w_j^{1/\xi} {\Big)} \sum_{j=1}^{d}u_{j}\nonumber\\ 
&=:& C_\bfw \sum_{j=1}^{d}u_{j} < C_\bfw \boldsymbol{a}(\boldsymbol{u})^{\top}\boldsymbol{1}, 
 \label{eq:1_is_feasible}
\end{eqnarray}
where inequality in \eqref{eq:1_is_feasible} follows from
Assumption \ref{as:margin_const}.  Hence $\tilde{\boldsymbol{x}}\equiv C_\bfw \mathbf{1} \in\mathbb{R}^{p}$
is primal feasible for the LSIP program $(\U^\prime)$ and the Slater condition \eqref{e:Slater-cond} holds.

In view of \eqref{eq:universal_rhobds_le1}, \eqref{e:dual-bound}, and \eqref{eq:1_is_feasible}, 
we obtain
\[
-\infty<{\Big(} \sum_{i=1}^d w_i {\Big)}^{1/\xi} \le \mathrm{val}(\U)
                                                                       \le\mathrm{val}(\U^{\prime})
                                                                       \le C_\bfw\sum_{J\in\mathcal{J}}c_{J}<\infty,
\]
which proves {\em (iii)}.

Finally, by Proposition \ref{prop:LSIP_reducibility} (c.f. Corollary \ref{cor:lsip_sol_p_atoms}), {\em (i)}, {\em (ii)}, and {\em (iii)} are sufficient for {\em (iv)}, {\em (v)}, and {\em (vi)}.
\end{proof}

The fact that Theorem \ref{thm:LSIPexists}\emph{(vi)} implies that the optimal values of $(\L)$ and $(\U)$ can be achieved by measures concentrated on 
at most $|{\cal J}|$ atoms leads to the following characterization of ${\rm val}(\L)$ and ${\rm val}(\U)$.

\begin{thm}
\label{thm:main_redux} Recall the extremal coefficient constraints $\boldsymbol{c} = (c_J)_{J\in\mathcal{J}} \in \mathbb{R}^{|\mathcal{J}|}_+$ in \eqref{eq:H_constraints}
for problems $(\L)$ and $(\U)$. Define the set of non-negative $d\times |{\cal J}|$ matrices
\begin{equation}
\mathcal{A}_{\boldsymbol{c}}:=\left\{ A\in\mathbb{R}_{+}^{d \times |\mathcal{J}|}\, :\, \sum_{K\in\mathcal{J}}\max_{j\in J}\{a_{jK}\}=c_{J},J\in\mathcal{J}\right\} .\label{eq:maxlin_const}
\end{equation}
Then, by letting $f(A):= \sum_{K\in {\cal J}} \left(w_1 a_{1K}^{\xi}+\cdots+w_d a_{dK}^{\xi}\right)^{1/\xi}$, we have
\[
{\rm val }(\L) = \inf_{A\in\mathcal{A}_{\boldsymbol{c}}} f(A) \quad\mbox{ and }\quad {\rm val}(\U)=
\sup_{A\in\mathcal{A}_{\boldsymbol{c}}} f(A).
\]
\end{thm}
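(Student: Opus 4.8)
The plan is to recognize the finite-dimensional optimization over $\mathcal{A}_{\boldsymbol{c}}$ as a mere re-parametrization of the infinite-dimensional problems $(\L)$ and $(\U)$, once Theorem \ref{thm:LSIPexists}\emph{(vi)} tells us that the optima are attained on discrete measures with at most $|\mathcal{J}|$ atoms. Concretely, I would set up a correspondence between $d\times|\mathcal{J}|$ nonnegative matrices $A=(a_{jK})$ and discrete spectral measures $H_A=\sum_{K\in\mathcal{J}}\nu_K\,\delta_{\boldsymbol{u}^{(K)}}$, by decomposing the $K$-th column of $A$ in polar form as $a_{\cdot K}=\nu_K\boldsymbol{u}^{(K)}$ with $\nu_K:=\sum_{j=1}^d a_{jK}=\|a_{\cdot K}\|_1$ and $\boldsymbol{u}^{(K)}:=a_{\cdot K}/\nu_K\in\mathbb{S}_{+}$ (when $\nu_K=0$ the atom carries no mass and may be placed arbitrarily). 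Conversely, any discrete measure on at most $|\mathcal{J}|$ atoms yields such a matrix via $a_{jK}:=\nu_K u^{(K)}_j$, after padding with zero-weight atoms so that the number of columns is exactly $|\mathcal{J}|$.

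The heart of the argument is then two elementary identities, both consequences of the $1$-homogeneity of the maps appearing in the objective and the constraints. Since $\nu_K\ge0$, we have $\nu_K\max_{j\in J}u^{(K)}_j=\max_{j\in J}(\nu_K u^{(K)}_j)=\max_{j\in J}a_{jK}$, so that the extremal-coefficient constraint $\int_{\mathbb{S}_{+}}\max_{j\in J}u_j\,H_A(d\boldsymbol{u})=\sum_{K}\nu_K\max_{j\in J}u^{(K)}_j=c_J$ is precisely the defining relation $\sum_{K\in\mathcal{J}}\max_{j\in J}a_{jK}=c_J$ of $\mathcal{A}_{\boldsymbol{c}}$. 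Likewise, writing $\nu_K=(\nu_K^\xi)^{1/\xi}$ and pushing $\nu_K$ inside the $1/\xi$ power gives $\nu_K\bigl(w_1(u^{(K)}_1)^\xi+\cdots+w_d(u^{(K)}_d)^\xi\bigr)^{1/\xi}=\bigl(w_1 a_{1K}^\xi+\cdots+w_d a_{dK}^\xi\bigr)^{1/\xi}$; summing over $K$ yields $\rho_\bfw(H_A,\xi)=f(A)$. Thus a discrete feasible $H$ with at most $|\mathcal{J}|$ atoms corresponds to some $A\in\mathcal{A}_{\boldsymbol{c}}$ with matching objective value, and conversely.

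With this dictionary in hand, I would close the proof by a pair of inequalities. On one hand, every $A\in\mathcal{A}_{\boldsymbol{c}}$ produces a feasible $H_A$ with $\rho_\bfw(H_A,\xi)=f(A)$; since $H_A$ is admissible for both $(\U)$ and $(\L)$, this gives $f(A)\le{\rm val}(\U)$ and $f(A)\ge{\rm val}(\L)$, whence $\sup_{A}f(A)\le{\rm val}(\U)$ and $\inf_{A}f(A)\ge{\rm val}(\L)$. On the other hand, Theorem \ref{thm:LSIPexists}\emph{(vi)} guarantees that the extrema defining ${\rm val}(\U)$ and ${\rm val}(\L)$ are \emph{attained} by measures supported on at most $|\mathcal{J}|$ atoms; converting such an optimizer $H^\star$ into a matrix $A^\star\in\mathcal{A}_{\boldsymbol{c}}$ yields ${\rm val}(\U)=f(A^\star)\le\sup_A f(A)$ and ${\rm val}(\L)=f(A^\star)\ge\inf_A f(A)$. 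Combining the two directions gives the claimed equalities.

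The step I expect to require the most care is the bookkeeping around degenerate columns and the exact number of atoms: I must verify that zero columns ($\nu_K=0$) are harmless in both directions, since they contribute nothing to $f$, to each $\max_{j\in J}a_{jK}$, and to $\rho_\bfw$, and that using matrices with exactly $|\mathcal{J}|$ columns correctly matches the ``at most $|\mathcal{J}|$ atoms'' conclusion of Theorem \ref{thm:LSIPexists}\emph{(vi)} via padding with null atoms. Beyond this, the homogeneity identities are routine, and no fresh compactness or duality input is needed, since all the analytic heavy lifting was already carried out in Theorem \ref{thm:LSIPexists}.
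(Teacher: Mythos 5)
Your proposal is correct and follows essentially the same route as the paper's proof: both invoke Theorem \ref{thm:LSIPexists}\emph{(vi)} to reduce to measures with at most $|\mathcal{J}|$ atoms and then apply the change of variables $a_{jK}=h_K u_{jK}$, using $1$-homogeneity to match the objective and the constraints. Your version merely makes explicit the two-sided feasibility/attainment argument and the handling of zero-mass atoms, which the paper leaves implicit.
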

\begin{proof}
Theorem \ref{thm:LSIPexists}\emph{(vi)} implies that there exists 
a discretization $(\L^{m})$ with $m\le|\mathcal{J}|$ such that $\mathrm{val}(\L^{m})=\mathrm{val}(\L)<\infty.$
The last statement means that there exist $\boldsymbol{u}_{k}\in \mathbb S_+,h_{k},\ k=1,\ldots,m$
such that 
\begin{eqnarray}
\mathrm{val}(\L)= &  & \inf_{{\boldsymbol{u}_{k}\in\mathbb{S}_{+}\atop h_{k}\ge0}}\sum_{k=1}^{m}\left(w_1 u_{1k}^{\xi}+\cdots+w_d u_{dk}^{\xi}\right)^{1/\xi}h_{k}
\nonumber\\
 &  & \mathrm{subject\ to}:\ \left\{ \sum_{k=1}^{m}\max_{j\in J}\{u_{jk}\}h_{k}=c_{J}\right\} _{J\in\mathcal{J}}.\label{e:linear_program_for_I}
\end{eqnarray}
making the change of variables $a_{jk}=u_{jk}h_{k}$ gives 
\begin{eqnarray*}
\mathrm{val}(\L)= &  & \inf_{a_{jk}\ge0}\sum_{k=1}^{m}\left( w_1 a_{1k}^{\xi}+\cdots+w_d a_{dk}^{\xi}\right)^{1/\xi}\\
\text{} &  & \mathrm{subject\ to}:\ \left\{ \sum_{k=1}^{m}\max_{j\in J}\{a_{jk}\}=c_{J}\right\} _{J\in\mathcal{J}}.
\end{eqnarray*}
Thus we have proved the result for $(\L).$ The proof for
$(\U)$ follows by replacing $\sup_{A\in\mathcal{A}_{\boldsymbol{c}}} f(A)$ with $\inf_{A\in\mathcal{A}_{\boldsymbol{c}}} -f(A)$.
\end{proof}
The consequence of Theorem \ref{thm:main_redux} is that the linear semi-infinite optimization problems $(\L)$ and $(\U)$
may be reduced to finite yet \emph{non-linear} optimization problems.  Fundamentally, there is tradeoff between linearity in the semi-infinite case, 
versus non-linearity in the finite case, amounting to having to search for the \emph{finite support} of the optimal measures in $\mathrm{sol}(\L)$
and $\mathrm{sol}(\U)$.   This is because both the objective function and the constraints now depend on the unknown set of support points $T_m 
:= \{ {\bf u}_1,\cdots, {\bf u}_m\}$, in Relation \eqref{e:linear_program_for_I} in a non-linear fashion.  Note however that the size of the unknown
support $T_m$ is no greater than the number of constraints, which is one of the appealing consequences of  Theorem \ref{thm:LSIPexists} (vi).

In the case of $(\U)$, $\xi < 1$ implies that $-f(A)$ is a convex function. This, together with the fact that
$\mathcal{A}_{\boldsymbol{c}}$ is a convex set means that $\inf_{A\in\mathcal{A}_{\boldsymbol{c}}}-f(A)$
is a {\em convex optimization problem}.  Hence
$\inf_{A\in\mathcal{A}_{\boldsymbol{c}}}-f(A)$ can be solved to within arbitrary precision in polynomial 
time\cite{cvx_book}. In-practice, an exact and efficient solver for $\inf_{A\in\mathcal{A}_{\boldsymbol{c}}}-f(A)$ still needs to be developed 
and is outside the scope of this work. 

In the case of $(\L)$, $\xi < 1$ implies $\inf_{A\in\mathcal{A}_{\boldsymbol{c}}}f(A)$ is non-convex and 
generally more challenging.  However, if one makes a further assumption of {\em balanced portfolio}, i.e.
the weights in $f$ are equal $w_1= w_2 = \cdots = w_d = 1$, then further solutions are readily available
as discussed in the following section.

\subsection{Solutions for balanced portfolia}
\label{sec:closed_form}
In this section, we provide further structural results and closed form solutions in the important 
special case of {\em balanced portfolia}, where $\bfw = {\bf 1}$:
 \begin{equation}\label{e:balanced-weights}
 w_1= w_2 = \cdots = w_d = 1.
 \end{equation}
 
 \begin{rem}
 Under assumption \eqref{e:balanced-weights}, the universal dependence bounds for extreme VaR ${\cal X}_{(S,X_1)} = \rho_{\bf 1}^\xi$ given by
 \eqref{eq:universal_rhobds_le1} simplify to 
 $$
 d^\xi \le {\cal X}_{(S,X_1)} \le d.
 $$
 \end{rem}
 We show first that the minimization problem $(\L)$ reduces to a standard linear program.
 Interestingly, ${\rm val}(\L)$ is attained by spectral measures corresponding to the celebrated Tawn-Molchanov 
 max-stable models \cite{strokorb2015}.  This leads to efficient and exact solutions in practice for moderate number of constraints and dimensions.
 
 The second contribution are {\em exact formulae} for both the lower and upper bounds on $\rho:= \rho_{\bf 1}$ in the
 case when we impose only one constraint on the $d$-variate extremal coefficient
 $$
 \vartheta =  \vartheta_{\mathbf{X}} (\{1,\ldots,d\}),
 $$ 
 in addition to the standard marginal extremal coefficient constraints.   These results are possible thanks to the symmetry in the objective function when all portfolio
  weights are equal. Their proofs are given in Appendix \ref{sec:Proofs}.
 
\medskip

\begin{thm}[Tawn-Molchanov Minimizer]
\label{thm:sol_L_linprog} Under assumption \eqref{e:balanced-weights}, we have
\begin{eqnarray}
\mathrm{val}(\L) & = & 
\inf_{\boldsymbol{\beta}\in\mathbb{R}_{+}^{2^{d}-1}}\sum_{J\subset\{1,\ldots,d\},\ J\not = \varnothing}|J|^{1/\xi}\beta_{J} \label{eq:TM}\\
 &  & \mathrm{subject\ to}:\left\{ \sum_{K\subset\{1,\ldots,d\},\ K\not = \varnothing}\mathbb{I}\left\{ (K\cap J)\not=\varnothing\right\} \beta_{K}=c_{J}\right\} _{J\in\mathcal{J}}.\nonumber 
\end{eqnarray}
\end{thm}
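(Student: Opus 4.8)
The plan is to deduce the statement from the finite-support reduction already established in Theorem~\ref{thm:main_redux}, by exhibiting an explicit decomposition of atoms onto the Tawn--Molchanov support that preserves every constraint while not increasing the objective. Specializing Theorem~\ref{thm:main_redux} to the balanced case $\bfw=\mathbf 1$ gives $\mathrm{val}(\L)=\inf_{A\in\mathcal A_{\boldsymbol c}}f(A)$ with $f(A)=\sum_{K}\big(\sum_{i=1}^d a_{iK}^{\xi}\big)^{1/\xi}$, where each column $\mathbf a=h\bfu$ of $A$ is an unnormalized atom. I would first recognize the program \eqref{eq:TM} as the \emph{same} minimization restricted to measures supported on the Tawn--Molchanov atoms $\bfu_K:=\mathbf 1_K/|K|$, $\varnothing\neq K\subseteq\{1,\dots,d\}$. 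Writing such a measure as $H=\sum_K\gamma_K\delta_{\{\bfu_K\}}$ and substituting, one computes $\max_{j\in J}(\bfu_K)_j=|K|^{-1}\mathbb I\{K\cap J\neq\varnothing\}$ and $\big(\sum_i(\bfu_K)_i^{\xi}\big)^{1/\xi}=|K|^{1/\xi-1}$; hence under the reparametrization $\beta_K:=\gamma_K/|K|$ the extremal-coefficient constraints become $\sum_K\mathbb I\{K\cap J\neq\varnothing\}\beta_K=c_J$ and the objective becomes $\sum_K|K|^{1/\xi}\beta_K$, which is exactly \eqref{eq:TM}.

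One inequality is then immediate: since every Tawn--Molchanov measure is admissible for $(\L)$, and the program \eqref{eq:TM} is an infimum over this subclass of feasible measures, $\mathrm{val}(\L)$ is at most the value of \eqref{eq:TM}.

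For the reverse inequality I would show that any admissible atom can be replaced, at no greater cost and without altering a single constraint, by a nonnegative combination of Tawn--Molchanov atoms. Fix a column $\mathbf a=(a_1,\dots,a_d)\ge\mathbf 0$ of a feasible $A$, choose a permutation with $a_{\pi(1)}\ge\cdots\ge a_{\pi(d)}$, set $a_{\pi(d+1)}:=0$, and let $S_k:=\{\pi(1),\dots,\pi(k)\}$ index the $k$ largest coordinates. Decompose $\mathbf a=\sum_{k=1}^d(a_{\pi(k)}-a_{\pi(k+1)})\mathbf 1_{S_k}$ into nested layers, each a scaled indicator, i.e.\ a Tawn--Molchanov atom of type $S_k$. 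The crucial identity is that every extremal-coefficient functional is preserved exactly: for each $J$, the quantity $\sum_{k}(a_{\pi(k)}-a_{\pi(k+1)})\mathbb I\{S_k\cap J\neq\varnothing\}$ telescopes (the indicator switches on precisely when $k$ reaches the smallest rank of an element of $J$) to $\max_{j\in J}a_j$. Summing over all columns thus produces a Tawn--Molchanov measure, with masses accumulated over all subsets $K$ arising as some $S_k$, that has identical extremal coefficients $c_J$.

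It remains to control the objective, which is the only genuinely nontrivial point. The layered objective from this column equals $\sum_{k}(a_{\pi(k)}-a_{\pi(k+1)})\,k^{1/\xi}$, and I must bound it above by the original contribution $\big(\sum_i a_i^{\xi}\big)^{1/\xi}$. Since the layers are nonnegative vectors summing to $\mathbf a$ and the layer of type $S_k$ has functional value $(a_{\pi(k)}-a_{\pi(k+1)})\,|S_k|^{1/\xi}$, this is exactly the superadditivity (reverse Minkowski) inequality $\big(\sum_i(\sum_k v^{(k)}_i)^{\xi}\big)^{1/\xi}\ge\sum_k\big(\sum_i (v^{(k)}_i)^{\xi}\big)^{1/\xi}$ for nonnegative vectors $v^{(k)}$ and $0<\xi\le1$, with equality at $\xi=1$. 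This is precisely where the finite-mean hypothesis $\xi\le1$ enters essentially. Granting it, each feasible $A$ yields a feasible Tawn--Molchanov measure of objective $\le f(A)$; minimizing over $A$ and using the reparametrization shows that the value of \eqref{eq:TM} is at most $\mathrm{val}(\L)$, and combined with the previous paragraph this gives equality. I expect the reverse Minkowski inequality to be the main obstacle to state cleanly, though it follows in one line from concavity of $t\mapsto t^{\xi}$; everything else is bookkeeping built on Theorem~\ref{thm:main_redux}.
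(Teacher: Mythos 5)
Your argument is correct, but it is not the route the paper takes. The paper proves the theorem by duality: it first conditions on a \emph{complete} vector of extremal coefficients $\boldsymbol{\vartheta}$ consistent with $\boldsymbol{c}$, and then solves the inner problem exactly (Lemma \ref{lem:Istar_opt}) by exhibiting an explicit dual-optimal point $\tilde x_J=\sum_{L\subset J}(-1)^{|L|+1}|J^c\cup L|^{1/\xi}$ and verifying the KKT conditions of Proposition \ref{prop:KKT_prime}; this requires the combinatorial identity of Lemma \ref{lem:complete_alternation} and the M\"obius inversion result of Schlather--Tawn to certify dual feasibility and complementary slackness. Your proof replaces all of that machinery with a direct rearrangement: starting from the finite-support reduction of Theorem \ref{thm:main_redux}, you split each unnormalized atom $\mathbf a$ into its nested layers $(a_{\pi(k)}-a_{\pi(k+1)})\mathbf 1_{S_k}$, observe that every constraint value $\max_{j\in J}a_j$ telescopes exactly, and invoke reverse Minkowski to show the objective does not increase. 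Notably, the analytic core is identical in both proofs --- the paper's primal-feasibility step \eqref{eq:reverse-minkowski_198} is precisely the same layer-cake decomposition (in order-statistic form) followed by the same superadditivity inequality for $0<\xi\le 1$ --- but you use it as a constructive ``projection'' of an arbitrary feasible measure onto the Tawn--Molchanov support, whereas the paper uses it only to verify a duality certificate. What your approach buys is a shorter, more elementary and self-contained proof that makes the role of the Tawn--Molchanov atoms transparent and avoids the consistency/M\"obius-inversion input entirely; what the paper's approach buys is the closed-form optimal value $\sum_K|K|^{1/\xi}\beta_K$ for a fully specified $\boldsymbol{\vartheta}$ together with an explicit optimality certificate $\tilde{\boldsymbol{x}}$, which is reused in the proof of Theorem \ref{thm:dvariate-lower}. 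The one point you should state carefully when writing this up is that Theorem \ref{thm:main_redux} only gives an infimum that need not be attained, so the comparison must be run along a minimizing sequence of matrices $A$; this costs an $\varepsilon$ but nothing more.
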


\noindent
This result shows that obtaining the lower bound for extreme VaR in the case of a balanced portfolio amounts to solving a high-dimensional 
but standard linear program. 

\begin{rem}\label{rem:thm:sol_L_linprog} From the proof of Theorem \ref{thm:sol_L_linprog}, it follows that the lower bounds for extreme VaR in balanced portfolia 
are attained by spectral measures supported on the set of vectors 
$$
\left\{ |J|^{-1} (\boldsymbol{1}_{J}(j))_{j=1}^{d}:J\subset\{1,\ldots,d\}\right\} \subset\mathbb{S}_{+}.
$$
Such types of spectral measures correspond to the \emph{Tawn-Molchanov} max-stable model \cite{strokorb2015}.  
This is an interesting finding since, as shown in the last reference, the Tawn-Molchanov max-stable models are maximal
elements with respect to the lower orthant stochastic order, for the set of all max-stable distributions sharing a fixed set of 
extremal coefficients.  Theorem \ref{thm:sol_L_linprog}, however, is not a consequence
of the lower orthant order dominance and its proof is based on optimization results.
\end{rem}

\begin{rem}[terminology] According to a personal communication with Dr.\ Kirstin Strokorb, the Tawn-Molchanov or more completely Schlather-Tawn-Molchanov 
max-stable model originates in the works of \cite{schlather:tawn:2002} and \cite{molchanov:2008}.  In the work \cite{molchanov:strokorb:2016} it is shown to
arise from a Choquet integral and is therefore more descriptively referred to as a {\em Choquet max-stable model}.
\end{rem}

%
%
{\bf Closed form solutions.} Next, we focus on the case of a single constraint, involving the extremal coefficient 
associated with the entire set $D=\{1,\ldots,d\}.$ That is, the extremal coefficient constraints \eqref{eq:H_constraints}
in $(\L)$ and $(\U)$ are given by 
\begin{equation}\label{e:single-d-constraint}
\mathcal{J}  = \mathcal{J}_d :=  \{\{1\},\{2\},\ldots,\{d\},D\}\quad\mbox{ and }\quad
\boldsymbol{c}  = \boldsymbol{c}_\vartheta := (1,1,\ldots,1,\vartheta)\in\mathbb{R}_{+}^{d+1},
\end{equation}
where $\vartheta = \vartheta_{\mathbf{X}}(D) \in [1,d]$.
The following results show that in this special case, exploiting the
symmetry in the constraints yields {\em closed form solutions} for both ${\rm val}(\L)$ and ${\rm val}(\U)$.

\begin{thm}[lower bounds]
\label{thm:dvariate-lower} Let $B_{k}=[d(k+1)^{-1},dk^{-1})$, $k=1,\ldots , d-1.$  Under assumption \eqref{e:single-d-constraint},
for all  $\vartheta \in [1,d]$, we have that ${\rm val}(\L)$ is given by the piecewise linear function:
\begin{equation}
{\rm val}(\L) = L(\vartheta) := \sum_{k=1}^{d-1} \boldsymbol{1}_{B_{k}}(\vartheta)\left\{ \frac{k{}^{1/\xi-1}-(k+1)^{1/\xi-1}}{k^{-1}-(k+1)^{-1}}\left(\vartheta-\frac{d}{k+1}\right)+d(k+1)^{1/\xi-1}\right\}.
\label{e:d-variate-lower}
\end{equation}
 
\end{thm}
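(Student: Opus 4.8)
The plan is to start from the finite linear program for $\mathrm{val}(\L)$ provided by Theorem~\ref{thm:sol_L_linprog}, specialize it to the single $d$-variate constraint set $\mathcal J_d$ of \eqref{e:single-d-constraint}, and then exploit the permutation symmetry of these constraints to collapse it to a two-constraint linear program over the integers $\{1,\dots,d\}$, whose value I can identify by a convexity argument.

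First I would write out the constraints of the Tawn--Molchanov program for $\mathcal J_d$. For a singleton $J=\{i\}$ the constraint reads $\sum_{K\ni i}\beta_K=1$, whereas for $J=D$ every nonempty $K$ meets $D$, so the $d$-variate constraint is simply $\sum_{\emptyset\neq K}\beta_K=\vartheta$; the objective is $\sum_{\emptyset\neq J}|J|^{1/\xi}\beta_J$. Since the objective and the whole constraint system are invariant under the symmetric group acting by permuting coordinates, averaging any feasible $\boldsymbol\beta$ over its orbit yields a symmetric feasible point with no larger (linear, invariant) objective value. Hence there is an optimal $\boldsymbol\beta$ with $\beta_K$ depending only on $|K|$; collecting the total mass $n_k:=\sum_{|K|=k}\beta_K\ge0$ on the $k$-element sets and using $\binom{d-1}{k-1}=\tfrac{k}{d}\binom{d}{k}$ turns the program into
\[
\min_{n_k\ge 0}\ \sum_{k=1}^d k^{1/\xi}n_k
\quad\text{subject to}\quad
\sum_{k=1}^d n_k=\vartheta,\qquad \sum_{k=1}^d k\,n_k=d .
\]

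The key step is to solve this two-constraint LP. Setting $\mu:=d/\vartheta$ and $p_k:=n_k/\vartheta$, the constraints say $p$ is a probability vector on $\{1,\dots,d\}$ with mean $\mu$, and the objective equals $\vartheta\,\mathbb E_p[g]$ with $g(k)=k^{1/\xi}$. Because $0<\xi\le1$ gives $1/\xi\ge1$, the function $g$ is convex, so its piecewise-linear interpolant $\hat g$ through the integer nodes is convex and agrees with $g$ at every integer. Jensen's inequality for $\hat g$ then gives $\mathbb E_p[g]=\mathbb E_p[\hat g]\ge\hat g(\mu)$, with equality precisely when $p$ is supported on the two integers $k,k+1$ straddling $\mu$. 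Thus $\mathrm{val}(\L)=\vartheta\,\hat g(\mu)$, attained by a spectral measure concentrated on adjacent integers.

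Finally I would translate $\mu=d/\vartheta\in(k,k+1]$ into $\vartheta\in B_k=[d(k+1)^{-1},dk^{-1})$ (these intervals cover $[1,d)$, and $\vartheta=d$ follows by continuity), and substitute $\hat g(\mu)=k^{1/\xi}(k+1-\mu)+(k+1)^{1/\xi}(\mu-k)$ into $\vartheta\,\hat g(d/\vartheta)$. Expanding and using $k^{-1}-(k+1)^{-1}=[k(k+1)]^{-1}$ turns the coefficient of $\vartheta$ into the stated slope and pins the constant term $d(k+1)^{1/\xi-1}$ via the value at the left endpoint $\vartheta=d(k+1)^{-1}$, recovering $L(\vartheta)$ in \eqref{e:d-variate-lower}. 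I expect the main obstacle to be the conceptual core rather than the algebra: namely the symmetrization step (producing a symmetric optimizer) together with the convex-envelope/Jensen argument that forces the minimizing distribution onto two adjacent integers; once these are in place, the endpoint matching and closing computation are routine.
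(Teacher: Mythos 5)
Your proposal is correct, and it reaches the same optimal (Tawn--Molchanov) measure -- symmetric weights spread over the $k$- and $(k+1)$-element subsets with $d/(k+1)\le\vartheta<d/k$ -- but it organizes the optimality argument differently from the paper. The paper keeps the full linear program of Theorem \ref{thm:sol_L_linprog} and certifies optimality by explicitly exhibiting a primal--dual pair: a dual vector $\boldsymbol{\beta}$ supported on sets of sizes $k$ and $k+1$, and a primal vector $\mathbf x$ whose feasibility is checked via the secant-line inequality $|K|^{1/\xi}\ge \frac{(k+1)^{1/\xi}-k^{1/\xi}}{1}(|K|-k)+k^{1/\xi}$ for integers $|K|\notin(k,k+1)$; matching the two objective values then gives $L(\vartheta)$. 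You instead symmetrize first -- averaging over the permutation orbit preserves feasibility and the (linear, invariant) objective, so an optimizer may be taken to depend only on $|K|$ -- which collapses the problem to the two-constraint LP $\min\sum_k k^{1/\xi}n_k$ subject to $\sum_k n_k=\vartheta$, $\sum_k k\,n_k=d$, solved by Jensen applied to the convex piecewise-linear interpolant of $k\mapsto k^{1/\xi}$. The underlying convexity fact is the same in both proofs (your Jensen step is exactly the dual certificate the paper verifies as primal feasibility), but your reduction makes the structure of the optimum -- a mean-constrained distribution forced onto two adjacent integers -- transparent without having to guess $\mathbf x$ in advance, at the cost of an extra (easy) symmetrization lemma that the paper avoids. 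Two small points to keep clean: the reduced LP has a compact feasible set so the minimum is attained, which your equality-case discussion implicitly needs; and, as you note, $\vartheta=d$ lies outside $\bigcup_k B_k$ and must be handled by continuity (all mass on singletons, value $d$), a boundary case the paper's indicator-sum formula also glosses over.
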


\begin{thm}[upper bounds]
\label{thm:d-variate-upper}
 Under assumption \eqref{e:single-d-constraint}, for all $\vartheta \in [1,d]$, we have
\begin{align}
{\rm val}(\U) = U(\vartheta) := \left\{ \vartheta^{\xi}+(d-1)^{1-\xi}(d-\vartheta)^{\xi}\right\} ^{1/\xi}\equiv \sup_{\boldsymbol{u}
\in\mathbb{S}_{+}}\left\{ d\left(\sum_{j=1}^{d}u_{j}^{\xi}\right)^{1/\xi}:\max_{j\in D}\{u_{j}\}=\frac{\vartheta}{d}\right\}.
\label{e:d-variate-upper}
\end{align}
 \end{thm}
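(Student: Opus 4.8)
The plan is to sandwich $\mathrm{val}(\U)$ between an explicit primal (measure) construction and an explicit dual (vector) construction, both evaluating to $U(\vartheta)$, and to close the gap using the strong duality from Theorem \ref{thm:LSIPexists}. Under \eqref{e:single-d-constraint} the dual LSIP $(\U^{\prime})$ of \eqref{e:U_rho-prime} has index set $T=\mathbb{S}_{+}$, $p=d+1$, objective $\sum_{j=1}^{d}x_{j}+\vartheta x_{d+1}$, and the single family of constraints $(\sum_{j=1}^{d}u_{j}^{\xi})^{1/\xi}\le\sum_{j=1}^{d}x_{j}u_{j}+x_{d+1}\max_{j}u_{j}$ for all $\boldsymbol{u}\in\mathbb{S}_{+}$; Theorem \ref{thm:LSIPexists}(iv) gives $\mathrm{val}(\U)=\mathrm{val}(\U^{\prime})$ with both solvable. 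So it suffices to produce a feasible $H$ with $\rho(H)=U(\vartheta)$ and a feasible $\boldsymbol{x}$ with objective $U(\vartheta)$.

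I would first settle the inner identity (the ``$\equiv$''). Maximizing $d(\sum_{j}u_{j}^{\xi})^{1/\xi}$ over $\{\boldsymbol{u}\in\mathbb{S}_{+}:\max_{j}u_{j}=\vartheta/d\}$ is, after pinning one coordinate at $\vartheta/d$, the problem of maximizing $\sum_{j\ge2}u_{j}^{\xi}$ subject to $\sum_{j\ge2}u_{j}=1-\vartheta/d$ and $u_{j}\le\vartheta/d$. Since $x\mapsto x^{\xi}$ is concave, Jensen forces the maximizer to be $\boldsymbol{u}^{*}=(\vartheta/d,t,\ldots,t)$ with $t=(d-\vartheta)/(d(d-1))$, which is feasible precisely because $\vartheta\ge1$ yields $t\le\vartheta/d$; a direct computation gives $d(\sum_{j}(u^{*}_{j})^{\xi})^{1/\xi}=\{\vartheta^{\xi}+(d-1)^{1-\xi}(d-\vartheta)^{\xi}\}^{1/\xi}=U(\vartheta)$. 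For the lower bound $\mathrm{val}(\U)\ge U(\vartheta)$ I would symmetrize: let $\boldsymbol{u}^{(i)}$ be $\boldsymbol{u}^{*}$ with its maximal entry moved to coordinate $i$, and set $H^{*}=\sum_{i=1}^{d}\delta_{\boldsymbol{u}^{(i)}}$. Summing over $i$ shows each marginal $\int u_{j}\,H^{*}(d\boldsymbol{u})=\vartheta/d+(d-1)t=1$, the $d$-variate constraint $\int\max_{j}u_{j}\,H^{*}(d\boldsymbol{u})=\sum_{i}\vartheta/d=\vartheta$, and $\rho(H^{*})=d\,(\sum_{j}(u^{*}_{j})^{\xi})^{1/\xi}=U(\vartheta)$, so $H^{*}$ is feasible for $(\U)$ and attains the target.

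For the matching upper bound I would exploit the permutation symmetry of the constraints and take a symmetric dual vector $\boldsymbol{x}=(\alpha,\ldots,\alpha,\beta)$. Using $\sum_{j}u_{j}=1$, the dual feasibility constraint collapses to the single scalar inequality $\phi(\boldsymbol{u}):=(\sum_{j}u_{j}^{\xi})^{1/\xi}\le\alpha+\beta\max_{j}u_{j}$ for all $\boldsymbol{u}\in\mathbb{S}_{+}$, while the objective becomes $d\alpha+\vartheta\beta$. Writing $M(s):=\sup\{\phi(\boldsymbol{u}):\boldsymbol{u}\in\mathbb{S}_{+},\ \max_{j}u_{j}=s\}=d^{-1}U(ds)$ (the level-set maximum computed above, valid for $s\in[1/d,1]$), the inequality $\phi(\boldsymbol{u})\le\alpha+\beta\max_{j}u_{j}$ is equivalent to $M(s)\le\alpha+\beta s$ on $[1/d,1]$. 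I would choose $\alpha,\beta$ to be the coefficients of the tangent line to $M$ at $s^{*}=\vartheta/d$, i.e.\ $\beta=M'(s^{*})$ and $\alpha=M(s^{*})-s^{*}M'(s^{*})$; then $d\alpha+\vartheta\beta=dM(s^{*})=U(\vartheta)$ (using $ds^{*}=\vartheta$), so this $\boldsymbol{x}$ has exactly the right objective value, and feasibility reduces to the statement that the tangent line lies above $M$ on the whole interval, which holds provided $M$ (equivalently $U$) is concave.

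The hard part is this concavity. The clean route, which I would take, is to recognize the algebraic structure $U(\vartheta)=\bigl\|\bigl(\vartheta,\ (d-1)^{(1-\xi)/\xi}(d-\vartheta)\bigr)\bigr\|_{\xi}$, i.e.\ $U$ is the $\ell_{\xi}$-gauge $\boldsymbol{v}\mapsto(v_{1}^{\xi}+v_{2}^{\xi})^{1/\xi}$ evaluated along the affine curve $\vartheta\mapsto(\vartheta,(d-1)^{(1-\xi)/\xi}(d-\vartheta))$ in $\mathbb{R}_{+}^{2}$. For $0<\xi\le1$ the $\ell_{\xi}$-gauge is positively homogeneous and superadditive on the positive orthant (reverse Minkowski), hence concave there; this is the very same concavity that renders $-f$ convex in the discussion following Theorem \ref{thm:main_redux}. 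Since the composition of a concave function with an affine map is concave, $U$ is concave on $[1,d]$, so $M$ is concave on $[1/d,1]$ and its tangent line at $s^{*}$ dominates it. This makes $\boldsymbol{x}$ dual feasible, giving $\mathrm{val}(\U^{\prime})\le U(\vartheta)$. Combining with strong duality and the primal construction yields $\mathrm{val}(\U)=\mathrm{val}(\U^{\prime})\le U(\vartheta)\le\mathrm{val}(\U)$, forcing equality. The only points needing mild extra care are the endpoints $\vartheta\in\{1,d\}$, where $s^{*}$ is a boundary point and one uses the one-sided tangent; these correspond exactly to the Hoeffding--Fr\'echet extremes and can be checked directly against \eqref{eq:universal_rhobds_le1}.
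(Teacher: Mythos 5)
Your proposal is correct and follows essentially the same route as the paper: both exhibit the symmetric $d$-atom measure $\sum_i \delta_{\boldsymbol{u}^{(i)}}$ as a feasible point of $(\U)$ and the tangent-line vector $(\alpha,\ldots,\alpha,\beta)$ with $\alpha=M(s^*)-s^*M'(s^*)$, $\beta=M'(s^*)$ as a feasible point of $(\U')$, match the two objective values at $U(\vartheta)$, and close the gap by strong duality. The only differences are in sub-steps — you obtain the level-set maximum by Jensen rather than Lagrange multipliers, and you derive the concavity of $U$ from superadditivity of the $\ell_\xi$-gauge (reverse Minkowski) rather than by direct differentiation as in the paper's Lemma~\ref{l:ua} — and you are right that the endpoint $\vartheta=d$, where $U'$ is infinite for $\xi<1$, needs the separate one-sided check you mention.
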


%
%

%

The bounds in \eqref{e:d-variate-lower} and \eqref{e:d-variate-upper}
can be computed for arbitrary dimension and all tail index values $\xi \in (0,1]$. 
The results shown in Figure \ref{fig:d-variate}
show that the information about extreme VaR provided by a single d-variate extremal coefficient increases with the tail index $\xi$
and decreases with dimension $d$.  More concretely, computing the maximum width of the bounds 
$\sup_{\vartheta\in[1,d]} |\mathrm{val}(\U)^\xi - \mathrm{val}(\L)^\xi|$ using the 
closed form solutions and comparing to the width of the universal dependance bounds $|d - d^\xi|$ 
allow us to show that even in the high-dimensional setting of $d=100$,
with realistic tail exponent $\xi = 0.7$, the knowledge of a single $d$-variate
extremal coefficient always reduces the range of uncertainty of extreme VaR by at least $29\%$. This is a remarkable 
fact given that no other assumptions on the asymptotic dependence are imposed.

\section{Applications} \label{sec:applications}

 The goal of this section is to briefly illustrate the theoretical structural results as well as closed-form formulae 
 established above.  We start with a quantitative example of a $10$-dimensional industry portfolio, where the bi-variate
 constraints are estimated from data.  Then, in Section \ref{sec:mkt-sec}, we demonstrate how extremal coefficient constraints
 can be used to encode qualitative structural information and arrive at practical closed-form formulae.
 
\subsection{An illustration: Scale-balanced industry portfolia} \label{sec:industry-portfolia}

In this section, we briefly sketch an application of the above general results using a $d=10$-dimensional
portfolio of daily returns for $10$ industries available in \cite{french-data:2018}. The portfolio is obtained by assigning 
each of the stocks in NYSE, AMEX, and NASDAQ to one of the ten industries and then their average is computed.  Then,
a vector time-series of daily returns in percent are computed.  We shall focus on the vector time-series $\bfX_t = (X_t(j))_{j=1}^d$ 
of losses (negative returns) and study their extreme value-at-risk.   We first argue that it is reasonable to model the (multivariate) 
marginal distribution as regularly varying.  To this end, we briefly recall the standard peaks-over-threshold methodology used to 
estimate the tail index and scale of the losses.  

Let the random variable $X$ represent the loss of an asset.  The Pickands-Balkema-de Haan Theorem (see e.g.\ Theorem 3.4.13 
and page 166 in \cite{embrechts:kluppelberg:mikosch:1997}) implies that under general conditions, there exist normalizing constants $\sigma(u)>0$, such that
$$
\P\left( \frac{X-u}{\sigma(u)} > x\, {\Big\vert}\, X>u\right) \to (1+\xi x )_+^{-1/\xi},
$$
as $u\to x^*$, where $x^*:= \sup \{ x\, :\, \P(X> x) >0\}\in (-\infty,+\infty]$ is the upper end-point of the distribution of $X$.  Here $\xi \in \R$ is 
a shape parameter referred to as the tail index and $(x)_+:= \max(0,x)$.  This result suggests  that the conditional distribution 
of the distribution of the excess $X-u$ over a large threshold $u$ can be approximated with the so-called Generalized Pareto (GP) distribution, i.e.,
$$
\P( X-u > x | X >u) \approx \left(1 + \xi \frac{x}{\sigma(u)}\right )_+^{-1/\xi}.
$$
The case $\xi>0$ corresponds to heavy, power-law tails; $\xi=0$ (interpreted by continuity) is the Exponential distribution 
and $\xi<0$ is a distribution with bounded right tail.  In practice, one picks a large threshold $u$, focuses on the part of the sample exceeding
$u$, and estimates the tail index $\xi$ and scale parameter $\sigma = \sigma(u)$ via maximum likelihood applied to the excesses. (In the presence of
significant temporal dependence, extremes tend to cluster, i.e., losses occur in batches.  In this case, an important methodological step is to de-cluster the 
exceedances, i.e., to pick one observation from each cluster or otherwise reduce the dependence (see, e.g., \cite{chavez-demoulin:davison:2012}).  In our case, declustering had virtually no effect on the estimates. 

Table \ref{tab:tail-and-scale} shows the tail index and scale estimates along with their standard errors for each of the $10$ industries. They were obtained by
fitting a GP model via the method of maximum likelihood to the excesses over the $0.98$th empirical quantile, for each of the $10$ daily loss time series.
The first important observation is that all losses are heavy tailed, where the tail index estimates are not significantly different.  Indeed, the p-value of a chi-square test 
for equality of means applied to the $10$ tail index estimates (assuming normal approximation) is $0.81$.  On the other hand, the scales are significantly different 
with p-value $1.7\times 10^{-12}$.   While these marginal estimators are dependent and the chi-square test is likely to be conservative.   Therefore, with some confidence
we can assume that the daily losses have a common tail index $\xi$ and are multivariate regularly varying.  Furthermore, the GP tail asymptotics entail
 \begin{equation}\label{e:GP-tail}
 \P( X_t(j) > x) \sim p_0 \left(\frac{\sigma_j}{\xi}\right)^{1/\xi} x^{-1/\xi},\ \ \mbox{ as }x\to\infty,
 \end{equation}
 where $p_0:= 1-0.98 = 0.02$. 
\begin{table}[ht]
\centering
\caption{\label{tab:tail-and-scale} Tail index and scale estimates based on a MLE of a GPD model to 
peaks over the $0.98$-th marginal quantiles of industry losses.}
\begin{tabular}{rrrrr}
  \hline
 & $\widehat\xi$ & ${\rm s.e.}({\widehat\xi})$ & $\widehat\sigma$ & ${\rm s.e.}(\widehat \sigma)$ \\ 
  \hline
NoDur & 0.21 & 0.06 & 0.77 & 0.05 \\ 
  Durbl & 0.18 & 0.06 & 1.32 & 0.10 \\ 
  Manuf & 0.22 & 0.06 & 1.06 & 0.08 \\ 
  Enrgy & 0.19 & 0.05 & 1.05 & 0.07 \\ 
  HiTec & 0.13 & 0.05 & 1.29 & 0.09 \\ 
  Telcm & 0.22 & 0.05 & 0.84 & 0.06 \\ 
  Shops & 0.20 & 0.06 & 0.92 & 0.07 \\ 
  Hlth & 0.25 & 0.06 & 0.92 & 0.07 \\ 
  Utils & 0.14 & 0.05 & 1.21 & 0.08 \\ 
  Other & 0.14 & 0.06 & 1.25 & 0.09 \\ 
   \hline
\end{tabular}
\end{table}

 In order to apply our closed-form solutions from Section \ref{sec:closed_form}, we consider the {\em balanced} portfolio 
 $$ 
 S_t := \sum_{j=1}^d w_j X_t(j),\ \ \mbox{ with } w_j\propto \frac{1}{\widehat \sigma_j},
$$
where $\sum_{j=1}^d w_j =1$.  Thus, the scales of all assets are balanced so that $\P(w_jX_t(j) > x) \sim \P(w_1 X_t(1)>x)$ as $x\to\infty$.
Figure \ref{fig:portfolio-comparison} (left) shows the time series of daily losses for the scale-balanced portfolio.  The right panel therein shows the
empirical value-at-risk as a function of $\alpha := 1-q$ for the balanced as well as for the {\em equally weighted} portfolio $\widetilde S_t := d^{-1}\sum_{j=1}^d X_t(j)$.
\begin{figure}[h]
\centering
\includegraphics[width=6in]{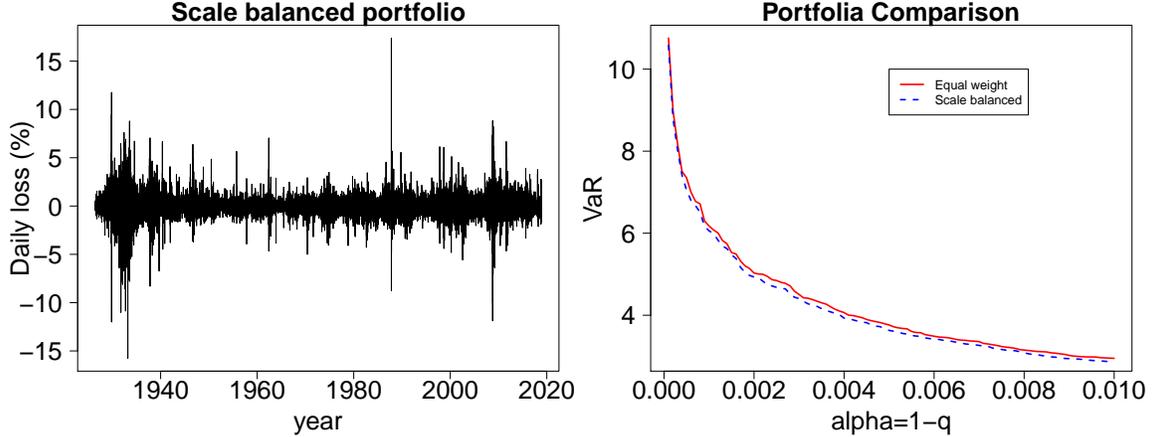}

\caption{\label{fig:portfolio-comparison} Left panel: Time series of daily losses for the scale-balanced portfolio. Right panel: empirical value-at-risk
as a function of $\alpha=1-q$ for the scale-balanced and equally weighted portfolia.}
\end{figure}

Observe that the VaR of the balanced portfolio is always lower (by about 1\% to 4.5\%) for a wide range of risk levels $q$.  This difference is significant and indicates
that the balanced portfolio is preferable in practice.  The reduction of risk may be explained by the fact that the extremal dependence in the assets is relatively balanced.  Had there been a group of industries which were significantly more dependent than the rest, the scale-balanced portfolio might not have outperformed 
the equally weighted one.  In such a case, one should balance the marginal risk (through the scales) as well as consider diversification due to extremal 
dependence.  Such portfolio optimization problems can be considered with the same tools that we employed here but they go beyond the scope of the present study.

Now, for the scale-balanced portfolio, the marginal constraints are met and one has
\begin{equation}\label{e:VaR-balanced-final}
{\rm VaR}_q (S_t) \sim \chi \times {\rm VaR}_q(w_1 X_t(1)),\ \ \mbox{ as }q\to 1,
\end{equation}
where $\chi = \rho^\xi$ with $\rho=\rho_{\bf 1}$ 
as in \eqref{e:rho_w}. Theorems \ref{thm:dvariate-lower} and \ref{thm:d-variate-upper} yield closed-form expressions for the upper and 
lower bounds on $\rho$ as a function of the single $d$-variate extremal index $\vartheta$.  On the other hand, Theorem \ref{thm:sol_L_linprog} shows that the lower bound on 
$\rho$ can be obtained by solving a linear program.  We used empirical estimates of the $d$-variate and all bi-variate extremal coefficients of the scale-balanced portfolio 
based on the $0.98$th empirical quantiles (see Table \ref{tab:theta-ests} below and Section \ref{sec:ext_coeff} for more details).  These estimates are in fact
valid extremal coefficients in the sense of Remark \ref{rem:consistent-thetas} (see Remark \ref{rem:calibration}, below).  The resulting bounds are given in 
Table \ref{tab:chi-bounds}.  Observe that the additional information in the bi-variate extremal coefficients substantially narrows the gap between the bounds based on 
a single constraint.  At the same time, relative to the wide Fr\'echet bounds, the improvement in the bounds due to single d-variate extremal coefficient is remarkable.
 \begin{table}[h]
 \centering
 \caption{\label{tab:chi-bounds} Bounds on the extreme VaR coefficient $\chi=\rho_{\bf 1}^\xi,\ \xi:= 0.1981$ for the scale-balanced portfolio with $d=10$.  }
 \begin{tabular}{c|cc}
 Constraints & Lower bound & Upper bound \\
 \hline 
Single d-variate &  4.1219 & 9.7818 \\
All bi-variate & 6.6852 & -- \\
\hline
Fr\'echet bounds (no constraints) & 1.5782 & 10
\end{tabular}
\end{table}

Finally, to obtain the estimate of ${\rm VaR}_q (S_t)$ in \eqref{e:VaR-balanced-final}, one needs to calculate the baseline  ${\rm VaR}_q(w_1 X_t(1))$.  We did so using empirical quantiles and also
from the Generalized Pareto tail approximation in \eqref{e:GP-tail}, which entails
$$
{\rm VaR}_q(w_1 X_t(1)) \approx   \frac{w_1 \widehat \xi}{\widehat \sigma_1}  \left(\frac{1-q}{p_0}\right)^{-\widehat \xi},
$$
where $\widehat \sigma_1=0.77$ and $\widehat \xi = 0.198$ is obtained through ML by assuming that the excess losses of all $10$ time series have a common 
tail index but different scales.  

\begin{figure}[h]
\includegraphics[width=6in]{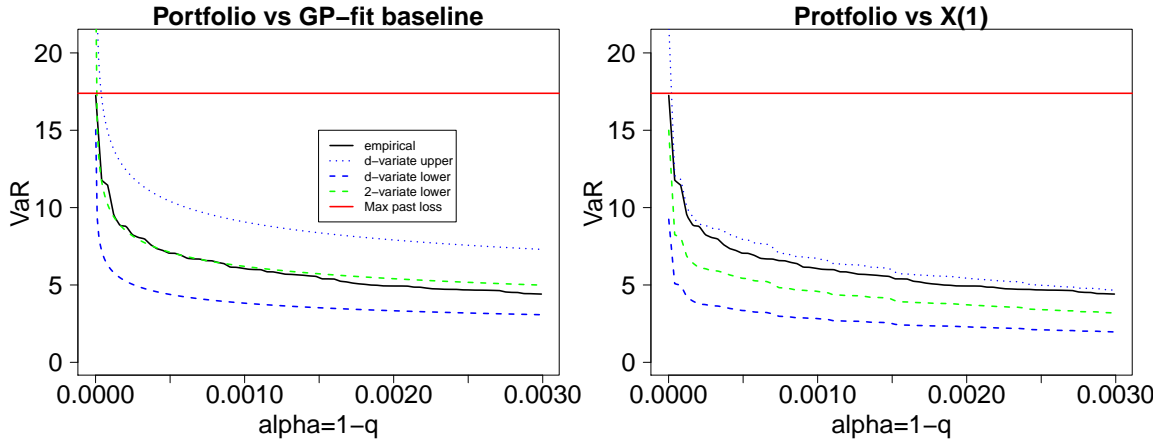}

\vspace{-.1in}
\caption{\label{fig:VaR-bounds} Upper and lower bounds on VaR as a function of $\alpha=1-q$ based on single $d$-variate and all bi-variate extremal coefficient 
constraints. The solid line indicates the empirical VaR.  Left panel: bounds are relative to the Generalized Pareto model-fit baseline.  Right panel: bounds are
relative to the empirical VaR of the non-durable goods industry.
}
\end{figure}

Figure \ref{fig:VaR-bounds} shows the upper and two types of lower bounds on ${\rm VaR}_q (S_t)$ as a function of $\alpha = 1-q$. The empirical portfolio VaR 
is also given (solid line).  The bounds in the left panel are relative to the baseline value-at-risk computed from the Generalized Pareto model approximation, 
while in the right panel ${\rm VaR}_q(w_1 X_t(1))$ is replaced by the corresponding empirical quantile.  Relative to the GP-fit baseline, the empirical portfolio VaR is within 
the upper and the larger lower bound (green dashed line) for extreme loss levels $\alpha<0.001$.  It falls slightly below the lower bound based on bi-variate 
extremal coefficient constraints for less-extreme loss levels, which can be attributed to both variability in the constraints estimates and uncertainty in the GP model.
Nevertheless, the agreement is remarkable, especially for extreme loss levels where the asymptotic approximation kicks-in.  In the right panel the bounds are relative
to the empirical value-at-risk baseline.  In this case, the portfolio VaR is {\em always} enclosed between the bi-variate lower bound and the d-variate upper 
bound and in fact the gap between them is more narrow relative to that in the left panel.  This illustrates that the asymptotic approximation 
is quite accurate for a wide range of extreme quantiles and that the extremal coefficient constraints capture well the extremal dependence between the 
assets in the portfolio.  One advantage of the GP-fit baseline however is that one can extrapolate the bounds on the portfolio VaR beyond the historically available quantile levels.
\begin{table}[h]
\centering
\caption{\label{tab:return-levels} Bounds on the return levels for the scale-balanced portfolio. VaR$_q$ with $1- q =  1/ (252\times m)$ is exceeded 
on the average once in every $m$ years. }
\begin{tabular}{r|rrr}
   Return levels (years) & 10 & 100 & 1000 \\ \hline
  d-variate upper   & 10.90 & 17.20 & 27.14 \\ 
  d-variate lower & 4.59 & 7.25 & 11.44 \\ 
  bi-variate lower & 7.45 & 11.75 & 18.55 \\ 
   \hline
\end{tabular}
\end{table}
 Indeed, Table \ref{tab:return-levels} provides bounds on the $10$, $100$ and $1000$-year return levels, where a year is assumed to have $252$ trading days.  
These results indicate for example that one should expect to encounter daily losses exceeding $4.59\%$ once in $10$ years on the average, even for the relatively 
diversified scale-balanced portfolio, but daily losses of $17.2\%$ or more are unusual 1-in-a-100 year type events.  Even though these results hinge on the 
assumption of stationarity in the  extremal dependence structure, they provide novel distributionally robust bounds of extreme portfolio or insurance risk 
and can be used to validate most if not all other model-based estimators of extreme value-at-risk.

\begin{table}[ht]
\centering
\caption{\label{tab:theta-ests} Empirical estimates for the bivariate extremal coefficients $\widehat \vartheta(\{i,j\})$
for the scale-balanced 10-industry potfolio based on exceedances over the $0.98$th quantiles.
See \eqref{e:theta-hat}.  The single $d$-variate extremal coefficient 
estimate based on the same quantile is $\widehat \vartheta(\{1,\dots,d\}) = 3.15$.}
\begin{tabular}{rrrrrrrrrrr}
  \hline
  {\ } & NoDur & Durbl & Manuf & Enrgy & HiTec & Telcm & Shops & Hlth & Utils & Other \\ 
  \hline
  NoDur & 1.00 & 1.46 & 1.37 & 1.50 & 1.48 & 1.54 & 1.38 & 1.44 & 1.48 & 1.40 \\ 
  Durbl & 1.46 & 1.00 & 1.30 & 1.50 & 1.46 & 1.58 & 1.44 & 1.57 & 1.46 & 1.35 \\ 
  Manuf & 1.36 & 1.29 & 1.00 & 1.43 & 1.40 & 1.53 & 1.36 & 1.49 & 1.40 & 1.26 \\ 
  Enrgy & 1.49 & 1.50 & 1.43 & 1.00 & 1.60 & 1.61 & 1.52 & 1.60 & 1.54 & 1.47 \\ 
  HiTec & 1.48 & 1.45 & 1.40 & 1.60 & 1.00 & 1.55 & 1.43 & 1.55 & 1.47 & 1.45 \\ 
  Telcm & 1.53 & 1.58 & 1.54 & 1.61 & 1.55 & 1.00 & 1.55 & 1.60 & 1.61 & 1.52 \\ 
  Shops & 1.37 & 1.44 & 1.36 & 1.52 & 1.43 & 1.55 & 1.00 & 1.48 & 1.47 & 1.38 \\ 
  Hlth & 1.43 & 1.56 & 1.49 & 1.60 & 1.55 & 1.60 & 1.48 & 1.00 & 1.60 & 1.54 \\ 
  Utils & 1.47 & 1.46 & 1.40 & 1.55 & 1.47 & 1.61 & 1.47 & 1.60 & 1.00 & 1.44 \\ 
  Other & 1.39 & 1.35 & 1.26 & 1.47 & 1.45 & 1.52 & 1.38 & 1.54 & 1.44 & 1.00 \\ 
   \hline
\end{tabular}
\end{table}


\subsection{Market and sectors framework} \label{sec:mkt-sec}

While the quantitative methods in previous section based on the knowledge of all bi-variate constraints yield tight bounds, their use in practice is 
limited to small and moderate dimensions due to practical challenges in solving the optimization problems.  In this section, our goals is two-fold. First, 
we illustrate how one may encode structural/expert knowledge through extremal 
 coefficient constraints.  Secondly, we show that the closed-form expressions in Theorems \ref{thm:dvariate-lower} and \ref{thm:d-variate-upper}
 can lead to practical and tight bounds on extreme VaR in high-dimensions, where numerical optimization is either challenging or impossible.
 
  We do so over a simple but instructive `market plus sectors' framework.  Namely, 
 suppose that the vector of portfolio losses $\bfX = (X_1,\cdots,X_d)$ is regularly varying with index $\xi \in (0,1)$ 
 and standardized marginal scales in the sense of \eqref{e:Ki-standardization}.   
 
 Let $\beta\in (0,1)$, and suppose that
 \begin{equation}\label{e:mkt-and-sectors}
  \bfX = \beta^\xi \bfX_{\rm mkt} + (1-\beta)^\xi \bfX_{\rm sec}, 
 \end{equation}
 where $\bfX_{\rm mkt}$ and $\bfX_{\rm sec}$ are independent and also regularly varying with index $\xi$ and 
asymptotically standardized margins (as in \eqref{e:Ki-standardization}). The components $\bfX_{\rm mkt}$ and $\bfX_{\rm sec}$ 
represent  the overall market and individual sector-specific risks, respectively.

We shall assume that the market risk affects all stocks and therefore model it as asymptotically {\em completely dependent}, i.e.,
$$
{\vartheta}_{\bfX_{\rm mkt}} (\{1,\dots,d\}) = 1.
$$
We shall also assume that $\bfX_{\rm sec} = (\bfX(1),\dots,\bfX(k))$ is partitioned into independent sub-vectors
$\bfX(i) = (X_{j} (i))_{j=1}^{d_i}$, each corresponding to a sector.  That is, $d= d_1+\cdots+d_k$ and
$$
\{1,\dots,d\} = J_1\cup\cdots \cup J_d,
$$
where $J_i,\ i=1,\dots,k$ are pairwise disjoint sets of indices.

Relation \eqref{e:mkt-and-sectors} leads to a simple but natural 2-tier asymptotic dependence structure.  The parameter $\beta$ controls the proportion of risk due to
 overall market-wide events, while the individual sectors may experience independent, and largely arbitrary internal risk exposures accounted for
 by the sector-specific component.  We demonstrate next how the closed form formulae in Theorems \ref{thm:dvariate-lower} and \ref{thm:d-variate-upper} lead to 
 tight  lower- and upper-bounds on extreme VaR for the portfolio $\bfX$.

Using the independence of the market and the sectors, it can be shown that:
\begin{equation}\label{e:mkt-sec-H}
H_{\bfX} = \beta\times H_{\bfX_{\rm mkt}} + (1-\beta) \times \sum_{i=1}^k H_{\bfX(i)},
\end{equation}
where $H$ with the corresponding sub-script is the properly normalized spectral measure of the corresponding 
vector $\bfZ:= \bfX^{1/\xi}$, where we naturally embed $H_{\bfX(i)}$ into the higher-dimensional space 
$\mathbb S_+ \subset \mathbb R^d$.

In view of \eqref{e:rho_w}, Relation \eqref{e:mkt-sec-H} entails
$$
  \rho_{\bfX} = \beta \times \rho_{\rm mkt} + (1-\beta) \times \sum_{i=1}^k \rho_{{\rm sec}, i},
$$
where $\rho_{\bfX} = \rho_{\bf 1}(H_{\bfX},\xi)$, $\rho_{\rm mkt} = \rho_{\bf 1}(H_{\bfX_{\rm mkt}},\xi)$, and
$\rho_{{\rm sec}, i} = \rho_{\bf 1}(H_{\bfX(i)},\xi)$ are the corresponding $\rho$-functionals of the overall
portfolio, its market, and sector components, respectively.  

Similarly, in view of \eqref{e:ext-coeffs}, Relation \eqref{e:mkt-sec-H} implies that
for every $J\subset\{1,\dots,d\}$, we have
\begin{equation}\label{e:mkt-sec-theta-J}
\vartheta_{\bfX} (J) = \beta\times \vartheta_{\bfX_{\rm mkt}}(J) + (1-\beta)\times \sum_{i=1}^k \vartheta_{\bfX(i)}(J).
\end{equation}

Notice that $\vartheta_{\bfX_{\rm mkt}}(J) = 1$, for all non-empty sets $J$, since the market factor is completely dependent.

These decomposition results allow us to obtain closed-form lower- and upper-bounds on $\rho_{\bfX}$ in terms of $\beta$ and $\vartheta_{\bfX}(J_i),\ i=1,\dots,d$.
Indeed, we have:
\begin{equation}\label{e:theta_sector_i}
\vartheta_{\bfX(i)}(J_i) = (\vartheta_{\bfX}(J_i) -\beta)/(1-\beta),\ i=1,\dots,k.
\end{equation}
Now, using the closed-form expressions for $\rho_{\bfX(i)}$ for each of the sectors $i=1,\dots,k$ based on the single $d_i$-variate constraint 
$\vartheta_{\bfX(i)}(J_i)$, for $i=1,\dots,k$, we obtain
\begin{equation}\label{e:mkt-sect-bounds}
{\cal B}(\bfX) = \beta\times d^{1/\xi} + (1-\beta) \times \sum_{i=1}^k {\cal B}\Big(d_i, \xi, (\vartheta_{\bfX}(J_i) -\beta)/(1-\beta) \Big),
\end{equation} 
where ${\cal B} \in \{ {\cal L}, {\cal U}\}$ and $ {\cal B}(d, \xi,\vartheta)$
denotes either the lower- or upper-bound formulae from \eqref{e:d-variate-lower} or \eqref{e:d-variate-upper}.

\begin{figure}[H]
\centering
\includegraphics[width=6in]{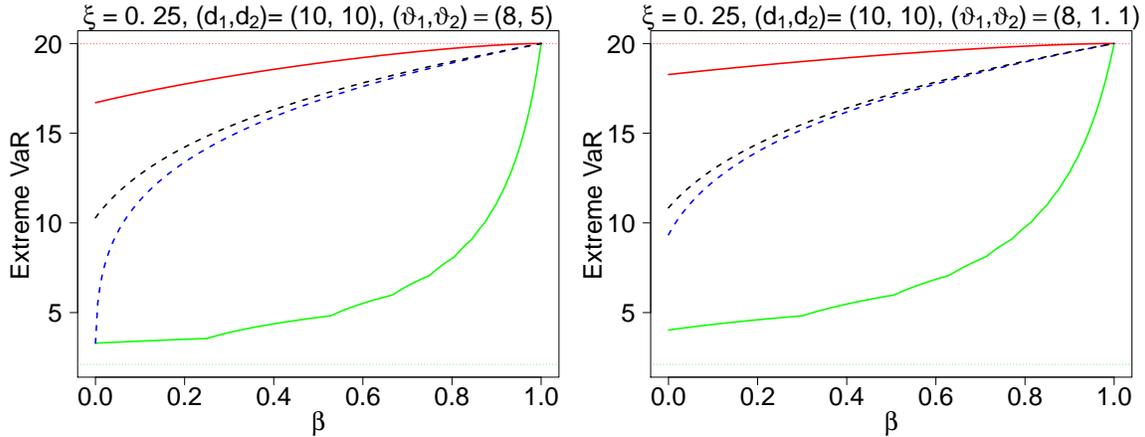}
\protect\caption{Upper and lower bounds on extreme VaR for a composite market plus sectors portfolio.
The dashed lines indicate the closed-form expressions based on \eqref{e:mkt-sect-bounds}. The solid lines
indicate the conservative bounds based on a single $d$-variate extremal coefficient constraint without any 
structural assumptions.}\label{fig:mkt-sectors}
\end{figure}
 
Figure \ref{fig:mkt-sectors} illustrates the significant reduction in the range of possible extreme VaR values based on the above setup for a range of $\beta$-values.
We have the simple partition into $k=2$ sectors and $(d_1,d_2) = (10,10)$.  Considered are two cases where the within-sector $d_i$-variate extremal coefficients 
in \eqref{e:theta_sector_i} are $(\vartheta_{\bfX(1)}(J_1),\vartheta_{\bfX(2)}(J_2)) = (8,5)$  (left panel) and $(\vartheta_{\bfX(1)}(J_1),\vartheta_{\bfX(2)}(J_2)) = (8,1.1)$
 (right panel).  In both cases $\xi =0.25$.
The dotted horizontal lines indicate the Hoeffding-Fr\'echet bounds on extreme VaR. The solid green and red lines correspond to the exact lower/upper bounds obtained by imposing a single $d$-variate constraint with
$$
\vartheta_{\bfX}(\{1,\dots,d\}) = \beta + (1-\beta) \times \sum_{i=1}^k \vartheta_{\bfX(i)}(J_i).
$$
Finally, the dashed blue/black lines correspond to the lower/upper bounds for $\rho$ obtained by using the decomposition into a single market factor effect
plus independent sector-specific risks.  Observe the significant reduction in the range of possible values for extreme VaR.  This is naturally attributed to the 
assumption of independence among the sectors.  The presence of a single asymptotically completely dependent market factor, however, can make this 
range approach the ultimate upper bound for $\beta \to 1$. Alternatively, if the proportion of the market risk is low ($\beta\to 0$), the lower bound approaches 
the ultimate single d-variate constraint lower bound (solid green curve) in the left panel.  In the right panel, however, one observes a non-trivial gap between the two lower 
bounds at $\beta=0$.  This can be attributed to the fact that the constraint $\vartheta_{\bfX(2)}(J_2)=1.1$ is rather close to complete dependence for the second sector, while
the overall portfolio constraint on $\vartheta_{\bfX}(\{1,\dots,d\})$ is far from complete dependence.  Thus, the additional sector-specific information leads to far less optimistic 
lower bound on extreme VaR than in the market-structure-agnostic case. 

The proportion of market-wide risk $\beta$ here was assumed to be known, for illustration purposes.  Using \eqref{e:mkt-sec-theta-J}, 
however, $\beta$ can be readily estimated in practice from an extremal coefficient $\vartheta_\bfX(J)$ involving a set $J$ 
of two or more sectors.  For example, given $\vartheta_{\bfX}(\{1,\dots,d\}) = c_0(\bfX)$ and $\vartheta_{\bfX}(J_i) = c_i(\bfX),\ i=1,\dots,k$, 
we obtain
$$
c_0(\bfX) = \beta + (1-\beta)\times \sum_{i=1}^k \vartheta_{\bfX(i)}(J_i)\quad \mbox{ and }\ c_i(\bfX) = \beta + (1-\beta) \times \vartheta_{\bfX(i)}(J_i),\ i=1,\dots,k.
$$
By elimination, these linear equations yield
$$
 \beta = \frac{\Big( \sum_{i=1}^k (c_i(\bfX) \Big) - c_0(\bfX)}{k-1},\ \ \mbox{ as well as }\ \ \vartheta_{\bfX(i)}(J_i) = \frac{c_i(\bfX) - \beta}{1-\beta}.
$$

\section{Summary and discussion}

 Under the general assumption of multivariate regular variation, the {\em extreme Value-at-Risk} of a $d$-dimensional portfolio, relative to a baseline asset, 
 can be expressed as an integral functional with respect to a finite measure on the unit simplex.  This, unknown (spectral) measure, is an infinite-dimensional
 parameter that encodes the complete extremal (joint) dependence structure of the assets in the portfolio.  In practice, the conventional 
 estimation of the spectral measure is challenging or impossible.  This motivated us to adopt distributionally robust perspective.  Namely,  study the
 optimization problems of finding the infimum and supremum of the extreme VaR functional over large classes of possible spectral measures.  Using
 popular and interpretable extremal coefficient constraints, we expressed the above optimization problems as duals to linear semi-infinite programs, which in turn
 were shown to have no duality gap.  Thus, a number of results on the structure of spectral measures corresponding to the best- and worst-case extreme VaR were
 obtained.  In the special case of {\em scale balanced portfolia}, we have also shown that the lower bound on extreme VaR corresponds to a spectral 
 measure of the so-called Tawn-Molchanov multivariate max-stable model, which can be solved with conventional linear programs.  We have also established surprising
 closed-form expressions for the lower- and upper-bound on extreme VaR under single $d$-variate extremal coefficient constraints, valid in all dimensions $d$.  These results
 were further illustrated and extended in the case of the market-and-sectors framework. The theoretical results were shown to provide practical bounds in a limited real data example, 
 and compared with conventional extreme value theory method.
 
 Our contributions are mostly theoretical. However, the established results, formulae and methods are motivated by important challenges in quantifying model uncertainty 
 when studying the risk of extremes in high-dimensional portfolia.  To provide a complete practical methodology for risk assessment a number of implortant
 problems remain to be addressed.  Namely,
 
  \begin{itemize}
    \item Develop practical or approximate solvers for the optimization problems in dimensions $d>10$.
    \item Study the optimal set of constraints ${\cal J}$ in terms of greatest reduction of the range of possible extreme VaR.
    \item Quantify the uncertainty in the resulting lower- and upper-bounds on extreme VaR stemming from the statistical error in 
    the estimation of the tail index $\xi$ and extremal coefficient constraints. 
  \end{itemize}  
  
  Finally, one very important open problem that stands out in our opinion is to establish closed form formulae in the case of single $d$-variate constraints
  (as in Theorems \ref{thm:dvariate-lower} and \ref{thm:d-variate-upper}) for a general un-balanced portfolio.  Such formulae, by the method of partitioning, 
  can lead to significant improvements on the range of extreme VaR similar to the ones obtained in Section \ref{sec:mkt-sec}.

 \section*{Acknowledgements} We thank two anonymous referees and an Associate Editor for their constructive criticism, which helped us significantly
 improve the presentation. Section \ref{sec:mkt-sec} was motivated by a question raised by a referee.  We also thank Dr.\ Kirstin Strokorb for mathematical 
 insights on Tawn-Molchanov max-stable models and for inspiring discussions. 
  
\appendix

\section{\label{sec:MRV_EVT} Multivariate regular variation and extremes}

For convenience of the reader, here we review some facts and technical results
on multivariate regular variation and extremes. For more details, see the comprehensive monographs 
\cite{resnick:1987,dehaan:ferreira:2006,resnick:2007} and the recent general approach to regular 
variation in metric spaces \cite{hult:lindskog:2006}. Some applications and extensions can be found 
in \cite{lindskog:resnick:roy:2014} and \cite{scheffler:stoev:2014}.

\begin{defn}\label{d:MRV} A random vector $\mathbf X = (X_i)_{i=1}^d$ in $\R^d$ is said to be multivariate regularly varying (MRV), if
there exist a sequence $a_n\ge 0,\ a_n\uparrow\infty$ and a Borel measure $\mu$ on $\R^d\setminus\{\mathbf 0\}$, such that:

{\it (i)} $\mu(A)<\infty$, for all Borel sets $A$, bounded away from the origin, i.e., such that $A\subset \R^d\setminus B(\mathbf 0,\epsilon),$ for some $\epsilon>0$,
where $B(\mathbf 0,\epsilon)$ denotes a ball centered at $\mathbf 0$ with radius $\epsilon$.

{\it (ii)} For all Borel sets $A$, bounded away from $\mathbf 0$ and such that $\mu(\partial A) = 0$, we have
\begin{equation}\label{e:d:MRV}
 n\P(a_n^{-1}\mathbf X \in A) \longrightarrow \mu(A),\ \ \mbox{ as }n\to\infty.
\end{equation}
In this case, we write $\mathbf X \in RV(\{a_n\},\mu)$.
\end{defn}
It can be shown that if $\mathbf X \in RV(\{a_n\},\mu)$, the sequence $a_n$ is necessarily regularly varying, i.e.\ there exist a positive constant 
$\xi>0$, such that  $a_{[tn]}/a_n \to t^{\xi},$ as $n\to\infty$, for all $t>0$.  Furthermore, the limit measure $\mu$ has the scaling 
property $\mu(c A) = c^{-1/\xi} \mu(A)$, for all $c>0$.  Different choices for the normalization sequence $\{a_n\}$ are possible, however,
the exponent $\xi$ is uniquely defined, given a random vector $\bfX$. To indicate that, we sometimes write $\bfX \in RV_{1/\xi}(\{a_n\},\mu)$.

An alternative, equivalent approach to multivariate regular variation is through polar coordinates.  
Namely, let $\|\cdot\|$ be an arbitrary norm in $\R^d$ (In fact, one can consider any positive and $1$-homogeneous continuous 
function on $\R^d\setminus\{\mathbf 0\}$ as the {\em radial} component see, e.g.,\ \cite{scheffler:stoev:2014}.) 
Then, $\mathbf X\in RV(\{a_n\},\mu)$, if and only if, for any (all) $s>0$,
\begin{equation}\label{e:MRV-polar}
 n\P( a_n^{-1} \|\mathbf X\| >s,\ \mathbf X/\|\mathbf X\| \in \cdot) \stackrel{w}{\longrightarrow} c s^{-1/\xi} \sigma(\cdot),\ \ \mbox{ as }n\to\infty,
\end{equation}
for some {\em probability measure} $\sigma$ defined on the unit sphere ${\mathbb S}_{\|\cdot\|}:=\{ \bfx \, :\, \|\bfx\|=1\}$.  It can be easily seen from 
\eqref{e:d:MRV} and \eqref{e:MRV-polar}, by setting $s=1$, that
$$
 c = \mu(\{\|\mathbf x\| >1\}) \ \ \mbox{ and, in fact, }\ \ \sigma(B)= \frac{\mu(\{\|\bfx\|>1, \bfx/\|\bfx\| \in B\})}{\mu(\{\|\bfx\|>1\})}.  
$$
for a Borel set $B\subset{\mathbb S}_{\|\cdot\|}.$
Relation \eqref{e:MRV-polar} can be interpreted in terms of polar coordinates as follows. Letting $\bfX\rightsquigarrow (R,\mathbf U)$ with 
$R:= \|\mathbf X\|$ and $\mathbf U:= \mathbf X/\|\mathbf X\|$, we have that 
$$
 n\P(a_n^{-1} R >s ) \to c s^{-1/\xi} \ \ \mbox{ and } \ \ \P( \mathbf U \in \cdot  | R>a_n ) \stackrel{w}{\longrightarrow} \sigma(\cdot),
$$
as $n\to\infty$.  This means, that the vector $\mathbf X  \rightsquigarrow(R,\mathbf U)$ is MRV if and only if its radial component is regularly varying and the conditional distribution 
of its angular component, given that the radius is extreme, converges weakly to the probability measure $\sigma$ (see, e.g., \cite{hult:lindskog:2006} and
Prop 3.9 in \cite{scheffler:stoev:2014}).  The {\em probability} measure $\sigma$ is referred to as the {\em
spectral measure} of $\mathbf X$.  Observe that, depending on the choice of the normalizing sequence $\{a_n\}$, the measure $\mu$ in \eqref{e:d:MRV}
and correspondingly, the constant $c$ in \eqref{e:MRV-polar}, may change.  The spectral measure $\sigma$ and the exponent $1/\xi$, however, are uniquely defined,
given a RV vector $\mathbf X$.

The measure $\mu$ has the polar coordinate representation $\mu(d\bfx) = c \nu_{1/\xi}(dr) \sigma(d\bfu)$, where is $\nu_{1/\xi}$ is a measure on $(0,\infty)$, such that
$\nu_{1/\xi}(c,\infty) = c^{-1/\xi},\ c>0$.  More precisely, we have the {\em disintegration formula}:
\begin{equation}\label{e:disintegration}
 \mu(A) = c \int_{\mathbb S_{\|\cdot\|}} \int_0^\infty  1_A(r\bfu) (1/\xi)  r^{-1-1/\xi} dr \sigma(d\bfu).
\end{equation}

\subsection{\label{sec:mevt}Multivariate extremes}

In the context of extreme value theory, the spectral measure $\sigma$ can be used to express the cumulative distribution function of the 
asymptotic distribution of independent component-wise maxima.  Specifically, let $\mathbf X = (X_i)_{i=1}^d,\ \mathbf X(k),\ k=1,\ldots,n$ be 
iid RV$(\{a_n\},\mu)$. For simplicity, assume that the $X_i$'s are non-negative. Then, the measure $\mu$ concentrates on $[0,\infty)^d\setminus\{\mathbf 0\}$. 
Consider the component-wise maxima $M_i(n) := \max_{k=1,\ldots,n} X_i(k),\ i=1,\ldots,d$.  Then, it can be shown that 
for all $\bfx =(x_i)_{i=1}^d\in [0,\infty)^d\setminus\{\mathbf 0\}$,
\begin{equation}\label{e:Frechet-limit}
 \P{\Big(} a_n^{-1} M_i(n) \le x_i,\ i=1,\ldots,d {\Big)} \longrightarrow G_\mu(\bfx):= \exp{\Big\{}-\mu([\mathbf 0,\bfx]^c){\Big\}},\ \ \mbox{ as }n\to\infty.
\end{equation}
That is, $a_n^{-1} \mathbf M_n := a_n^{-1} (M_i(n))_{i=1}^d$ converges in distribution to a vector $\mathbf Y$ with the cumulative distribution function $G_\mu$
given above. Indeed, by the independence of the $\mathbf X(k)$'s, we have
\begin{equation}\label{e:Frechet-limit-1}
 \P{\Big(} a_n^{-1} \mathbf M(n) \le \bfx {\Big)} = \P  {\Big(} a_n^{-1} \mathbf X \le \bfx {\Big)}^n = {\Big(} 1 - \frac{n\P(a_n^{-1} \mathbf X \in A)}{n}{\Big)}^n, 
\end{equation}
where $A = [\mathbf 0,\mathbf x]^c$, $\mathbf M(n) = (M_i(n))_{i=1}^d$ and the above inequalities are considered component-wise.
By using the scaling property of $\mu$, it can be shown that $A$ is a continuity set, and hence \eqref{e:d:MRV}
implies that $n\P(a_n^{-1} \mathbf X \in A) \to \mu(A)$, as $n\to\infty$.  Hence, the right-hand side of \eqref{e:Frechet-limit-1} converges to
$\exp\{-\mu(A)\}$, which is in fact the right-hand side of \eqref{e:Frechet-limit}.

Consider now the disintegration formula \eqref{e:disintegration} with $A = [\mathbf 0,\mathbf x]^c$.  Notice that
$r\bfu \in A$ if $r u_i > x_i$, for some $i=1,\ldots,d$, or equivalently $r>\min_{i=1,\ldots,d} x_i/u_i$.  Therefore, by \eqref{e:disintegration}, we have
$$
\mu(A) = c\int_{\mathbb S_{\|\cdot\|}} \int_{\min_{i=1,\ldots,d} x_i/u_i} (1/\xi) r^{-1-1/\xi} dr \sigma(d\bfu) = c \int_{\mathbb S_{\|\cdot\|}} {\Big(}
  \max_{i=1,\ldots,d} {u_i\over x_i} {\Big)}^{1/\xi} \sigma(d\bfu).
$$
That is, we obtain the following well-known expression of the distribution function $G_\mu$:
\begin{equation}\label{e:Frechet-spectral}
\P(\mathbf Y\le \bfx) \equiv G_\mu(\bfx) = \exp {\Big\{}-c \int_{\mathbb S_{\|\cdot\|}} {\Big(}\max_{i=1,\ldots,d} {u_i \over x_i} {\Big)}^{1/\xi} \sigma(d\bfu){\Big\}},
\end{equation}
$\bfx \in \mathbb R_+^d\setminus \{\mathbf 0\}$ (see, e.g., Ch.\ 5 in \cite{resnick:1987}).

\subsection{\label{sec:ext_coeff} Extremal coefficients} Let $J \subset\{1,\ldots,d\}$ be a non-empty subset of coordinates of 
the random vector $\mathbf Y$ in \eqref{e:Frechet-spectral}.  Recall that the extremal coefficient ${\vartheta}(J)$ is defined as follows
$$
\P {\Big(}\max_{j\in J} Y_j\le 1{\Big)} =: \exp\{ -{\vartheta}(J) \}.
$$
In view of \eqref{e:Frechet-spectral}, we have
\begin{equation}\label{e:theta-def}
{\vartheta}(J) = c \int_{\mathbb S_{\|\cdot\|}} {\Big(}\max_{j\in J} u_i^{1/\xi} {\Big)} \sigma(d\bfu).
\end{equation}
Moreover, by \eqref{e:Frechet-limit} one can show that 
$$
n \P {\Big(}\max_{j\in J} X_j >a_n x{\Big)} \longrightarrow {\vartheta}(J),\ \mbox{ as }n\to\infty.
$$
Therefore, modulo a common scaling factor, all these extremal coefficients can be readily estimated via the asymptotic scale 
coefficients of the heavy-tailed distributions $\max_{j\in J} X_j$.  Specifically, we have
$$
 \lim_{x\to\infty}\frac{\P (\max_{j\in J} X_j >x)}{\P(X_1>x)}  = {{\vartheta} (J) \over {\vartheta}(\{1\})},\ \ J\subset \{1,\ldots,d\}.
$$
By suitable rescaling of the {\em reference asset} $X_1$ (or equivalently, the normalization sequence $\{a_n\}$), without loss of generality,
we may assume that $\vartheta(\{1\}) = 1.$ Given independent copies $\mathbf X_i,\ i=1,\ldots,n$ of $\mathbf X$, define the self-normalized 
estimators
\begin{equation}\label{e:theta-hat}
  \widehat\vartheta_{x}(J):= {\sum_{i=1}^n \mathbb{I}( \max_{j\in J} X_j(i) >x) \over \sum_{i=1}^n \mathbb{I}( X_1(i) >x)}.
\end{equation}
\begin{rem}
It can be shown that the estimators in \eqref{e:theta-hat} are weakly consistent for any choice of a regularly varying sequence $x=x_n\to \infty$ such that 
$n\P(X_1(i)> x_n)\to \infty$, as $n\to\infty$, i.e., we have $\hat\vartheta_{x_n}(J)\to \vartheta(J)$ in probability. This is true for example for the sequence 
$x_n:= n^{1/(1/\xi+\delta)},$ for any $\delta>0$. The consistency of $\hat\vartheta_{x_n}(J)$ follows by applying 
Theorem 5.3.(ii) in \cite{resnick:2007} to both the numerator and denominator in \eqref{e:theta-hat}, viewed as 
empirical measures of the type $b_n^{-1} \sum_{i=1}^n {\mathbb I}_{\{ Y(i)/x_n\}} (\cdot)$, where $Y(i)$ stands for either $X_1(i)$ or $\max_{j\in J}X_j(i)$.  
The sequence $b_n\nearrow \infty$ herein is chosen such that $(n/b_n)\P( X_1(i) > x_n s) \to s^{-1/\xi},$ as $n\to\infty$, for all $s>0$.  The fact 
that such a sequence $b_n$ can be found follows from the regular variation property of $x_n$ and the distribution of $X_1(i)$.
\end{rem}

\begin{rem} \label{rem:calibration} Recall Remark \ref{rem:consistent-thetas}.  The empirically estimated extremal coefficients in \eqref{e:theta-hat} do satisfy the
 consistency relationships of a set of valid extremal coefficients.  Indeed, it follows from Lemma \ref{lem:max-alteration} (below), with $x_j:= \mathbb{I}(X_j(i)>x)$ that
 $$
  \sum_{L \,:\, J\subseteq L \subseteq \{1,\dots,d\}} (-1)^{|L\setminus J|+1}  \max_{j \in L} \mathbb{I}\Big(X_j(i) > x\Big)  \equiv
  \sum_{L \,:\, J\subseteq L \subseteq \{1,\dots,d\}} (-1)^{|L\setminus J|+1} \mathbb{I}\Big( \max_{j \in L} X_j(i) > x \Big) \ge 0,
 $$
 for all $i$.  Thus, the desired consistency relationships in Remark \ref{rem:consistent-thetas}, follow by summing over $i$ since the denominator 
 in \eqref{e:theta-hat} is common and positive. 
\end{rem}

\begin{rem} In practice, however, when the extremal coefficients are either imposed or estimated in some other way, different from \eqref{e:theta-hat}, 
 one needs to ensure they provide consistent constraints.  This can be done by ``projecting" them onto the convex set of valid vectors of extremal 
 coefficients ${\bf c}_{\cal J} = (c_J)_{J\in {\cal J}}$.
   Specifically, by M\"obius inversion, we know that ${\bf c}_{\cal J} = A \boldsymbol{\beta}$, where $\boldsymbol{\beta} \in \mathbb R_+^{2^d-1}$ and
   a certain design matrix $A$ of dimension $|{\cal J}| \times (2^{d}-1)$.  In practice, if the vector of estimated coefficients is $\widehat {\bf c}_{\cal J}$,
   we solve the quadratic optimization program 
   $$
   {\rm minimize}\  \Big\{   \| \widehat {\bf c}_{\cal J} - A{\boldsymbol{\beta}}\|^2 + \lambda \|\boldsymbol{\beta}\|^2 \Big\},
   $$
   subject to $\boldsymbol{\beta}\ge {\bf 0}$, for some small regularization parameter $\lambda>0$.  We take the solution $A\boldsymbol{\beta}$ as the constraints in our 
   extreme VaR optimization algorithms. In our experience, the so-calibrated extremal coefficient constraints are quite close to the ones estimated 
   in practice.  This calibration and other important statistical issues merit further independent investigation.
\end{rem}

The following elementary lemma follows by induction, although it may be possible to obtain with general M\"obius inversion techniques.  This result is
used to show that the empirical extremal coefficients in \eqref{e:theta-hat} satisfy the consistency relationships of a valid set of extremal coefficients 
(cf Remark \ref{rem:consistent-thetas}). 

\begin{lem}\label{lem:max-alteration} Let $d\ge 2$ be an integer.  For all $x_i\ge 0,\ i=1,\dots,d$, and 
$J \subset\{1,\dots,d\},\ J \not=\{1,\dots,d\}$,
we have
$$
S(J):= \sum_{L \,:\, J\subseteq L \subseteq \{1,\dots,d\}} (-1)^{|L\setminus J|+1} x_j \ge 0,
$$
where by convention $\max_{j\in \emptyset}x_j :=0$. 
\end{lem}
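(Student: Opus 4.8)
The plan is to reduce the claim to the special case of $0/1$ inputs via the layer-cake (horizontal-slicing) representation of the maximum, and then dispatch the remaining finite combinatorial identity through the vanishing of alternating sums over a Boolean lattice. Write $D:=\{1,\dots,d\}$ and, for $t\ge 0$, set $A_t:=\{j\in D:x_j>t\}$. Since each $x_j\ge 0$, for every $L\subseteq D$ one has $\max_{j\in L}x_j=\int_0^\infty \mathbb{I}(\max_{j\in L}x_j>t)\,dt=\int_0^\infty \mathbb{I}(L\cap A_t\neq\emptyset)\,dt$, where for $L=\emptyset$ both sides are $0$ under the convention $\max_{j\in\emptyset}x_j:=0$. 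Substituting this into $S(J)=\sum_{J\subseteq L\subseteq D}(-1)^{|L\setminus J|+1}\max_{j\in L}x_j$ and interchanging the finite sum with the integral, it suffices to show that for each fixed $t\ge 0$ the integrand
$$
I(A):=\sum_{L\,:\,J\subseteq L\subseteq D}(-1)^{|L\setminus J|+1}\,\mathbb{I}(L\cap A\neq\emptyset)
$$
is non-negative, where $A:=A_t\subseteq D$.

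First I would reparametrize by $M:=L\setminus J\subseteq D\setminus J$, so that $|L\setminus J|=|M|$ and $(J\cup M)\cap A=(J\cap A)\cup(M\cap A)$. If $J\cap A\neq\emptyset$, the indicator is identically $1$ and $I(A)=-\sum_{M\subseteq D\setminus J}(-1)^{|M|}=0$, because $D\setminus J\neq\emptyset$ (this is exactly the hypothesis $J\neq D$) forces the alternating sum over all subsets to vanish. If instead $J\cap A=\emptyset$, then $\mathbb{I}(L\cap A\neq\emptyset)=\mathbb{I}(M\cap A\neq\emptyset)$; writing $\mathbb{I}(M\cap A\neq\emptyset)=1-\mathbb{I}\bigl(M\subseteq(D\setminus J)\setminus A\bigr)$ splits $I(A)$ into the same vanishing alternating sum plus $\sum_{M\subseteq(D\setminus J)\setminus A}(-1)^{|M|}$, and this last sum equals $1$ when $(D\setminus J)\setminus A=\emptyset$ and $0$ otherwise. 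Thus $I(A)\in\{0,1\}$ in every case, so $I(A)\ge 0$, and integrating in $t$ gives $S(J)\ge 0$.

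I do not anticipate a genuine obstacle: the argument is elementary once the layer-cake reduction is in place. The only point requiring care is bookkeeping the empty-set conventions — the $L=\emptyset$ term when $J=\emptyset$, and the degenerate reading of $\max_{j\in\emptyset}x_j$ in the slicing identity — together with the explicit use of $J\neq D$, which is precisely what makes the leading alternating sum vanish; the inequality would fail without it. As a byproduct the computation identifies $I(A_t)=\mathbb{I}(A_t=D\setminus J)$, so $S(J)=\mathrm{Leb}\{t\ge 0:\{j:x_j>t\}=D\setminus J\}$, an exact non-negative formula that transparently re-derives the claim and clarifies its role in verifying the extremal-coefficient consistency relations of Remark \ref{rem:consistent-thetas}. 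An alternative route by induction on $|D\setminus J|$, peeling off one index of $D\setminus J$ and telescoping, also works but is messier because the maxima do not separate as cleanly.
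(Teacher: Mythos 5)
Your proof is correct, and it takes a genuinely different route from the paper's. The paper argues by downward induction on $|J|$: the base case $|J|=d-1$ gives $S(J)=\max_{j\in D}x_j-\max_{j\in J}x_j\ge 0$ directly, and the inductive step rests on a displayed recursion expressing $S(J)$ through the quantities $S(J\cup\{j_i\})$, from which non-negativity is inherited. You instead slice horizontally, writing $\max_{j\in L}x_j=\int_0^\infty\mathbb{I}(L\cap A_t\neq\emptyset)\,dt$ with $A_t=\{j:x_j>t\}$, and reduce the claim to the vanishing of alternating sums over the Boolean lattice $2^{D\setminus J}$; the hypothesis $J\neq D$ enters exactly where it must, to kill the leading alternating sum. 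What your approach buys is the exact identity $S(J)=\mathrm{Leb}\{t\ge 0:\{j:x_j>t\}=D\setminus J\}$, which makes non-negativity (and the failure at $J=D$) transparent, and which is essentially the ``general M\"obius inversion'' route the paper mentions but does not pursue. It is also the more robust of the two arguments: the paper's recursion is delicate and, as literally stated, does not balance the coefficients of $\max_{j\in J\cup M}x_j$ for $|M|\ge 1$ (for $d=2$ and $J=\emptyset$ its right-hand side evaluates to $2\max(x_1,x_2)$ while $S(\emptyset)=\min(x_1,x_2)$), whereas every step of your computation can be checked term by term. The bookkeeping points you flag --- the $L=\emptyset$ term under the convention $\max_{j\in\emptyset}x_j=0$, the interchange of the finite sum with the integral, and the explicit use of $J\neq D$ --- are all handled correctly.
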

\begin{proof} We establish the claim by induction.  If $J=\{1,\dots,d\}\setminus\{j_0\}$, then trivially $S(J) = 0$, if $\max_{j\in J} x_j \ge   x_{j_0}$ and 
$S(J) = x_{j_0} - \max_{j\in J} x_j >0$, otherwise.  This proves that $S(J)\ge 0$, for all $J$ such that $|J|=d-1$. 

Suppose, now that $|J| \le d-2$ and $S(\widetilde J)\ge 0$, for all $|\widetilde J| \ge |J|+1$.  
Let $\{j_1,\dots,j_m\}:=\{1,\dots,d\}\setminus J $ and observe that
$$
S(J) = - \max_{j\in J} x_j + \sum_{i=1}^m \max_{j\in J\cup\{j_i\}} x_j + \sum_{i=1}^m S(J\cup\{j_i\}).
$$
The latter however is non-negative.  Indeed, by the induction assumption, we have $S(J\cup\{j_i\})\ge 0$, while $\max_{j\in J\cup\{j_i\}}\ge \max_{j\in J} x_j$, for each
$i=1,\dots,m$, which since the $x_j$'s are non-negative implies that $S(J)\ge 0$.  Appealing to the induction principle, we conclude the proof.
\end{proof}

\subsection{On Extreme VaR for homogeneous risk functionals}

Let $\mathbf X\in RV_{1/\xi}(\{a_n\},\mu)$ be a vector of losses.  It is convenient to write $\bfX = (Z_i^{\xi})_{i=1}^d$, where 
$\bfZ = (Z_i)_{i=1}^d \in RV_{1}(\{b_n\},\nu)$, with $b_n:= a_n^{1/\xi}$ and $\nu (A) = \mu (A^{\xi})$.  

Consider a set of positive portfolio weights $w_i>0,\ i=1,\ldots,d$ for the $d$ assets. Then, the cumulative portfolio loss
$S=\sum_{i=1}^d w_i X_i$ can be expressed as 
$$
  S = h_{\bfw} (\bfZ),\ \ \mbox{ where }\ h(\bfz) = \sum_{i=1}^d w_i z_i^{\xi} \equiv \sum_{i=1}^d w_i z_i^\xi,
$$
is a positive, $\xi$-homogeneous function of $\bfZ$. 

The asymptotic scale of the loss $S$ relative to a reference asset is the key ingredient in computing 
extreme Value-at-Risk. Indeed, if
\begin{equation}\label{e:rho-S-X1}
\rho := \lim_{x\to\infty} \frac{\P(S>x)}{\P(X_1>x)},
\end{equation}
then by Lemma 2.3 in \cite{embrechts:lambrigger:wuthrich:2009}, we have that 
\begin{equation}\label{e:Embrechts}
 \lim_{q\nearrow 1} \frac{VaR_q(S)}{VaR_q(X_1)} = \rho^{\xi}.
\end{equation}
The following result is extends the formulae in \cite{barbe:fougeres:genest:2006} (see also 
Theorem 4.1 of \cite{embrechts:lambrigger:wuthrich:2009}), which address only the case of equal 
portfolio weights and tail-equivalent losses.

\begin{prop}\label{p:Barbe-fla-extension}
Let $\bfZ = (Z_i)_{i=1}^d := (X_i^{1/\xi})_{i=1}^d \in RV_1(\{b_n\},\nu)$ be a non-negative regularly 
varying random vector with exponent equal to $1$.  Fix a norm $\|\cdot\|$ in $\R^d$ and let $\sigma_\bfZ$ be the spectral measure of $\bfZ$ induced on the 
positive unit sphere $\mathbb S_{\|\cdot\|}^+:= \{ \bfx \ge \mathbf 0\, :\, \|\bfx\| =1\}$. That is,
\begin{equation}\label{e:l:rho-nu}
 \nu(d\bfx) = c r^{-2}d r \sigma_\bfZ(d \bfu),
 \end{equation}
 where $c = \nu\{ \|\bfx\|>1\}$ 
 and $(r,u):= (\|\bfx\|, \bfx/\|\bfx\|)$ are the polar coordinates in $[0,\infty)^d\setminus\{\mathbf 0\}$.

For $\rho_{\bfw} = \rho(S,X_1)$ in \eqref{e:rho-S-X1}, we have
\begin{equation}\label{e:Barbe-fla-extension}
 \rho_{\bfw} (S,X_1) = \frac{1}{\sigma_1} \int_{S_{\|\cdot\|}^+} {\Big(} \sum_{i=1}^d w_i u_i^{\xi} {\Big)}^{1/\xi} \sigma_{\bfZ}(d\bfu),
\end{equation}
where  $\sigma_1:= \int_{S_{\|\cdot\|}^+} u_1 \sigma_{\bfZ} (d\bfu)$.
\end{prop}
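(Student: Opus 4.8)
The plan is to write both tail events $\{S>x\}$ and $\{X_1>x\}$ as dilations of fixed reference sets, invoke the regular variation of $\bfZ$ to reduce the limit ratio $\rho_\bfw$ to a ratio of $\nu$-measures of those sets, and finally evaluate the two $\nu$-measures through the disintegration formula \eqref{e:l:rho-nu}.

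First I would record that $S = h_\bfw(\bfZ)$ with $h_\bfw(\bfz) = \sum_{i=1}^d w_i z_i^\xi$ continuous and $\xi$-homogeneous, while $X_1 = Z_1^\xi$. Setting
$$
A_1 := \{\bfz \ge \mathbf 0 \, :\, h_\bfw(\bfz) > 1\} \quad\text{and}\quad B_1 := \{\bfz \ge \mathbf 0 \, :\, z_1 > 1\},
$$
the $\xi$-homogeneity of $h_\bfw$ and the positive homogeneity of the coordinate $z_1$ give, for every $x>0$, the identities $\{S>x\} = \{\bfZ \in x^{1/\xi}A_1\}$ and $\{X_1>x\} = \{\bfZ \in x^{1/\xi}B_1\}$. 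Both $A_1$ and $B_1$ are open and bounded away from the origin: since each $w_i>0$ and $h_\bfw$ attains a finite maximum $M$ on the compact set $\mathbb S_{\|\cdot\|}^+$, the condition $h_\bfw(\bfz)>1$ forces $\|\bfz\| > M^{-1/\xi}$, while $z_1>1$ forces $\|\bfz\| \ge z_1/C_0 > 1/C_0$, where $C_0$ is a norm-equivalence constant with $|z_1| \le C_0\|\bfz\|$.

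Next I would pass to the limit. Because $\bfZ \in RV_1(\{b_n\},\nu)$, for any two $\nu$-continuity sets $A,B$ bounded away from $\mathbf 0$ with $\nu(B)>0$ one has $\P(\bfZ \in tA)/\P(\bfZ \in tB) \to \nu(A)/\nu(B)$ as $t\to\infty$, a standard consequence of the regular-variation convergence (e.g.\ Theorem 5.3 in \cite{resnick:2007}). Before applying this I must verify that $A_1,B_1$ are $\nu$-continuity sets: since $A_1$ is open, $\partial A_1 \subseteq \{h_\bfw = 1\}$, and under \eqref{e:l:rho-nu} each ray $\{r\bfu \, :\, r>0\}$ meets this surface in the single radius $r = h_\bfw(\bfu)^{-1/\xi}$ (using $h_\bfw(\bfu)>0$ on $\mathbb S_{\|\cdot\|}^+$), a Lebesgue-null set against $r^{-2}\,dr$, so $\nu(\partial A_1)=0$; the same argument applies to $B_1$. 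Taking $t = x^{1/\xi}$ with $A=A_1$, $B=B_1$ then yields
$$
\rho_\bfw = \lim_{x\to\infty} \frac{\P(S>x)}{\P(X_1>x)} = \frac{\nu(A_1)}{\nu(B_1)}.
$$

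Finally I would evaluate the two measures with \eqref{e:l:rho-nu}. For $A_1$ the inner radial integral is $\int_{h_\bfw(\bfu)^{-1/\xi}}^\infty r^{-2}\,dr = h_\bfw(\bfu)^{1/\xi}$, giving $\nu(A_1) = c\int_{\mathbb S_{\|\cdot\|}^+} \big(\sum_{i=1}^d w_i u_i^\xi\big)^{1/\xi}\sigma_\bfZ(d\bfu)$; for $B_1$ the inner integral is $\int_{1/u_1}^\infty r^{-2}\,dr = u_1$ (and $0$ when $u_1=0$), so $\nu(B_1) = c\int_{\mathbb S_{\|\cdot\|}^+} u_1\,\sigma_\bfZ(d\bfu) = c\,\sigma_1$. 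The common constant $c$ cancels in the quotient $\nu(A_1)/\nu(B_1)$, producing exactly \eqref{e:Barbe-fla-extension}. The radial integrals are routine; the step demanding the most care is the reduction of the tail-probability ratio to $\nu(A_1)/\nu(B_1)$ — specifically confirming that $A_1$ and $B_1$ are $\nu$-continuity sets bounded away from $\mathbf 0$ so that the scalar regular-variation limit may be passed through the quotient — after which the remainder is bookkeeping.
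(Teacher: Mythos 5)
Your proof is correct and follows essentially the same route as the paper's: the paper packages the key step as Lemma \ref{l:rho} on general $\xi$-homogeneous functionals $h$, invoking the Hult--Lindskog mapping theorem to get $n\P(h(b_n^{-1}\bfZ)>s)\to\nu\circ h^{-1}(s,\infty)$, and then evaluates $\nu\circ h^{-1}(1,\infty)$ by exactly the polar disintegration you use, obtaining the radial integrals $h_\bfw(\bfu)^{1/\xi}$ and $u_1$ whose ratio is \eqref{e:Barbe-fla-extension}. Your direct identification of the tail events as dilations $x^{1/\xi}A_1$, $x^{1/\xi}B_1$ of fixed $\nu$-continuity sets, combined with the standard ratio form of multivariate regular variation, is an equivalent substitute for that mapping-theorem step, and your verification that $A_1$ and $B_1$ are continuity sets bounded away from the origin supplies the detail the paper only asserts via the scaling property.
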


The proof is a direct consequence of the next lemma, which establishes the asymptotic scale of $h(\bfZ)$ for 
a general $\xi$-homogeneous risk functional $h$.

\begin{lem} \label{l:rho} Let $\bfZ$ be as in Proposition \ref{p:Barbe-fla-extension} and  $h:[0,\infty)^d\to [0,\infty)$ be an arbitrary
non-negative $\xi-$homogeneous function, i.e.\ $h(c\bfx) = c^\xi h(\bfx),\ \forall c>0$. Then, for all $s>0$, we have
$$
 n\P(b_n^{-1/\xi} h(\bfZ) > s) \longrightarrow c \times \rho(h) s^{-1/\xi},\ \ \mbox{ as } n\to\infty, 
$$
where
\begin{equation}\label{e:rho-h}
\rho(h) = \int_{\mathbb S_{\|\cdot\|}^+} h(\bfu)^{1/\xi} \sigma_{\bfZ}(d\bfu).
\end{equation}
\end{lem}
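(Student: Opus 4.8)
The plan is to reduce the scalar tail probability of $h(\bfZ)$ to a single multivariate regular variation statement for $\bfZ$ applied to one well-chosen set. The crucial (and only) use of homogeneity is the pointwise identity $h(b_n^{-1}\bfZ) = b_n^{-\xi}h(\bfZ)$, valid since $h$ is $\xi$-homogeneous. Introducing
\[
A_s := \{\bfz \in [0,\infty)^d \,:\, h(\bfz) > s\},
\]
this identity gives the exact equality of events $\{b_n^{-1}\bfZ \in A_s\} = \{b_n^{-\xi}h(\bfZ) > s\}$, so the quantity to be analyzed is precisely $n\P(b_n^{-1}\bfZ \in A_s)$. It therefore suffices to (a) evaluate $\nu(A_s)$, and (b) check that $A_s$ is an admissible test set for the convergence \eqref{e:d:MRV} in Definition \ref{d:MRV}, i.e.\ that it is bounded away from $\mathbf 0$ and satisfies $\nu(\partial A_s)=0$.

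First I would compute $\nu(A_s)$ directly from the disintegration \eqref{e:l:rho-nu}. Passing to polar coordinates $\bfz = r\bfu$ and using $h(r\bfu)=r^\xi h(\bfu)$, the condition $h(\bfz)>s$ reads $r > (s/h(\bfu))^{1/\xi}$ on $\{h(\bfu)>0\}$, so the inner radial integral is $\int_{(s/h(\bfu))^{1/\xi}}^{\infty} r^{-2}\,dr = s^{-1/\xi} h(\bfu)^{1/\xi}$. By Tonelli (the integrand is non-negative and measurable) this yields
\[
\nu(A_s) = c\, s^{-1/\xi}\int_{\mathbb S_{\|\cdot\|}^+} h(\bfu)^{1/\xi}\,\sigma_\bfZ(d\bfu) = c\,\rho(h)\, s^{-1/\xi},
\]
with $\rho(h)$ as in \eqref{e:rho-h}, which is exactly the asserted limit. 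Here $\rho(h)<\infty$ because $h$ is bounded on the compact sphere and $\sigma_\bfZ$ is a probability measure.

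Next I would verify the admissibility of $A_s$. Boundedness away from the origin follows from $M := \sup_{\mathbb S_{\|\cdot\|}^+} h < \infty$: any $\bfz = r\bfu \in A_s$ satisfies $s < h(r\bfu) = r^\xi h(\bfu) \le M r^\xi$, whence $r \ge (s/M)^{1/\xi}>0$. For the continuity-set condition, note $\partial A_s \subseteq \{h = s\}$, and the formula just obtained shows $s \mapsto \nu(\{h>s\}) = c\rho(h)s^{-1/\xi}$ is continuous; hence $\nu(\{h=s\})=0$ for every $s>0$, so $\nu(\partial A_s)=0$. With both conditions established, Definition \ref{d:MRV} gives $n\P(b_n^{-1}\bfZ\in A_s)\to \nu(A_s)$, and combining with the event identity of the first paragraph completes the proof. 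The one genuinely delicate ingredient is this admissibility step; it is clean here precisely because the $h$ of interest, $h(\bfz)=w_1 z_1^\xi+\cdots+w_d z_d^\xi$, is continuous and hence bounded on the sphere. For a merely measurable $\xi$-homogeneous $h$ unbounded on the sphere, $A_s$ could fail to be bounded away from $\mathbf 0$, and the argument would then require an additional truncation exploiting $\rho(h)<\infty$.
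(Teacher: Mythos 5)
Your argument is correct, and its computational core---passing to polar coordinates via the disintegration \eqref{e:l:rho-nu} and evaluating the radial integral to get $\nu(h^{-1}(s,\infty)) = c\,\rho(h)\,s^{-1/\xi}$---is literally the same calculation the paper performs. Where you differ is in how the convergence $n\P(b_n^{-1}\bfZ\in A_s)\to\nu(A_s)$ is justified: the paper outsources this step to the mapping theorem for regularly varying vectors (Theorem 6 and Remark 7 of \cite{hult:lindskog:2005}) and then argues that the sets $h^{-1}(s,\infty)=s^{1/\xi}h^{-1}(1,\infty)$ are $\nu$-continuity sets by the scaling property, whereas you verify admissibility of $A_s$ directly from Definition \ref{d:MRV} (bounded away from $\mathbf 0$ via $r\ge (s/M)^{1/\xi}$, and $\nu(\partial A_s)=0$ via the continuity of $s\mapsto\nu(\{h>s\})$ deduced from the explicit formula). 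Your route buys self-containedness at the cost of making the continuity of $h$ explicit: $\partial A_s\subseteq\{h=s\}$ and $\sup_{\mathbb S^+_{\|\cdot\|}}h<\infty$ both require it, and you are right to flag that the lemma's phrasing ``arbitrary $\xi$-homogeneous function'' is too generous---the paper's citation of Hult--Lindskog implicitly carries the same continuity (or $\nu$-a.e.\ continuity) hypothesis, which is harmless since the only $h$ used downstream is continuous. One small point neither you nor the paper addresses: the normalization in the displayed statement reads $b_n^{-1/\xi}h(\bfZ)$, while the identity $h(b_n^{-1}\bfZ)=b_n^{-\xi}h(\bfZ)$ that both proofs rely on shows the natural normalization is $b_n^{-\xi}h(\bfZ)$; this appears to be a typo in the lemma rather than a defect of either argument.
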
 
This result shows that $h(\bfZ)$ is regularly varying (provided $\rho(h)>0$) 
and in fact it identifies its asymptotic scale coefficient in terms of the spectral measure $H$.

\begin{proof}[Proof of Lemma \ref{l:rho}] By Theorem 6 and Remark 7 of \cite{hult:lindskog:2005}, we have that 
\begin{equation}\label{e:l:rho-1}
n\P(h(b_n^{-1}\bfZ) > s) \longrightarrow \nu\circ h^{-1} (s,\infty),\ \ \mbox{ as }n\to\infty.
\end{equation}
Note that the above convergence is valid for all $s>0$ since the by the scaling property of $\nu$ and the homogeneity of $h$, all sets $h^{-1}(s,\infty) 
= s^{1/\xi} h^{-1}(1,\infty)$ are in fact continuity sets of $\nu$. It remains to express the right-hand side of \eqref{e:l:rho-1} in terms of the spectral measure 
$\sigma_\bfZ$.
In view of \eqref{e:l:rho-nu} and by using the $\xi$-homogeneity of $h$, we obtain
\begin{eqnarray*}
\nu\circ h^{-1}(s,\infty) &=& c \int_{\mathbb S_{\|\cdot\|}^+} \int_0^\infty 1_{h^{-1}(s,\infty)}(r\bfu) r^{-2}d r \sigma_\bfZ(d\bfu)\\
&=& c \int_{\mathbb S_{\|\cdot\|}^+} \int_0^\infty 1_{(s,\infty)}(r^{\xi} h(\bfu) ) r^{-2}d r \sigma_\bfZ(d\bfu)\\
&=& c \int_{\mathbb S_{\|\cdot\|}^+} \int_0^\infty 1_{((s/h(\bfu))^{1/\xi}, \infty)} (r) r^{-2}dr \sigma_\bfZ(d\bfu)\\
&=& c \int_{\mathbb S_{\|\cdot\|}^+} (s/h(\bfu))^{-1/\xi} \sigma_\bfZ(d\bfu).
\end{eqnarray*}
The last expression equals $c \rho(h) s^{-1/\xi}$, where $\rho(h)$ is given in \eqref{e:rho-h}.
\end{proof}

\begin{rem} By using Lemma 2.3 of \cite{embrechts:lambrigger:wuthrich:2009} and our Lemma \ref{l:rho}, one can establish
the asymptotic value-at-risk for more complicated instruments, which are non-linear homogeneous 
functions of the underlying assets. For example, one can consider $h(\bfu):= \min_{i=1,\ldots,d} u_i^{\xi}$.  Thus, $h(\bfZ) = \min_{i=1,\ldots,d} X_i$ 
represents the minimum loss of a portfolio and bounds on its extreme VaR may be of interest.  Note that in this case
$$
 \lim_{x\to\infty} \frac{\P(\min_{i=1,\ldots,d} X_i >x) } {\P(X_1>x)} = \frac{1}{\sigma_1} \int_{S_{\|\cdot\|}^+} {\Big(} \min_{i=1,\ldots,d} u_i {\Big)} \sigma_\bfZ(d\bfu)
$$
does not depend on $\xi$.
\end{rem}

\subsection{On the role of the tail index in risk diversification} \label{sec:xi-trichotomy}

Here, we briefly comment on an intriguing {\em phase transition} in the Fr\'echet-type bounds for
the coefficient $\rho_{\bfw}$ in \eqref{eq:universal_rhobds_le1} occurring in the case when $\xi>1$.
Recall that extreme VaR equals $\rho_{\bfw}^\xi$, where $1/\xi$ is the tail exponent of the portfolio $\mathbf X$.  

The case $0<\xi<1$ corresponds to a finite-mean model for the losses.  In 
the case $\xi>1$, we have an infinite mean model, which may be viewed as
`catastrophic' since one has to have infinite capital in order to guard against such losses in the long-run.  
The bounds on $\rho_{\bfw}$ can be interpreted as follows:

\begin{itemize}
 \item In the light-tailed case $0<\xi<1$ the means of
the losses are {\em finite} and then the lower bound $\rho_{\bfw} = \sum_{i=1}^d w_i$ is achieved by the 
{\em asymptotically independent} portfolio.  This agrees with the general intuition that 
accumulating independent assets leads to {\em diversification}
and lower risk.  On the other hand, the worst case scenario, naturally, corresponds to perfect 
(asymptotic) dependence where all assets are asymptotically identical or no diversification at all.

\item In the boundary case $\xi=1$, the two bounds coincide, regardless of the asymptotic 
portfolio dependence.

\item In the extreme heavy-tailed setting $\xi>1$ the means of the losses are infinite and it turns out that 
the bounds in \eqref{eq:universal_rhobds_le1} are reversed.  Indeed, by the triangle inequality,
for the $L^{\xi}-$norm, we obtain:
\begin{align*}
\rho_{\bfw}  &= \int_{\mathbb S_+} \Big(w_1 u_1^\xi +\cdots+w_d u_d^\xi\Big)^{1/\xi} H(d\bfu) 
 \le \sum_{i=1}^d w_i^{1/\xi}  \int_{\mathbb S_+} u_i H(d\bfu)  =   \sum_{i=1}^d w_i^{1/\xi},
\end{align*}
where in the last relation we used the moment constraints in \eqref{eq:marginal_constraints}.
Thus, the expression for the lower bound in the case $0<\xi<1$ in \eqref{eq:universal_rhobds_le1} now, in the case $\xi>1$,
becomes the upper bound.

On the other hand, by the Jensen's inequality, for the concave function $x\mapsto x^{1/\xi}$, we have
$$
\left (w_1 u_1^\xi +\cdots+w_d u_d^\xi \right)^{1/\xi} \ge \Big(\sum_{i=1}^d w_i \Big)^{1/\xi}  \sum_{i=1}^d \widetilde w_i u_i,
$$
where $\widetilde w_i:= w_i/(\sum_{j=1}^d w_j)$, so that $\sum_{i=1}^d \widetilde w_i = 1$.  By integrating the last bound
with respect to $H(d\bfu)$, and using the moment constraints \eqref{eq:marginal_constraints}, we obtain
$$
\rho_{\bfw} \equiv \int_{\mathbb S_+} \left(w_1 u_1^\xi +\cdots+w_d u_d^\xi\right)^{1/\xi} H(d\bfu) \ge \Big(\sum_{i=1}^d w_i\Big)^{1/\xi}.
$$
This shows that the expression for the upper bound in \eqref{eq:universal_rhobds_le1} (for the case $0<\xi<1$) 
now (in the case $\xi>1$) yields the lower bound.

In summary, for the case $\xi>1$, we obtain the following universal bounds on $\rho_{\bfw}$ (see also \eqref{eq:universal_rhobds_le1})
$$
 \Big(\sum_{i=1}^d w_i\Big)^{1/\xi} \le \rho_{\bfw} \le  \sum_{i=1}^d w_i^{1/\xi}.
$$
The bounds are sharp.  The upper bound corresponds to asymptotic independence, and the lower to complete (asymptotic) dependence.
This contradicts with our intuition about diversification.  It shows that in the infinite-mean scenario, of potentially catastrophic
losses, it is best to just hold a single asset rather than to `diversify' among independent ones.
The following argument provides some explanation of this counter-intuitive phenomenon.

Let $X_i,\ i=1,2,\ldots,$ be non-negative independent and identically distributed random variables 
modeling losses. Suppose that $\P(X_i>x) \sim c x^{-1/\xi},\ x\to\infty,\ c>0$, with $\xi>1$ so that we are
in the extreme heavy tailed regime of infinite expected loss $\mathbb E (X_i) = \infty$.  
Suppose that unit investment is distributed evenly among $n$ such potentially catastrophic assets resulting in a portfolio loss 
$$S_n:= \frac{1}{n} \sum_{i=1}^n X_i.$$
Then, by the heavy-tailed version of the central limit theorem, we have
$$
\frac{1}{n^\xi} \sum_{i=1}^n X_i = \frac{S_n}{n^{\xi-1}} \stackrel{d}{\longrightarrow} Z,\ \ \mbox{ as }n\to\infty,
$$
where $Z$ is a non-trivial totally skewed, $(1/\xi)$-stable random variable \cite{samorodnitsky:taqqu:1994book}.
In this case, since $(\xi-1)>0$, the total loss $S_n \stackrel{d}{\approx} n^{\xi-1} Z$ stochastically grows to infinity
as the number of independent assets in the portfolio increases.  This counter-intuitive phenomenon where distributing an
investment among multiple independent assets is in fact detrimental is due the extreme heavy-tailed nature of 
the model.  Although such catastrophic models may not be practically relevant, the above argument shows that during regimes
of very extreme losses our intuition about diversification may fail.
\end{itemize}

\section{\label{sec:Proofs}Proofs}

\subsection{\label{subsec:KKT}Karush\textendash Kuhn\textendash Tucker Conditions}

The following proposition establishes sufficient conditions for optimal solutions to an LSIP $(P)$. 
This version of the classic Karush--Kuhn--Tucker
(KKT) optimality conditions  for the case of LSIPs will be used in the proofs for Theorems 
\ref{thm:sol_L_linprog}, \ref{thm:dvariate-lower} and \ref{thm:d-variate-upper}.

\begin{prop}[KKT conditions]
\label{prop:KKT_prime} Suppose Assumption \ref{as:main} is satisfied
and $\mathrm{val}(P)<\infty$. Fix $\boldsymbol{x}\in\mathbb{R}^{p}$.
If there exists dual variables $(y_1, y_2, \ldots , y_p)^\top\in\mathbb{R}_{+}^{p}$ and $\{t_{1},\ldots,t_{p}\}\subset T$
such that 
\begin{equation}
\sum_{k=1}^{p}y_{k}\boldsymbol{a}(t_{k})=\boldsymbol{c},\label{eq:KKT_dual_feasible-1}
\end{equation}
\begin{equation}
\boldsymbol{a}(t_{k})^{\top}\boldsymbol{x}=b(t_{k}),\ k=1,\ldots,p,\label{eq:KKT_comp_slack-1}
\end{equation}
and 
\begin{equation}
\boldsymbol{a}(t)^{\top}\boldsymbol{x}\ge b(t),\ \text{for all }t\in T.\label{eq:KKT_prime_feasible-1}
\end{equation}
Then $\boldsymbol{x}\in\mathrm{sol}(P)$. \end{prop}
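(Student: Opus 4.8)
The plan is to certify $\boldsymbol{x}$ as a primal optimizer by building an explicit dual-feasible measure whose dual objective equals $\boldsymbol{c}^\top\boldsymbol{x}$, and then squeezing with weak duality \eqref{e:dual-bound}. This is the standard certificate pattern: the three hypotheses \eqref{eq:KKT_dual_feasible-1}, \eqref{eq:KKT_comp_slack-1}, \eqref{eq:KKT_prime_feasible-1} are precisely dual feasibility, complementary slackness, and primal feasibility, and the whole argument amounts to checking that these close the weak-duality gap. First I would record that \eqref{eq:KKT_prime_feasible-1} says exactly $b(t)-\boldsymbol{a}(t)^\top\boldsymbol{x}\le0$ for all $t\in T$, so $\boldsymbol{x}$ lies in the feasible set of $(P)$ and therefore $\mathrm{val}(P)\le\boldsymbol{c}^\top\boldsymbol{x}$.

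Next I would introduce the discrete candidate dual measure $\omega:=\sum_{k=1}^{p}y_k\,\delta_{t_k}$, where $\delta_{t_k}$ is the Dirac mass at $t_k\in T$. Since each $y_k\ge0$, this is a finite nonnegative Borel measure on $T$, hence $\omega\in\Omega$, and \eqref{eq:KKT_dual_feasible-1} gives $\int_T\boldsymbol{a}(t)\,\omega(dt)=\sum_{k=1}^{p}y_k\boldsymbol{a}(t_k)=\boldsymbol{c}$, so $\omega$ is dual feasible. For a dual-feasible $\omega$ the Lagrangian \eqref{eq:lagrangian} loses its dependence on $\boldsymbol{x}$, so $g(\omega)=\int_T b(t)\,\omega(dt)=\sum_{k=1}^{p}y_k\,b(t_k)$. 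The crux is then a one-line computation: substituting $b(t_k)=\boldsymbol{a}(t_k)^\top\boldsymbol{x}$ from \eqref{eq:KKT_comp_slack-1} and invoking \eqref{eq:KKT_dual_feasible-1} once more,
$$
g(\omega)=\sum_{k=1}^{p}y_k\,\boldsymbol{a}(t_k)^\top\boldsymbol{x}=\Big(\sum_{k=1}^{p}y_k\,\boldsymbol{a}(t_k)\Big)^{\!\top}\boldsymbol{x}=\boldsymbol{c}^\top\boldsymbol{x}.
$$
Weak duality \eqref{e:dual-bound} (equivalently the bound \eqref{eq:dual_lwrbound}) now yields $g(\omega)\le\mathrm{val}(D)\le\mathrm{val}(P)$, and combining with the feasibility bound $\mathrm{val}(P)\le\boldsymbol{c}^\top\boldsymbol{x}=g(\omega)$ forces every inequality in the chain to be an equality. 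Hence $\boldsymbol{c}^\top\boldsymbol{x}=\mathrm{val}(P)$, i.e.\ $\boldsymbol{x}\in\mathrm{sol}(P)$.

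I do not expect a genuine obstacle here: the result is a pure weak-duality argument and, unlike Corollary \ref{cor:lsip_sol_p_atoms}, requires neither strong duality nor Slater's condition. Assumption \ref{as:main} enters only through the measure-theoretic duality set up in Section \ref{sec:LSIP} (so that $g(\omega)$ is well defined and \eqref{e:dual-bound} applies). The single point I would state with care is that $\omega$ is admissible and dual feasible, which is exactly where the nonnegativity $y_k\ge0$ and the stationarity relation \eqref{eq:KKT_dual_feasible-1} are used; everything else is bookkeeping.
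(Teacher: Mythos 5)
Your proof is correct, but it takes a different route from the paper's. The paper proves this proposition by citing Theorem 7.1(ii) of \cite{goberna:lopez:1998}: a primal feasible point $\tilde{\boldsymbol{x}}$ is optimal for $(P)$ whenever $\boldsymbol{c}\in\mathrm{cone}\{\boldsymbol{a}(t):t\in T(\tilde{\boldsymbol{x}})\}$, where $T(\tilde{\boldsymbol{x}})$ is the active index set; it then observes that \eqref{eq:KKT_comp_slack-1} places $t_1,\dots,t_p$ in $T(\boldsymbol{x})$ and \eqref{eq:KKT_dual_feasible-1} exhibits $\boldsymbol{c}$ as a nonnegative combination of the corresponding $\boldsymbol{a}(t_k)$, so the cone condition \eqref{eq:active_cone} holds. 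You instead give a self-contained weak-duality certificate: the discrete measure $\omega=\sum_k y_k\delta_{t_k}$ is dual feasible by \eqref{eq:KKT_dual_feasible-1}, its dual value collapses to $\boldsymbol{c}^\top\boldsymbol{x}$ via complementary slackness \eqref{eq:KKT_comp_slack-1}, and primal feasibility \eqref{eq:KKT_prime_feasible-1} closes the chain $\boldsymbol{c}^\top\boldsymbol{x}=g(\omega)\le\mathrm{val}(D)\le\mathrm{val}(P)\le\boldsymbol{c}^\top\boldsymbol{x}$ using only \eqref{eq:dual_lwrbound} and \eqref{e:dual-bound}, which the paper has already derived. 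The two arguments are mathematically cognate (the cited cone criterion is itself a duality statement), but yours buys self-containedness and makes explicit that neither Slater's condition nor strong duality is needed, and as a by-product it shows the discrete $\omega$ is dual optimal with $\mathrm{val}(D)=\mathrm{val}(P)$; the paper's version buys brevity at the cost of an external reference. You are also right that the hypothesis $\mathrm{val}(P)<\infty$ is not really load-bearing in your argument, since feasibility of $\boldsymbol{x}$ already forces $\mathrm{val}(P)\le\boldsymbol{c}^\top\boldsymbol{x}<\infty$, and that Assumption \ref{as:main} enters only to make the Lagrangian \eqref{eq:lagrangian} and $g(\omega)$ well defined.
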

\begin{proof}
 For every $\boldsymbol{x}\in\mathbb{R}^{p}$, define the set
of \emph{active indices} $T(\boldsymbol{x}):=\{t\in T:\boldsymbol{a}(t)^{\top}\boldsymbol{x}=b(t)\}.$
By Theorem 7.1.(ii) of \cite{goberna:lopez:1998} (see also Section 11.2
therein), a primal feasible vector $\tilde{\boldsymbol{x}}\in\mathbb{R}^{p}$
is optimal for $(P)$ whenever
\begin{equation}
\boldsymbol{c}\in\mathrm{cone}\left\{ \boldsymbol{a}(t):t\in T(\tilde{\boldsymbol{x}})\right\} ,\label{eq:active_cone}
\end{equation}
where $\mathrm{cone}\{C\}$ denotes the smallest convex cone containing
$C\subset\mathbb{R}^{p}$.  This is true in our setting. Indeed, Relation  \eqref{eq:KKT_comp_slack-1} 
implies that $\{t_1,\dots,t_p\}\subset T(\boldsymbol{x})$, which in view of \eqref{eq:KKT_dual_feasible-1} entails 
\eqref{eq:active_cone}.
\end{proof}

\subsection{Proof for the Tawn-Molchanov minimizer} 

In this section, let $D = \{1,\ldots,d\}$. Denote $2^{D}$ as the power set of
$D$ and $K^{c}=D\backslash K.$ We shall need two auxiliary lemmas.
\begin{lem}
\label{lem:complete_alternation} Let $0\le u_{(1)}\le u_{(2)}\le\cdots\le u_{(d)}\le1$
be the order statistics for arbitrary $\boldsymbol{u}\in\mathbb{S}_{+}^{d-1}$Fix
$\xi>0$ and define $u_{(0)}=0$. The following equality holds 
\begin{equation}\label{e:lem:complete_alternation}
\sum_{J\in2^{D}\backslash\varnothing}\max_{j\in J}\{u_{j}\}\sum_{L\subset J}(-1)^{|L|+1}|J^{c}\cup L|^{1/\xi}=\sum_{j=1}^{d}(d+1-j)^{1/\xi}\left(u_{(j)}-u_{(j-1)}\right).
\end{equation}
 \end{lem}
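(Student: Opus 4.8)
The plan is to reduce the left-hand side to a single sum over the order statistics and then match coefficients with the (Abel-rearranged) right-hand side. First I would exploit the symmetry of the left-hand side: permuting the labels of $\boldsymbol u$ permutes the sets $J$ and leaves the sum invariant, while the right-hand side depends only on the order statistics, so without loss of generality I may assume $u_j = u_{(j)}$, i.e.\ the coordinates are already nondecreasing. Next, observe that for a fixed $J$ with $|J| = k$ and $L \subset J$ with $|L| = \ell$, the sets $J^c$ and $L$ are disjoint, so $|J^c \cup L| = (d-k)+\ell$; hence the inner sum depends on $J$ only through $k$, and equals
$$g(k) := \sum_{\ell=0}^{k}\binom{k}{\ell}(-1)^{\ell+1}(d-k+\ell)^{1/\xi}.$$
Grouping the outer sum by $|J| = k$ and counting, for ordered coordinates, the $k$-subsets whose largest index equals $i$ (there are $\binom{i-1}{k-1}$ of them, for $i \ge k$), gives $\sum_{|J|=k}\max_{j\in J}u_j = \sum_{i=k}^d \binom{i-1}{k-1}u_{(i)}$. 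Swapping the order of summation then yields
$$\text{LHS} = \sum_{i=1}^d u_{(i)}\,C_i, \qquad C_i := \sum_{k=1}^{i}\binom{i-1}{k-1}g(k).$$

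For the right-hand side I would apply summation by parts. Writing $c_j := (d+1-j)^{1/\xi}$ with the convention $c_{d+1} = 0$ (consistent with evaluating $(d+1-j)^{1/\xi}$ at $j = d+1$), and using $u_{(0)} = 0$, one obtains
$$\sum_{j=1}^d c_j\big(u_{(j)}-u_{(j-1)}\big) = \sum_{i=1}^d \big(c_i - c_{i+1}\big)u_{(i)} = \sum_{i=1}^d \Big[(d+1-i)^{1/\xi}-(d-i)^{1/\xi}\Big]u_{(i)}.$$
Comparing coefficients of $u_{(i)}$, the lemma reduces to the purely numerical identity $C_i = (d+1-i)^{1/\xi}-(d-i)^{1/\xi}$ for each $i = 1,\ldots,d$.

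The hard part will be this last identity, which I would handle by finite-difference (operator) calculus. Setting $f(m) := m^{1/\xi}$ and reindexing $\ell \mapsto k-\ell$ in $g(k)$ shows that $g(k) = (-1)^{k+1}\nabla^k f(d)$, where $\nabla = I - E^{-1}$ is the backward-difference operator ($E^{-1}f(m) = f(m-1)$), so that $\nabla^k f(d) = \sum_{\ell=0}^k \binom{k}{\ell}(-1)^\ell f(d-\ell)$. Substituting into $C_i$ and applying the binomial theorem to the operator $-\nabla$,
$$C_i = -\sum_{k=1}^i \binom{i-1}{k-1}(-\nabla)^k f(d) = \nabla\,(I-\nabla)^{i-1} f(d).$$
Since $I - \nabla = E^{-1}$, this collapses to $C_i = \nabla E^{-(i-1)}f(d) = f(d-i+1)-f(d-i)$, which is exactly the required value. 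I expect the only delicate points to be bookkeeping: justifying the reduction to ordered coordinates, the subset-counting step (which is exact, with no difficulty from ties, because it uses index order rather than values), and the boundary conventions $u_{(0)}=0$ and $c_{d+1}=0$. An alternative to the operator computation is a direct induction on $i$ using Pascal's rule, but the umbral manipulation above is the cleanest route.
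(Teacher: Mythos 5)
Your proof is correct, and its skeleton coincides with the paper's: both group the left-hand side by the cardinality $|J|=k$ and by the index $i$ at which the maximum is attained, arrive at the same coefficient $C_i=\sum_{k=1}^{i}\binom{i-1}{k-1}g(k)$ of $u_{(i)}$, and reduce the lemma to the purely numerical identity $C_i=(d+1-i)^{1/\xi}-(d-i)^{1/\xi}$. You differ in two places, both arguably to your advantage. First, the paper assumes the $u_j$ are distinct so that the condition $\max_{j\in J}u_j=u_{(i)}$ pins down which subsets are counted, and then recovers the general case by a continuity argument; you instead sort the coordinates by permutation symmetry and count subsets by their largest \emph{index}, which makes the counting exact in the presence of ties and dispenses with the limiting step. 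Second, the paper establishes the identity for $C_i$ by an explicit two-stage binomial manipulation (splitting $\binom{q+k}{k}$ via Pascal's rule and then invoking the expansions of $(1+(-1))^{i-q}$ and $(1+(-1))^{i-q-1}$, with careful tracking of the boundary terms $\mathbb{I}(0\le i-q\le 1)$), whereas your computation $C_i=\nabla(I-\nabla)^{i-1}f(d)=\nabla E^{-(i-1)}f(d)=f(d-i+1)-f(d-i)$ collapses the double sum in three lines; the operator route is cleaner and less error-prone, while the paper's is more elementary. You also make the Abel summation on the right-hand side explicit (with the convention $c_{d+1}=0$), a step the paper leaves implicit when it substitutes its identity back into the coefficient expansion.
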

\begin{proof}
We will prove \eqref{e:lem:complete_alternation}  under the assumption that there are no ties, i.e., $u_{(1)} < u_{(2)} \cdots < u_{(d)}$. Since 
both the left- and right-hand sides of \eqref{e:lem:complete_alternation}  are continuous functions of the $u_{i}$'s, the general result will follow by
continuity for all $\mathbf u \in \mathbb S_+^{d-1}$.

We have
\begin{align}
&\sum_{J\in2^{D}\backslash\varnothing}\max_{j\in J}\{u_{j}\}\sum_{L\subset J}(-1)^{|L|+1}|J^{c}\cup L|^{1/\xi}\nonumber \\
&\quad\quad\quad=\sum_{i=1}^{d}u_{(i)}\left\{ \sum_{J\in2^{D}\backslash\varnothing}\mathbb{I}\left(\max_{j\in J}\{u_{j}\}=u_{(i)}\right)\sum_{k=0}^{|J|}{|J| \choose k}(-1)^{k+1}(d-|J|+k)^{1/\xi}\right\} \nonumber\\
&\quad\quad\quad=\sum_{i=1}^{d}u_{(i)}\left\{ \sum_{\ell=1}^{i}\sum_{{J\in2^{D}\backslash\varnothing\atop |J|=\ell}}\mathbb{I}\left(\max_{j\in J}\{u_{j}\}=u_{(i)}\right)\sum_{k=0}^{\ell}
{\ell \choose k}(-1)^{k+1}(d-|J|+k)^{1/\xi}\right\} \nonumber\\
&\quad\quad\quad=\sum_{i=1}^{d}u_{(i)}\left\{ \sum_{\ell=1}^{i}{i-1 \choose \ell-1}\sum_{k=0}^{\ell}{\ell \choose k}(-1)^{k+1}(d-(\ell-k))^{1/\xi}\right\} .\label{eq:comp_alt_1}
\end{align}
The second relation above follows from the fact that due to lack of ties, only sets $J$ containing at most $i$ indices will contribute to the 
inner sum therein. The last relation follows by a simple counting argument since ${ i-1 \choose \ell -1}$ is the number of sets $J$ with $|J|=\ell \le i$, for which
$\max_{j\in J} u_j = u_{(i)}.$  Indeed, due to lack of ties, the latter equality holds only if the set $J$ contains the (unique!) index of $u_{(i)}$ and $(\ell-1)$ other indices among those 
of $u_{(1)},\ldots,u_{(i-1)}$.
 
Now fix $i\in D$ and consider 
\begin{multline}
\sum_{\ell=1}^{i}{i-1 \choose \ell-1}\sum_{k=0}^{\ell}{\ell \choose k}(-1)^{k+1}(d-(\ell-k))^{1/\xi}\\
=d^{1/\xi}\sum_{k=1}^{i}{i-1 \choose k-1}(-1)^{k+1}+\sum_{q=1}^{i}(d-q)^{1/\xi}\sum_{k=0}^{i-q}{i-1 \choose q+k-1}{q+k \choose k}(-1)^{k+1}. \label{eq:comp_alt_2_new}
\end{multline}
By using the fact that ${ q+k \choose k} = {q+k-1 \choose k} + {q+k-1 \choose k-1}$, where by convention ${q+k-1 \choose k-1} =0$ if $k=0$, we obtain
$$
 {i-1 \choose q+k-1} { q+k \choose k}   =  { i-1 \choose q-1} {i-q \choose k} + {i-1 \choose q} {i-q-1 \choose k-1}.
$$
Now, by using the Newton's binomial expansion of $(1+(-1))^{i-q}$ and $(1+(-1))^{i-q-1}$, for the inner sum in the right-hand side of \eqref{eq:comp_alt_2_new},
we obtain that
\begin{eqnarray*}
\sum_{k=0}^{i-q} {i-1 \choose q + k-1} {q+k \choose k} (-1)^{k+1} & = & {i-1 \choose q-1} \sum_{k=0}^{i-q} {i-q \choose k} (-1)^{k+1} + {i-1 \choose q} \sum_{k=1}^{i-q} 
{i-q \choose k-1} (-1)^{k+1}\\
&=& (-1) \mathbb I(i-q=0) + \mathbb I(i-q=1) = (-1)^{i-q+1}\mathbb I(0\le i-q \le 1).
\end{eqnarray*}
By substituting in \eqref{eq:comp_alt_2_new}, we finally obtain
\begin{eqnarray}
\sum_{\ell=1}^{i}{i-1 \choose \ell-1}\sum_{k=0}^{\ell}{\ell \choose k}(-1)^{k+1}(d-(\ell-k))^{1/\xi}     &=& 
d^{1/\xi}\mathbb{I}\left(i=1\right)+\sum_{q=1}^{i}(d-q)^{1/\xi}(-1)^{i-q+1}\mathbb{I}\left(i-q\le1\right)\nonumber\\
&=&(d+1-i)^{1/\xi}-(d-i)^{1/\xi}.\label{eq:comp_alt_2}
\end{eqnarray}
Substituting \eqref{eq:comp_alt_2} into \eqref{eq:comp_alt_1} gives \eqref{e:lem:complete_alternation}, which completes the proof.
\end{proof}
The next lemma establishes analytical solutions to the dual of problem (${\cal L}_\rho$) in \eqref{eq:TM} in 
the case where the set of constraints includes the entire set of extremal coefficients
$\boldsymbol{\vartheta}=(\vartheta_{J})_{J\in2^{D}\backslash\varnothing}\in\mathbb{R}_{+}^{2^{d}-1}$:
\begin{eqnarray*}
({\cal L}_{\rho}^{\prime}(\boldsymbol{\vartheta}))\quad\quad\quad\quad\inf_{\boldsymbol{x}\in\mathbb{R}^{p}} &  & -\boldsymbol{\vartheta}^{\top}\boldsymbol{x}\\
\text{subject to:} &  & -\left( \left(u_{1}^{\xi}+\cdots+u_{d}^{\xi}\right)^{1/\xi} - \sum_{J\in2^{D}\backslash\varnothing}\max_{j\in J}\{u_{j}\}x_{J} \right) \le 0,\ \boldsymbol{u}\in\mathbb{S}_{+}^{d-1}.
\end{eqnarray*}
Observe that the dual to the minimization problem (${\cal L}_\rho$) is a 
maximization problem.  For convenience, we encode it equivalently as a minimization of the negative objective.

\begin{lem}
\label{lem:Istar_opt} The vector $\tilde{\boldsymbol{x}}=(\tilde{x}_{J})_{J\in2^{D}\backslash\varnothing}$
with elements 
\begin{equation}\label{e:x-tilde_J}
\tilde{x}_{J}:=\sum_{L\subset J}(-1)^{|L|+1}|J^{c}\cup L|^{1/\xi}
\end{equation}
 is optimal for Problem $({\cal L}_\rho^{\prime}(\boldsymbol{\vartheta}))$ with
 \[
 \mathrm{val}({\cal L}_\rho^{\prime}(\boldsymbol{\vartheta})) = \sum_{K\in2^{D}\backslash\varnothing}|K|^{1/\xi}\beta_{K},
\]
 where $(\beta_{K})_{K\in2^{D}\backslash\varnothing}\in\mathbb{R}_{+}^{p}$
is the unique solution to 
\begin{equation}
\sum_{K\in2^{D}\backslash\varnothing}\mathbb{I}\left\{ \left(J\cap K\right)\not=\varnothing\right\} \beta_{K}=\vartheta_{J},\ J\in2^{D}\backslash\varnothing.\label{eq:beta_sol}
\end{equation}
 \end{lem}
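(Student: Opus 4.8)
The plan is to verify that the proposed vector $\tilde{\boldsymbol x}$ is optimal by checking the Karush--Kuhn--Tucker conditions of Proposition \ref{prop:KKT_prime}, using as active support points the ``Tawn--Molchanov'' atoms $\boldsymbol u^{(K)} := |K|^{-1}(\boldsymbol 1_{K}(j))_{j=1}^{d}$, one for each non-empty $K\subseteq D$, and as dual variables the scaled masses $y_K := |K|\beta_K$. Writing $\boldsymbol a(\boldsymbol u) = (\max_{j\in J}u_j)_{J}$ and $b(\boldsymbol u) = (u_1^\xi+\cdots+u_d^\xi)^{1/\xi}$, the three requirements become: (i) primal feasibility $\boldsymbol a(\boldsymbol u)^\top\tilde{\boldsymbol x}\le b(\boldsymbol u)$ for all $\boldsymbol u\in\mathbb{S}_{+}^{d-1}$; (ii) complementary slackness $\boldsymbol a(\boldsymbol u^{(K)})^\top\tilde{\boldsymbol x}=b(\boldsymbol u^{(K)})$; and (iii) dual feasibility $\sum_{K}y_K\,\boldsymbol a(\boldsymbol u^{(K)}) = \boldsymbol\vartheta$ with $y_K\ge 0$. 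Since there are exactly $p = 2^d-1$ atoms, this matches the format of Proposition \ref{prop:KKT_prime} (after the sign flip that turns the encoded minimization into a standard $(P)$, so that the feasibility of the problem constraint plays the role of \eqref{eq:KKT_prime_feasible-1}).

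First I would evaluate the constraint's left-hand side. By the very definition \eqref{e:x-tilde_J} of $\tilde x_J$, the quantity $\boldsymbol a(\boldsymbol u)^\top\tilde{\boldsymbol x} = \sum_{J}\max_{j\in J}\{u_j\}\,\tilde x_J$ is precisely the left-hand side of Lemma \ref{lem:complete_alternation}, so \eqref{e:lem:complete_alternation} gives
\[
\boldsymbol a(\boldsymbol u)^\top\tilde{\boldsymbol x} = \sum_{j=1}^d (d+1-j)^{1/\xi}\bigl(u_{(j)}-u_{(j-1)}\bigr),
\]
with ordered coordinates $u_{(1)}\le\cdots\le u_{(d)}$ and $u_{(0)}=0$. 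The crucial observation for (i) is that, since on $t\in(u_{(j-1)},u_{(j)})$ exactly $d+1-j$ coordinates exceed $t$, this equals $\int_0^\infty\bigl(\#\{i:u_i>t\}\bigr)^{1/\xi}\,dt = \bigl\|\sum_{i=1}^d\mathbb{I}\{u_i>\cdot\}\bigr\|_{L^{1/\xi}}^{1/\xi}$. Because $\xi\le 1$, the exponent $p:=1/\xi\ge 1$ and Minkowski's inequality applied to $f_i(t):=\mathbb{I}\{u_i>t\}$, using $\|f_i\|_p = u_i^{\xi}$, gives
\[
\boldsymbol a(\boldsymbol u)^\top\tilde{\boldsymbol x} = \Bigl\|\sum_i f_i\Bigr\|_p^{p}\le\Bigl(\sum_i\|f_i\|_p\Bigr)^{p}=\Bigl(\sum_{i=1}^d u_i^\xi\Bigr)^{1/\xi} = b(\boldsymbol u),
\]
which is exactly (i). Moreover Minkowski holds with equality iff the $f_i$ are aligned, i.e.\ iff all non-zero coordinates of $\boldsymbol u$ are equal; this is the case at each $\boldsymbol u^{(K)}$, and a direct check gives $b(\boldsymbol u^{(K)}) = |K|^{1/\xi-1}=\boldsymbol a(\boldsymbol u^{(K)})^\top\tilde{\boldsymbol x}$, establishing (ii).

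For dual feasibility (iii) I would use $\max_{j\in J}(\boldsymbol u^{(K)})_j = |K|^{-1}\mathbb{I}\{J\cap K\neq\varnothing\}$, so that $\sum_{K}y_K\,\boldsymbol a(\boldsymbol u^{(K)}) = \boldsymbol\vartheta$ reads $\sum_{K}y_K|K|^{-1}\mathbb{I}\{J\cap K\neq\varnothing\}=\vartheta_J$ for every $J$; with $y_K = |K|\beta_K$ this is verbatim the defining system \eqref{eq:beta_sol}. That this system has a unique solution, and that it is non-negative so that $y_K\ge 0$, is where the structure of valid extremal coefficients enters: the incidence matrix $(\mathbb{I}\{J\cap K\neq\varnothing\})_{J,K}$ is invertible and $\beta_K\ge 0$ is equivalent to the consistency relationships of Remark \ref{rem:consistent-thetas}, as established for Tawn--Molchanov models in \cite{schlather:tawn:2002,strokorb2015}. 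With (i)--(iii) verified, Proposition \ref{prop:KKT_prime} yields $\tilde{\boldsymbol x}\in\mathrm{sol}$, and the optimal value follows by combining (ii) and (iii):
\[
\boldsymbol\vartheta^\top\tilde{\boldsymbol x} = \sum_{K}y_K\,\boldsymbol a(\boldsymbol u^{(K)})^\top\tilde{\boldsymbol x} = \sum_{K}y_K\,b(\boldsymbol u^{(K)}) = \sum_{K}|K|\beta_K\cdot|K|^{1/\xi-1} = \sum_{K}|K|^{1/\xi}\beta_K,
\]
which is the asserted value. I expect the two genuinely substantive points to be the Minkowski argument for (i) together with its tightness exactly at the atoms $\boldsymbol u^{(K)}$, and the non-negativity of $\boldsymbol\beta$; everything else is bookkeeping once Lemma \ref{lem:complete_alternation} is in hand.
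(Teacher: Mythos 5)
Your proof is correct and follows the same architecture as the paper's: a KKT verification via Proposition \ref{prop:KKT_prime} at the Tawn--Molchanov atoms $\boldsymbol{u}_{K}=|K|^{-1}(\boldsymbol{1}_{K}(j))_{j=1}^{d}$ with dual weights $y_{K}=|K|\beta_{K}$, with Lemma \ref{lem:complete_alternation} supplying the identification of $\sum_{J}\max_{j\in J}\{u_{j}\}\tilde{x}_{J}$ with $\sum_{j=1}^{d}(d+1-j)^{1/\xi}(u_{(j)}-u_{(j-1)})$, and Theorem 4 of \cite{schlather:tawn:2002} supplying existence, uniqueness and non-negativity of $\boldsymbol{\beta}$. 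The one place you genuinely diverge is the key inequality for primal feasibility: the paper decomposes the ordered vector into its increments $f_{k}(j)=\mathbb{I}\{k\le j\}(u_{(k)}-u_{(k-1)})$ and invokes the \emph{reverse} Minkowski inequality for the $\ell^{\xi}$ quasi-norm with $0<\xi\le 1$ (inequality No.~198 of \cite{hardy:1934}), whereas you pass to the layer-cake representation $f_{i}(t)=\mathbb{I}\{u_{i}>t\}$ and apply the \emph{ordinary} Minkowski inequality in $L^{1/\xi}(dt)$ with $1/\xi\ge 1$. These are dual formulations of the same superadditivity phenomenon, and both yield exactly the bound $\sum_{J}\max_{j\in J}\{u_{j}\}\tilde{x}_{J}\le (u_{1}^{\xi}+\cdots+u_{d}^{\xi})^{1/\xi}$; yours has the advantage of needing only the standard triangle inequality and of making the equality case at the atoms transparent, which is also how you replace the paper's M\"obius-inversion verification of complementary slackness by a direct evaluation of both sides at $\boldsymbol{u}_{K}$ (both equal $|K|^{1/\xi-1}$). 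Your closing computation of the optimal value, $\boldsymbol{\vartheta}^{\top}\tilde{\boldsymbol{x}}=\sum_{K}y_{K}\,b(\boldsymbol{u}_{K})=\sum_{K}|K|^{1/\xi}\beta_{K}$, chains dual feasibility and complementary slackness instead of the paper's direct M\"obius computation, and gives the same answer. No gaps.
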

\begin{proof}
Fix $p=2^{d}-1.$ We prove $\tilde{\boldsymbol{x}}\in\mathrm{sol}({\cal L}_\rho^{\prime}(\boldsymbol{\vartheta}))$
by verifying the KKT optimality conditions of Proposition \ref{prop:KKT_prime}.  
That is, we need to show there exists $(y_{K})_{K\in2^{D}\backslash\varnothing}\in\mathbb{R}_{+}^{p}$
and $\{\boldsymbol{u}_{K} = (u_{jK})_{j=1}^d,\ K\in2^{D}\backslash\varnothing\}\subset\mathbb{S}_{+}^{d-1}$
such that the following conditions hold:
\begin{itemize}
\item[]{\em Dual feasibility:}
\begin{equation}
\sum_{K\in2^{D}}\max_{j\in J}\{u_{jK}\}y_{K}=\vartheta_{J},\ J\in2^{D}\backslash\varnothing,\label{eq:KKT_dual_feasible}
\end{equation}
\item[]{\em Complementary slackness:}
\begin{equation}
\sum_{J\in2^{D}\backslash\varnothing}\max_{j\in J}\{u_{jK}\}\tilde x_{J}=\left(u_{1K}^{\xi}+u_{2K}^{\xi}+\cdots+u_{dK}^{\xi}\right)^{1/\xi},\ K\in2^{D}\backslash\varnothing,\label{eq:KKT_comp_slack}
\end{equation}
\item[]{\em Primal feasibility:}
\begin{equation}
\left(u_{1}^{\xi}+u_{2}^{\xi}+\cdots+u_{d}^{\xi}\right)^{1/\xi}\ge 
\sum_{J\in2^{D}\backslash\varnothing}\max_{j\in J}\{u_{j}\}\tilde x_{J},\ \text{for all }\boldsymbol{u}\in\mathbb{S}_{+}^{d-1}.\label{eq:KKT_prime_feasible}
\end{equation}
\end{itemize}
Theorem 4 of \cite{schlather:tawn:2002} asserts that for a consistent set of extremal
coefficients Relation \eqref{eq:beta_sol} holds for some non-negative $\beta_{K},\ \varnothing\not=K\subset D$.
Define $y_{K}:=|K|\beta_{K}$ and $\boldsymbol{u}_{K}:=|K|^{-1}(\boldsymbol{1}_{K}(i))_{i=1}^{d}\in\mathbb{S}_{+}^{d-1}.$ 
We will show that the KKT conditions \eqref{eq:KKT_dual_feasible}-\eqref{eq:KKT_prime_feasible} hold.  This will complete the proof.

\begin{itemize}

\item[]{\em Dual feasibility \eqref{eq:KKT_dual_feasible}:} We have
\begin{align*}
\sum_{K\in2^{D}\backslash\varnothing}\max_{j\in J}\{u_{jK}\}y_{K}&=\sum_{K\in2^{D}\backslash\varnothing}\max_{j\in J}\{|K|^{-1}\boldsymbol{1}_{K}(j)\}|K|\beta_{K}\\
&=\sum_{K\in2^{D}\backslash\varnothing}\mathbb{I}\left\{ \left(J\cap K\right)\not=\varnothing\right\} \beta_{K}=\vartheta_{J},
\end{align*}
where the last equality follows from \eqref{eq:beta_sol}.

\item[] {\em Complementary slackness \eqref{eq:KKT_comp_slack}:} With $\tilde x_{J}$ as in \eqref{e:x-tilde_J}, we have
\begin{align*}
\sum_{J\in2^{D}\backslash\varnothing}\max_{j\in J}\{u_{jK}\}\tilde{x}_{J}&=\sum_{J\in2^{D}\backslash\varnothing}\mathbb{I}\left\{ \left(J\cap K\right)\not=\varnothing\right\} |K|^{-1}\sum_{L\subset J}(-1)^{|L|+1}|J^{c}\cup L|^{1/\xi}\\
&=|K|^{-1}\sum_{J\in2^{D}\backslash\varnothing}\mathbb{I}\left\{ \left(J\cap K\right)\not=\varnothing\right\} \sum_{L\subset J}(-1)^{|L|+1}|J^{c}\cup L|^{1/\xi}\\
&=|K|^{-1}|K|^{1/\xi}=\left(u_{1K}^{\xi}+u_{2K}^{\xi}+\cdots+u_{dK}^{\xi}\right)^{1/\xi},\ K\in2^{D}\backslash\varnothing.
\end{align*}
The third equality above follows from the M\"obius inversion formula
(see Theorem 4 of \cite{schlather:tawn:2002}) and the last one from the definition of the $u_{jK}$'s. 

\item[] {\em Primal feasibility \eqref{eq:KKT_prime_feasible}:}
For $(u_{1},\ldots,u_{d})^{\top}\in\mathbb{S}_{+}^{d-1},$ define
$f_{k}(j)=\mathbb{I}\{k\le j\}\left(u_{(k)}-u_{(k-1)}\right)$ where
\[
0=u_{(0)}\le u_{(1)}\le\cdots\le u_{(d)},
\]
 are the order statistics of $(0,u_{1},\ldots,u_{d})$.  Observe that $u_{(j)} = \sum_{k=1}^d f_k(j)$. Hence,
 \begin{align}
\left(u_{1}^{\xi}+\cdots+u_{d}^{\xi}\right)^{1/\xi}&=\left\{ \sum_{j=1}^{d}\Big( f_{1}(j) + \cdots + f_{d}(j) \Big)^{\xi} \right\} ^{1/\xi}\nonumber\\
&\ge\left\{ \sum_{j=1}^{d}f_{1}^{\xi}(j)\right\} ^{1/\xi}+\cdots+\left\{ \sum_{j=1}^{d}f_{d}^{\xi}(j)\right\} ^{1/\xi} \nonumber\\
&=\sum_{j=1}^{d}(d+1-j)^{1/\xi}\left(u_{(j)}-u_{(j-1)}\right),\label{eq:reverse-minkowski_198}
\end{align}
where the last relation follows from the definition of the $f_{k}(j)$'s and the bound follows from the reverse Minkowski inequality valid 
in the case $0<\xi\le 1$ (see, e.g., inequality No.\ 198 of \cite{hardy:1934}). 

Now,  Lemma \ref{lem:complete_alternation} implies that the the right-hand side of \eqref{eq:reverse-minkowski_198} equals
$$
\sum_{J\in2^{D}\backslash\varnothing}\max_{j\in J}\{u_{j}\}\sum_{L\subset J}(-1)^{|L|+1}|J^{c}\cup L|^{1/\xi} = \sum_{J\in2^{D}\backslash\varnothing}\max_{j\in J}\{u_{j}\}\tilde{x}_{J}.
$$
which in view of \eqref{eq:reverse-minkowski_198}, implies \eqref{eq:KKT_prime_feasible}.
\end{itemize}

Hence, $\tilde{\boldsymbol{x}}\in\mathrm{sol}({\cal L}_\rho^\prime(\boldsymbol{\vartheta}))$
and 
\begin{align*}
\mathrm{val}({\cal L}_\rho^\prime(\boldsymbol{\vartheta})) & =\boldsymbol{\vartheta}^{\top}\tilde{\boldsymbol{x}}=\sum_{J\in2^{D}\backslash\varnothing}\tilde{x}_{J}\sum_{K\in2^{D}\backslash\varnothing}\mathbb{I}\left\{ \left(J\cap K\right)\not=\varnothing\right\} \beta_{K}\\
&=\sum_{J\in2^{D}\backslash\varnothing}\sum_{K\in2^{D}\backslash\varnothing}\mathbb{I}\left\{ \left(J\cap K\right)\not=\varnothing\right\} \beta_{K}\sum_{L\subset J}(-1)^{|L|+1}|J^{c}\cup L|^{1/\xi}\\
&=\sum_{K\in2^{D}\backslash\varnothing}|K|^{1/\xi}\beta_{K}.
\end{align*}
This completes the proof of Lemma \ref{lem:Istar_opt}.
\end{proof}

\begin{proof}[{\bf Proof of Theorem \ref{thm:sol_L_linprog}}]
 Let $\mathcal{H}_{\boldsymbol{c}}$ denote the space of finite Borel
measures on $\mathbb{S}_{+}^{d-1}$ satisfying 
\[
\left\{ \int_{\mathbb{S}_{+}^{d-1}}\max_{j\in J}\{u_{j}\}H(d\boldsymbol{u})=c_{J}\right\} _{J\in\mathcal{J}}.
\]
 Likewise, denote $\mathcal{H}_{\boldsymbol{\vartheta}}$ as the space
of finite Borel measures on $\mathbb{S}_{+}^{d-1}$ satisfying
\[
\left\{ \int_{\mathbb{S}_{+}^{d-1}}\max_{j\in J}\{u_{j}\}H(d\boldsymbol{u})=\vartheta_{J}\right\} _{J\in2^{D}\backslash\varnothing}.
\]
Hence, we may write Problem $({\cal L}_\rho)$ as
\begin{align}
\mathrm{val}({\cal L}_\rho)& = \inf_{H\in\mathcal{H}_{\boldsymbol{c}}}\int_{\mathbb{S}_{+}^{d-1}}\left(u_{1}^{\xi}+\cdots+u_{d}^{\xi}\right)^{1/\xi}H(d\boldsymbol{u}) \nonumber \\
&=\inf_{\boldsymbol{\vartheta}\in\varTheta_{\boldsymbol{c}}}\left\{ \inf_{H\in\mathcal{H}_{\boldsymbol{\vartheta}}}\int_{\mathbb{S}_{+}^{d-1}}\left(u_{1}^{\xi}+\cdots+u_{d}^{\xi}\right)^{1/\xi}H(d\boldsymbol{u})\right\} ,\label{eq:tm_proof_1}
\end{align}
where $\varTheta_{\boldsymbol{c}}=\{\boldsymbol{\vartheta}\in\varTheta:\vartheta_{J}=c_{J},\text{ for all }J\in\mathcal{J}\}.$
(Recall $\varTheta$ is the space of consistent extremal coefficients).
Now Lemma \ref{lem:Istar_opt} together with strong duality for $({\cal L}_\rho^{\prime}(\boldsymbol{\vartheta}))$
imply
\begin{align}
\mathrm{val}({\cal L}_\rho^{\prime}(\boldsymbol{\vartheta})) & =
\inf_{H\in\mathcal{H}_{\boldsymbol{\vartheta}}}\int_{\mathbb{S}_{+}^{d-1}}\left(u_{1}^{\xi}+\cdots+u_{d}^{\xi}\right)^{1/\xi}H(d\boldsymbol{u})
 \nonumber \\
&=\sum_{K\in2^{D}\backslash\varnothing}|K|^{1/\xi}\beta_{K}, \label{eq:tm_proof_2}
\end{align}
where $(\beta_{K})_{K\in2^{D}\backslash\varnothing}\in\mathbb{R}_{+}^{p}$, with $p:= 2^d -1$, is the unique solution to 
\[
\sum_{K\in2^{D}\backslash\varnothing}\mathbb{I}\left\{ \left(J\cap K\right)\not=\varnothing\right\} \beta_{K}=\vartheta_{J},\ J\in2^{D}\backslash\varnothing.
\]
(Uniqueness follows by  M\"obius inversion, see e.g.\ Theorem 4 of \cite{schlather:tawn:2002}.)
 Substituting \eqref{eq:tm_proof_2} into \eqref{eq:tm_proof_1} gives
\begin{eqnarray*}
\mathrm{val}({\cal L}_\rho) & = & \inf_{\boldsymbol{\beta}\in\mathbb{R}_{+}^{p}}\sum_{J\in 2^D\backslash\varnothing}|J|^{1/\xi}\beta_{J},\\
 &  & \mathrm{subject\ to}:\left\{ \sum_{K\in 2^D\backslash\varnothing}\mathbb{I}\left\{ (K\cap J)\not=\varnothing\right\} \beta_{K}=c_{J}\right\} _{J\in\mathcal{J}},
\end{eqnarray*}
which completes the proof of Theorem \ref{thm:sol_L_linprog}.
\end{proof}

\subsection{Proofs for the closed form solutions in Section \ref{sec:closed_form}}


 \label{sec:Proofs:closed_form}

\medskip
\begin{proof}[{\bf Proof of Theorem \ref{thm:dvariate-lower}}] Let $k \in \{1,\dots,d-1\}$ be such that
\begin{equation}\label{e:k}
\frac{d}{k+1}\le \vartheta < \frac{d}{k}.
\end{equation}
That is, $B_k = [d (k+1)^{-1},dk^{-1})$ is the (unique) set in \eqref{e:d-variate-lower}, such that $\vartheta\in B_k$.  One can then write
\begin{equation}\label{e:lambda}
\vartheta = \lambda \frac{d}{k} + (1-\lambda) \frac{d}{k+1}, \ \ \mbox{ where  }\lambda = \frac{ \vartheta d^{-1} - (k+1)^{-1} }{k^{-1} - (k+1)^{-1}}\in [0,1). 
\end{equation}

In view of Theorem \ref{thm:sol_L_linprog}, the lower bound ${\rm val}(\mathcal L_\rho)$ is the value of
a standard linear program \eqref{eq:TM}.  This linear program is the {\em dual} to the following {\em primal} 
linear program:
\begin{align*}
& \mathop{\sup}_{\mathbf x = (x_J,\ J\in {\cal J}) \in \mathbb R^{p} } \mathbf c^\top \mathbf x\\
& \mbox{ subject to:\  } |K|^{1/\xi} \ge \sum_{J \in {\cal J}} \mathbb{I}\{J\cap K\not=\varnothing\}  x_J, \ \ \mbox{ for all $K\in 2^D\setminus\emptyset$, }
\end{align*}
where $D: = \{1,\dots,d\}$,
$$
\mathbf c = (1,\cdots,1,\vartheta)^\top\in\mathbb R^{d+1}\ \ \mbox{ and } \ \ {\cal J} = \{ \{1\},\dots,\{d\}, \{1,\dots,d\}\}.
$$
We will exhibit a primal feasible vector $\mathbf x = (x_J,\ J\in {\cal J})$ and a dual feasible vector $\boldsymbol{\beta} = (\beta_K,\ K \in 2^{D}\setminus\emptyset\}$, 
for which 
$$
\mathbf c^\top \mathbf x = \sum_{\emptyset \not= K\subset\{1,\dots,d\}} |K|^{1/\xi} \beta_K = L(\vartheta)
$$
with $L(\vartheta)$ as in  in \eqref{e:d-variate-lower}.  This, will complete the proof by the strong 
duality between the standard linear programs.

{\em Primal vector.}  For each $J\in\mathcal{J}$, let
\begin{equation}\label{e:xJ}
x_{J}=\begin{cases}
(k+1)^{1/\xi}-k^{1/\xi} &,\ \  |J|=1\\
(k+1) k^{1/\xi}-k(k+1)^{1/\xi} &,\ \  |J|=d,
\end{cases}
\end{equation}
where $k$ is as in \eqref{e:k}.

{\em Dual vector.} Now, define the components of the dual vector as:
\[
\beta_{K}=\begin{cases}
 \frac{\lambda d}{k} {d \choose k}^{-1} & |K|=k\\
\frac{(1-\lambda) d}{(k+1)}{d \choose k+1}^{-1} & |K|=k+1\\
0 & |K|\not\in\{k,k+1\},
\end{cases}
\]
where $\lambda$ is defined in \eqref{e:lambda}.

{\em Dual feasibility.} We will first verify that $\boldsymbol{\beta}$ is dual feasible.  We need to verify, that for all $J\in {\cal J}$,
\begin{equation}\label{e:dual-feasible-beta-d-variate-lower}
\sum_{K \subset D,\ K\not =\varnothing}\mathbb{I}\{K\cap J\not=\varnothing\}\beta_{K}=c_J =\begin{cases}
1 & |J|=1\\
\vartheta & |J|=d.
\end{cases}
\end{equation}
Indeed, when $|J|=d$ (i.e., $J=\{1,\dots,d\}$) we have
\begin{align*}
\sum_{K\subset D,\ K\not=\varnothing}\mathbb{I}\{K\cap J\not=\varnothing\}\beta_{K}&=\frac{\lambda d}{k} \sum_{{K\subset D\atop |K|=k}}{d \choose k}^{-1}
 +\frac{(1-\lambda)d}{(k+1)}\sum_{{K\subset D\atop |K|=k+1}} {d \choose k+1}^{-1}\\
&= \lambda \frac{d}{k} + (1-\lambda) \frac{d}{k+1} = \vartheta,
\end{align*}
in view of \eqref{e:lambda}.
Let now $|J|=1,$ that is, $J= \{j\}$, for some arbitrary fixed $j\in D$. Then,
\begin{align*}
\sum_{K\subset D, K\not=\varnothing}\mathbb{I}\{K\cap\{j\}\not=\varnothing\}\beta_{K} & =\frac{\lambda d}{k} \sum_{K\subset D\, :\, j \in K\atop |K| = k} {d \choose k}^{-1} +  
\frac{(1-\lambda) d}{k+1} \sum_{K\subset D\, :\, j \in K\atop |K| = k+1} {d \choose k+1}^{-1} \\
&= \lambda \frac{d}{k}{d \choose k}^{-1} {d-1 \choose k-1}  + (1-\lambda) \frac{d}{k+1}{d \choose k+1}^{-1} {d-1 \choose k} \\
& = \lambda + (1-\lambda) =1.
\end{align*}
This completes the proof of \eqref{e:dual-feasible-beta-d-variate-lower}, i.e., the dual feasibility of $\boldsymbol{\beta}$.

\medskip
{\em Primal feasibility.} For all $\emptyset \not = K \subset\{1,\dots,d\}$, we need to show 
\begin{equation*}
 |K|^{1/\xi} \ge \sum_{J\in\mathcal{J}}\mathbb{I}\{K\cap J\not=\varnothing\}x_{J} 
\end{equation*}
Since $\xi\in (0,1)$, the function $t\mapsto t^{1/\xi}$ is convex on $t\in (0,\infty)$ and
hence for any $s$ and $t_{1}\le t_{2}\in\mathbb{R}_{+}$ such that $s\not\in(t_{1},t_{2})$
it follows that 
\[
s^{1/\xi}\ge\frac{t_{2}^{1/\xi}-t_{1}^{1/\xi}}{t_{2}-t_{1}}(s-t_{1})+t_{1}^{1/\xi}.
\]
We shall apply this inequality with $t_1:= k,\ t_2 := k+1$ and $s:= |K| \not \in (k,k+1)$.
(Note that $|K|$ is an integer, and hence we always have $|K|\not\in(k,k+1)$.)  We have:
\begin{align*}
|K|^{1/\xi} &\ge\frac{(k+1)^{1/\xi}-k^{1/\xi}}{k+1-k}(|K|-k)+k^{1/\xi}\\
&=|K|\left[(k+1)^{1/\xi}-k^{1/\xi}\right]+k^{1/\xi}-k\left[(k+1)^{1/\xi}-k^{1/\xi}\right]\\
&=|K|\left[(k+1)^{1/\xi}-k^{1/\xi}\right]+\frac{k^{1/\xi-1}-(k+1)^{1/\xi-1}}{k^{-1}-(k+1)^{-1}}\\
&=\sum_{J\in\mathcal{J}}\mathbb{I}\{K\cap J\not=\varnothing\}x_{J},
\end{align*}
where the last equality follows from \eqref{e:xJ}, since there are precisely $|K|$ singleton sets $J \in {\cal J}$ with $K\cap J \not=\emptyset$.
This establishes the primal feasibility of $\mathbf x = (x_J, J\in \mathcal{J})$.

{\em Optimality.} Finally, we will verify that the objective functions of the primal and dual linear programs coincide.
In view of \eqref{e:lambda}, with straightforward manipulations, we obtain
\begin{align}\label{e:d-variate-primal-objective}
\mathbf c ^\top \mathbf x &= \sum_{J\in {\cal J}\atop |J|=1} \left[ (k+1)^{1/\xi} - k^{1/\xi}\right] + \vartheta  \left[(k+1) k^{1/\xi} - k (k+1)^{1/\xi}\right] \nonumber\\
& = d (k+1)^{1/\xi} - d k^{1/\xi} + \lambda d \left[ (k+1) k^{1/\xi-1} - (k+1)^{1/\xi} \right]  \nonumber\\
&  \quad\quad + (1-\lambda) d  \left[ k^{1/\xi} - dk (k+1)^{1/\xi-1} \right] \nonumber\\
& = d (k+1)^{1/\xi-1} + \lambda d \left[ k^{1/\xi-1} - (k+1)^{1/\xi -1} \right] \nonumber\\
& = d\left\{ \lambda k^{1/\xi-1}+(1-\lambda)(k+1)^{1/\xi-1}\right\} =L(\vartheta). 
\end{align}

Next, we consider the value of the dual objective.  We have,
\begin{align}\label{e:d-variate-dual-objective}
\sum_{K\subset D}|K|^{1/\xi}\beta_{K}
&=\sum_{K\subset D}|K|^{1/\xi}\beta_{K}\nonumber\\
&=d\lambda\sum_{{K\subset D\atop |K|=k}}k^{1/\xi-1}{d \choose k}^{-1}
+d(1-\lambda)\sum_{{K\subset D\atop |J|=k+1}}(k+1)^{1/\xi-1}{d \choose k+1}^{-1}\nonumber \\
&=d\left\{ \lambda k^{1/\xi-1}+(1-\lambda)(k+1)^{1/\xi-1}\right\} =L(\vartheta).
\end{align}
Relations \eqref{e:d-variate-primal-objective} and \eqref{e:d-variate-dual-objective} show that the values of the
primal and dual objectives are both equal to $L(\vartheta)$  in \eqref{e:d-variate-lower}.
This completes the proof of Theorem \ref{thm:dvariate-lower}.
\end{proof}

\begin{proof}[{\bf Proof of Theorem \ref{thm:d-variate-upper}}]

We need the following elementary result.

\begin{lem}\label{l:ua} Let $0<\xi<1$, $c>0$ and $u_c(\vartheta):= (\vartheta^\xi + c \cdot (d-\vartheta)^\xi)^{1/\xi}$.
Then, for all $\vartheta\in [0,d]$, we have:

{\bf (i)} $u_c(\vartheta) - \vartheta u_c'(\vartheta)\ge 0$

{\bf (ii)} $u_c''(\vartheta) \le 0$ 

{\bf (iii)} For all $z,z'\in [0,d]$, we have $u_c(z')  \le u_c'(z)  (z'-z) + u_c(z).$
\end{lem}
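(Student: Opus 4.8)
The plan is to handle the three claims in the order (ii), (iii), (i), because (iii) is an immediate consequence of the concavity asserted in (ii), whereas (i) is a self-contained algebraic identity. Throughout I would set $g(\vartheta):=\vartheta^\xi + c(d-\vartheta)^\xi$, so that $u_c=g^{1/\xi}$ with $g>0$ on $[0,d]$; then $u_c$ is continuous on $[0,d]$ and smooth on $(0,d)$, and since $\xi-1<0$ one has $u_c'(0^+)=+\infty$ and $u_c'(d^-)=-\infty$. Recording these endpoint facts at the outset is what lets me state (iii) cleanly on the closed interval.

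For (ii), the cleanest route is to recognize $u_c$ as a concave function composed with an affine map. Indeed, for $0<\xi<1$ the function $f(x,y):=(x^\xi+y^\xi)^{1/\xi}$ is positively homogeneous of degree one and superadditive on $\mathbb{R}_+^2$ by the reverse Minkowski inequality (inequality No.~198 of \cite{hardy:1934}); a positively homogeneous, superadditive function is concave, since $f(\lambda u+(1-\lambda)v)\ge f(\lambda u)+f((1-\lambda)v)=\lambda f(u)+(1-\lambda)f(v)$. Because $u_c(\vartheta)=f\big(\vartheta,\,c^{1/\xi}(d-\vartheta)\big)$ and $\vartheta\mapsto(\vartheta,\,c^{1/\xi}(d-\vartheta))$ is affine (with nonnegative entries on $[0,d]$), $u_c$ is concave, i.e.\ $u_c''\le0$ on $(0,d)$. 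A direct alternative is to compute $u_c''=\tfrac1\xi g^{1/\xi-2}\big[(\tfrac1\xi-1)(g')^2+g\,g''\big]$; writing $a=\vartheta$, $b=d-\vartheta$, the bracket collapses to $-\xi(1-\xi)c\,a^{\xi-2}b^{\xi-2}(a+b)^2=-\xi(1-\xi)c\,\vartheta^{\xi-2}(d-\vartheta)^{\xi-2}d^2\le0$ after using $a+b=d$.

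Claim (iii) is then simply the supporting-line (tangent) inequality for the concave $u_c$: for interior $z$ the graph lies below its tangent line, giving $u_c(z')\le u_c(z)+u_c'(z)(z'-z)$. The endpoint cases follow from the derivative conventions above: if $z=0$ then $z'-z\ge0$ and $u_c'(0)=+\infty$ makes the right-hand side $+\infty$ (equality when $z'=0$), and symmetrically at $z=d$ where $u_c'(d)=-\infty$ and $z'-z\le0$; one may also just pass to the limit $z\to0^+$ or $z\to d^-$ using continuity of $u_c$.

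Finally, (i) is a direct computation starting from $u_c'=g^{1/\xi-1}\big(\vartheta^{\xi-1}-c(d-\vartheta)^{\xi-1}\big)$:
\[
u_c(\vartheta)-\vartheta u_c'(\vartheta)=g^{1/\xi-1}\Big[g(\vartheta)-\vartheta^\xi+c\,\vartheta(d-\vartheta)^{\xi-1}\Big]=g^{1/\xi-1}\,c(d-\vartheta)^{\xi-1}\big[(d-\vartheta)+\vartheta\big]=d\,c\,(d-\vartheta)^{\xi-1}g(\vartheta)^{1/\xi-1},
\]
which is manifestly nonnegative on $[0,d)$ and equals $+\infty$ at $\vartheta=d$; at $\vartheta=0$ the product $\vartheta u_c'(\vartheta)\to0$ and the identity extends by continuity. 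The main obstacle is purely bookkeeping: the second-derivative factorization in (ii) and the endpoint differentiability discussion. Both are sidestepped by the reverse-Minkowski concavity argument, which I regard as the cleanest presentation and would use as the primary proof, relegating the explicit $u_c''$ formula to a remark.
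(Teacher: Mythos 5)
Your proposal is correct: the identity you derive for part (i), namely $u_c(\vartheta)-\vartheta u_c'(\vartheta)=c\,d\,(d-\vartheta)^{\xi-1}g(\vartheta)^{1/\xi-1}\ge 0$, and the factorization $u_c''=-(1-\xi)c\,d^2\,\vartheta^{\xi-2}(d-\vartheta)^{\xi-2}g^{1/\xi-2}\le 0$ both check out, and deducing (iii) as the supporting-line inequality for a concave function is exactly how the paper proceeds. The paper's own proof is a one-liner -- it asserts that (i) and (ii) follow by ``straightforward differentiation'' and that (ii) entails (iii) -- so your contribution is essentially to supply the omitted computations; in that sense the route is the same. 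The one genuine difference is your primary argument for (ii): recognizing $u_c(\vartheta)=f(\vartheta,\,c^{1/\xi}(d-\vartheta))$ with $f(x,y)=(x^\xi+y^\xi)^{1/\xi}$ positively homogeneous and superadditive (reverse Minkowski, inequality No.~198 of \cite{hardy:1934}), hence concave, hence concave after composition with an affine map. This avoids the second-derivative bookkeeping entirely and has the aesthetic advantage of reusing the same reverse Minkowski inequality that drives the primal-feasibility step in the proof of the Tawn--Molchanov minimizer (Lemma \ref{lem:Istar_opt}), at the cost of invoking the general fact that homogeneity plus superadditivity implies concavity. Your care with the endpoints ($u_c'(0^+)=+\infty$, $u_c'(d^-)=-\infty$) is more than the paper offers and is harmless; in the application only $z=\vartheta\in[1,d]$ arises, so the interior tangent inequality plus continuity suffices.
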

\begin{proof} Parts {\bf (i)} and {\bf (ii)} can be verified with straightforward differentiation.  Part {\bf (ii)} implies 
that the function $u_c$ is concave, which entails part {\bf (iii)}.
\end{proof}

Recall the primal-dual correspondence established in Theorem \ref{thm:LSIPexists} between the problems $({\cal U}_\rho)$ and 
$({\cal U}'_\rho)$. That is, problem $({\cal U}_\rho)$ is the {\em dual} of the LSIP problem $({\cal U}_\rho')$ in \eqref{e:U_rho-prime}.

We call problem $({\cal U}_\rho')$ `primal' and $({\cal U}_\rho)$ `dual'.  We will construct a {\em primal feasible} 
vector $\mathbf x \in \mathbb R^p$ and a {\em dual feasible} measure $H$, such that 
\begin{equation}\label{e:val-primal-equals-val-dual}
v:= \mathbf c^\top \mathbf x = \int_{\mathbb S_+} (u_1^\xi + \cdots + u_d^\xi)^{1/\xi} H(du),
\end{equation}
then $v = {\rm val}({\cal U}_\rho') = {\rm val}({\cal U}_\rho)$ will be the (common) optimal value of the two problems.  

Let $p=d+1$ and $D= \{1,\dots,d\}$ and define the measure $H(d\mathbf u) = \sum_{k=1}^d \delta_{\mathbf u_k}(d\mathbf u)$, where $\mathbf u_k = (u_{jk})_{j=1}^d$ are such that
$$
u_{kk} = \frac{\vartheta}{d}\ \ \mbox{ and }\ \ u_{jk} = \frac{d-\vartheta}{d(d-1)},\ j\in D \setminus\{k\}.
$$
Notice that $\mathbf u_k \in \mathbb S_+$ and also the measure $H$ is dual feasible.  Indeed,
$$
\int_{\mathbb S_+} u_j H(d\mathbf u) = \sum_{k=1}^d u_{jk}  = 1,
$$
which shows that the marginal extremal index constraints are met.
On the other hand, since 
$$
\frac{\vartheta}{d} \ge \frac{d-\vartheta}{d(d-1)},\ \ \mbox{ for all }1\le  \vartheta \le d,
$$
for each $k$, we have $\max_{j\in D} u_{jk} = \vartheta/d$.  This implies that
$$
\int_{\mathbb S_+} \max_{j\in D} u_j H(d\mathbf u) = \sum_{k=1}^d \frac{\vartheta}{d}  = \vartheta,
$$
and hence the $d$-variate extremal index constraint is satisfied.
We have thus shown that the measure $H$ is dual feasible, i.e., meets the constraints of $({\cal U}_\rho)$.

Let now $z\in [1,d]$ and consider the function 
$$
U( z ) := \max_{\mathbf u \in \mathbb S_+,\ d \max_{j\in D} u_j = z } (u_1^\xi + \cdots + u_d^\xi)^{1/\xi}.
$$
A straightforward calculation using Lagrange multipliers  yields that
$$
U(z) = \frac{1}{d} \left( z^\xi + (d-1)^{1-\xi} (d-z)^\xi \right)^{1/\xi},\ \ z\in [1,d].
$$

Observe that for all $\mathbf u_k$ in the support of $H$, we have
$$
(u_{1k}^\xi + \cdots + u_{dk}^\xi)^{1/\xi} = U(\vartheta).
$$
Therefore, the value of the dual problem at $H$ is:
\begin{equation}\label{e:dual-value}
\int_{\mathbb S_+} (u_{1}^\xi + \cdots + u_{d}^\xi)^{1/\xi} H(d\mathbf u) = \sum_{k=1}^d U(\vartheta)= d U(\vartheta) 
= (\vartheta^\xi + (d-1)^{1-\xi} (d-\vartheta)^{\xi})^{1/\xi}.
\end{equation}

Let us now deal with the primal problem.  Consider the vector $\mathbf x = (x_i)_{i=1}^p$, where 
\begin{align*}
x_1 = \cdots = x_{d}  =  U(\vartheta)-\vartheta U^{\prime}(\vartheta),\ \ \mbox{ and } \ \ 
x_{d+1}  =  dU^{\prime}(\vartheta).
\end{align*}
We will show that $\mathbf x$ is primal feasible.  That is, with $\mathbf a (\mathbf u) = (u_1,\cdots,u_d,\max_{j\in D} u_j)^\top $ 
and $b(\mathbf u) = (u_1^\xi + \cdots + u_d^\xi)^{1/\xi}$, we have
$$
b(\mathbf u) \le \mathbf a(\mathbf u)^\top \mathbf x,\ \ \mbox{ for all }\mathbf u\in \mathbb S_+.
$$
Observe that by the definition of the function $U$, we have
\begin{equation}\label{e:b-le-U}
b(\mathbf u) \le U( d \max_{j\in D} u_j ),\ \ \mbox{ for all } \mathbf u = (u_j)_{j=1}^d\in \mathbb S_+.
\end{equation}
Now, by applying Lemma \ref{l:ua}.{\bf (iii)}, to $ u_c(z) = U(z)$ with $c := (d-1)^{1-\xi}$,  $z:= \vartheta$ and 
$z':= d\max_{j=1,\dots,d} u_j$, we obtain that 
\begin{align}\label{e:b-bound-RHS}
U( d \max_{j=1,\dots,d} u_j ) &\le U'(\vartheta) \left[ d \max_{j\in D} u_j -\vartheta \right] + U(\vartheta) \nonumber \\
& = \sum_{j=1}^d u_j (U(\vartheta) - \vartheta U'(\vartheta)) + \max_{j\in D} u_j  dU'(\vartheta) \nonumber \\
& = \sum_{j=1}^d u_j x_j + \max_{j\in D} u_j x_{d+1} \equiv \mathbf a (\mathbf u)^\top \mathbf x.
\end{align}
Since the last inequality is true for all $\mathbf u\in \mathbb S_+$, Relations \eqref{e:b-le-U} and \eqref{e:b-bound-RHS},
imply the primal feasibility of the point $\mathbf x$.

Finally, we compute the value of the primal objective at $\mathbf x$:
\begin{align*}
\mathbf c^\top \mathbf x &= \sum_{j=1}^d 1\times x_j + \vartheta \times x_{d+1} \\
& = d \times (U(\vartheta)-\vartheta U^{\prime}(\vartheta)) + \vartheta \times dU^{\prime}(\vartheta) = d U(\vartheta),
\end{align*}
which in view of \eqref{e:dual-value} coincides with the evaluation of the dual problem objective at the measure $H$.
This completes the proof of Theorem \ref{thm:d-variate-upper}
\end{proof}

\bibliographystyle{plain}

\end{document}